\newcommand{\resp}{resp.\ }
\def\beq{\begin{equation}}
\def\eeq{\end{equation}}
\def\det{\mathrm{det}\ }
\newcommand{\Z}{{\mathbb Z}}
\newcommand{\R}{{\mathbb R}}
\newcommand{\T}{{\mathbb T}}
\newtheorem{theorem}{Theorem}[section]
\newtheorem{remark}[theorem]{Remark}
\newtheorem{lemma}[theorem]{Lemma}
\newtheorem{prop}[theorem]{Proposition}
\newtheorem{corollary}[theorem]{Corollary}
\newtheorem{theoalph}{Theorem}
\newtheorem{coralph}[theoalph]{Corollary}
\newcommand{\dom}{\mathcal D}
\newcommand{\obs}{\mathcal O}
\begin{document}

\subjclass[2010]{37D50.}

\title[Marked Length Spectrum and the geometry of open dispersing billiards]
{
  Marked Length Spectrum, homoclinic orbits and the geometry of open dispersing billiards}

\author[P\'eter B\'alint]{P\'eter B\'alint$^*$}
\thanks{$^*$P.B. is supported in part by Hungarian National Foundation for Scientific Research (NKFIH OTKA) grants K104745 and K123782.}
\address{$^*$MTA-BME Stochastics Research Group, Budapest University of Technology and Economics, Egry J\'ozsef u.\,1, H-1111 Budapest, Hungary, and Department of Stochastics, Institute of Mathematics, Budapest University of Technology and Economics, Egry J\'ozsef u.\,1, H-1111 Budapest, Hungary}
\email{pet@math.bme.hu}

\author[Jacopo De Simoi]{Jacopo De Simoi$^\dagger$}
\thanks{$^\dagger$J.D.S. and M.L. are supported by the NSERC Discovery grant, reference number 502617-2017.}
\address{$^{\dagger,\S}$Department of Mathematics, University of Toronto, 40 St George St., Toronto, ON, Canada M5S 2E4}
\email{jacopods@math.utoronto.ca}
\email{martin.leguil@utoronto.ca}

\author[Vadim Kaloshin]{Vadim Kaloshin$^\ddagger$}
\thanks{$^\ddagger$ V.K. acknowledges partial support of the
NSF grant DMS-1402164.}
\address{$^\ddagger$University of Maryland, College Park, MD, USA}
\email{vadim.kaloshin@gmail.com}

\author[Martin Leguil]{Martin Leguil$^\S$}

\begin{abstract}
  We consider billiards obtained by removing three strictly convex
  obstacles satisfying the non-eclipse condition on the plane.  The
  restriction of the dynamics to the set of non-escaping orbits is
  conjugated to a subshift on three symbols that provides a natural
  labeling of all periodic orbits.  We study the following inverse
  problem: does the Marked Length Spectrum (i.e., the set of lengths of
  periodic orbits together with their labeling), determine the geometry
  of the billiard table?  We show that from the Marked Length Spectrum
  it is possible to recover the curvature at periodic points of period
  two, as well as the Lyapunov exponent of each periodic orbit.
\end{abstract}

\maketitle

\tableofcontents

\section*{Introduction}

In this paper, we study the spectral rigidity of a class of chaotic billiards obtained by removing $m \geq 3$ strictly convex obstacles from the plane $\R^2$.\footnote{For brevity, in most of the exposition we restrict ourselves to the case $m=3$, yet, all of the results apply to arbitrary $m\geq 3$.}  We assume that the  boundary of each obstacle is at least of class $C^3$, and that the obstacles satisfy some non-eclipse condition. Similar billiard tables were already considered for instance in  \cite{GR}, where the authors studied the classical scattering of a point particle from three hard circular discs in a plane. In \cite{S}, Stoyanov considered billiard trajectories in the exterior of two strictly convex domains in the plane, and obtained an asymptotic for the sequence of travelling
times  involving the distance between the two domains and the curvatures at the ends of the associated period two orbit. Yet, our perspective is dual to those works, in the sense that we focus on periodic orbits (in particular, such orbits do not escape to infinity), while in their case,
 they studied the deviation of escaping trajectories due to the presence of the obstacles.

By the strict convexity of the obstacles, the class of billiard tables we consider has
some hyperbolic properties. Typically, hyperbolic systems have a lot of periodic orbits,
in connection with the famous Anosov closing lemma, and it is thus natural to expect that
the periodic data give a lot of information on the system. In this paper, we focus
on the information given by the length of all periodic orbits. More precisely, by
the chaoticity of the billiard and the non-eclipse condition we require, there is
an embedded subshift of finite type which provides a natural labeling of the periodic orbits.
Then, the \textit{Marked Length Spectrum}  is defined as the set of all lengths of periodic
orbits together with their marking. The question we want to address is the following:
\begin{center}
{\it How much geometric information on the billiard table does the Marked Length Spectrum convey?
In particular, are two such tables with the same Marked Length Spectrum necessarily isometric?
}
\end{center}
\medskip

\textbf{The problem of spectral rigidity.}
The problem of spectral rigidity has been investigated in various settings. Let us recall
that it is connected with the famous problem of M. Kac \cite{K}:
\textit{``Can you hear the shape of a drum?''}, i.e., whether the shape of a planar domain
is determined by the Laplace Spectrum. For instance, Andersson-Melrose \cite{AM},
generalizing previous results, showed that for strictly convex $C^\infty$ domains, there exists
a remarkable relation between the singular support of the wave trace and the Length Spectrum.
In particular, the Laplace Spectrum determines the Length Spectrum in this setting.  Similarly,
there is a connection between Laplace Spectrum and Length Spectrum in hyperbolic situations.
Indeed, the Selberg trace formula shows that the Laplace Spectrum determines the Length
Spectrum on hyperbolic manifolds, and for generic Riemannian metrics, the Laplace Spectrum
determines the Length Spectrum. \medskip

\textbf{The Marked Length Spectrum determines a hyperbolic surface up to isometry.}
For hyperbolic surfaces, there is a natural marking of periodic trajectories by the homology,
obtained as follows: to each homology class, we associate the length of the closed geodesic
in this class. The question of spectral rigidity for hyperbolic surfaces was addressed by
Otal \cite{O} and independently by Croke \cite{Cr}: they showed that if $g_0$ and $g_1$
are negatively curved metrics on a closed surface $S$ with the same Marked Length Spectrum,
then $g_1$ is isometric to $g_0$ (see \cite{CS} for the multidimensional case). More recently, Guillarmou-Lefeuvre \cite{GL} proved that in all dimensions, the Marked Length Spectrum of a Riemannian manifold
$(M, g)$ with Anosov geodesic flow and non-positive curvature locally determines the metric.
\medskip

\textbf{Spectral rigidity for (convex) real analytic domains with symmetries.}
Let us now mention a few results related to the question of spectral ridigity for
convex billiards. It has been famously proven by Zelditch  (see \cite{Z1,Z2,Z3}) that
the inverse spectral problem has a positive answer in the case of a generic
class of real analytic $\Z_2$-symmetric plane convex domains (i.e., symmetric with
respect to some reflection); in other terms, the Spectrum determines the geometry of such domains. Hezari-Zelditch \cite{HZ2}  have obtained a higher dimensional analogue of this result:  bounded analytic domains ­in $\R^n$ with
reflection symmetries across all coordinate axes, and with one axis height fixed (satisfying some generic non-degeneracy conditions) are spectrally determined among other such domains. Results of this kind are, currently, far beyond reach in the smooth category, although,
in the last decade, interesting results have started to appear for the, weaker, spectral
rigidity properties. In  \cite{HZ1}, Hezari-Zelditch have  shown the following result in the direction of  the question of spectral ridigity: given a domain   bounded by an ellipse, then any one-parameter
isospectral $C^\infty$ deformation which additionally preserves the
$\Z_2\times \Z_2$ symmetry group of the ellipse is necessarily
flat (i.e., all derivatives
have to vanish at the initial parameter).

On the other hand, Colin de Verdi\`ere has studied the   dynamical version of the inverse spectral problem for billiards with symmetries: in \cite{CdV},  he has shown that in the case of convex analytic billiards  which have the symmetries of an ellipse, the Marked Length Spectrum determines completely the geometry. In \cite{DSKW}, the authors have obtained the following result about the question of dynamical spectral rigidity: any sufficiently  smooth $\Z_2$-symmetric strictly convex domain sufficiently close to a circle is dynamically spectrally rigid, i.e., all deformations among domains in the same class which preserve the length of all periodic orbits of the associated billiard flow must necessarily be isometries.

\medskip

\textbf{Lyapunov exponents of periodic orbits and the Marked Length Spectrum.}
In another direction, Huang-Kaloshin-Sorrentino  \cite{HKS}  have proved   that for a generic strictly convex domain, it is possible to recover the eigendata corresponding to Aubry-Mather periodic orbits of the induced billiard map, from the (maximal) Marked Length Spectrum of the domain. Here, the marking is defined by associating any rational number $r$ in $[0,\frac 12]$ with the maximum among all the perimeters  of periodic orbits with rotation number $r$.
\medskip

\textbf{Flat billiards.}
Let us also mention that rigidity questions have also been explored in the context of flat billiards. In the recent paper \cite{DELS}, the authors define a Bounce Spectrum
for polygons, recording the symbolic dynamics of the billiard flow, in the same way as the symbolic coding we introduce in the present paper.
Then, they have shown that two simply connected Euclidean polygons with the same Bounce Spectrum  are either similar or right-angled and affinely equivalent. \\

\textbf{Dispersing billiards.}
A dual formulation of the inverse spectral problem is the inverse
resonance problem, in which one attempts to reconstruct an unbounded domain
(e.g. the complement of a finite number of convex scatterers) by the resonances (i.e. the
poles) of the resolvent $(\Delta-z^2)^{-1}$ (see e.g. \cite{PS2,Zw,Z4}). From the dynamical point of
view, these systems are described by the theory of \textit{Dispersing Billiards}, which is today
a very active topic in dynamical systems.

In this paper, we are exploring the inverse dynamical
problem in this setting. We show that for the class of billiards introduced above, it is possible to reconstruct some geometric information from the Marked Length Spectrum, namely the radius of curvature at the bouncing points of period two orbits. Contrary to some of the results recalled above, we do not assume any additional symmetries. In fact, we use asymmetric orbits between the obstacles to recover separately the curvatures.  In the same direction as in \cite{HKS}, we also show that for general periodic orbits, it is possible to adapt part of the procedure we introduce in the case of period two orbits in order to recover the Lyapunov exponent of each periodic orbit (see the paragraph before \eqref{maked lepnov} for the  definition of this notion). 
Yet, it seems that for general periodic orbits, we don't have enough asymmetric information between the points in the orbit to distinguish between them on a differentiable level, and recover the curvature at each point separately. We continue the study of higher order periodic orbits in a separate work \cite{BDSKL}, where we discuss that by an argument of Livsic type, the information on Lyapunov exponents obtained here are sufficient to recover the differential of the billiard map, up to H\"older conjugacy, and we show that additional geometric quantities can be reconstructed from the Marked Length Spectrum. On the other hand, in \cite{DSKL}, we investigate  the case of
billiards of the same type as above, but with analytic boundary, and we show that   if the billiard table has some symmetries, the square of the billiard map itself can be entirely reconstructed from the Marked Length Spectrum, as well as the geometry.


\section{Preliminaries}

\subsection{Symbolic coding and Marked Length Spectrum}

In this paper, we consider billiard tables $\dom\subset \R^2$ given by
$\dom = \R^{2}\setminus \bigcup^{3}_{i = 1}\obs_{i}$ where each
$\obs_{i}$ is a convex domain with boundary
$\partial\obs_{i} = \Gamma_i$, which we assume to be sufficiently smooth
(of class at least $C^3$).  We refer to each of the $\obs_{i}$'s as
\emph{obstacle} or \emph{scatterer}.  We let $\ell_i:=|\Gamma_i|$ be the
respective lengths, and we parametrize each $\Gamma_i$ in arc-length, for some 
smooth map $\gamma_i \colon \T_i:=\R/\ell_i\Z\to \R^2$, $s\mapsto \gamma_i(s)$. We
assume that the non-eclipse condition is satisfied, i.e., the convex
hull of any two scatterers is disjoint from the third one.

We denote the collision space by
\begin{align*}
  \mathcal{M} &= \bigcup_i \mathcal{M}_i, &
  \mathcal{M}_i &=\{(q,v),\ q \in \Gamma_i,\ v\in \R^2,\ \|v\|=1,\ \langle v,n\rangle\geq 0\},
\end{align*}
where $n$ is the unit normal
vector to $\Gamma_i$ pointing inside $\mathcal{D}$. For each
$x=(q,v) \in \mathcal{M}$, $q$ is associated with the arclength
parameter $s \in [0,\ell_i]$ for some $i\in \{1,2,3\}$, i.e.,
$q=\gamma_i(s)$, and we let $\varphi\in [-\frac{\pi}{2},\frac{\pi}{2}]$
be the oriented angle between $n$ and $v$. In other terms, each $\mathcal{M}_i$ can be seen as
a cylinder $\T_i \times [-\frac{\pi}{2},\frac{\pi}{2}]$ endowed with
coordinates $(s,\varphi)$. In the following, given a point $x=(q,v) \in \mathcal{M}$ associated with the pair $(s,\varphi)$, we also denote by
$\gamma(s):=q$ the point of the table defined as the projection of $x$ onto   the $q$-coordinate.
Set $\Omega :=\{(q,v) \in \mathcal{D} \times \mathbb{S}^1\}$. We denote by $\Phi^t\colon \Omega\to \Omega$ the flow of the billiard
and let
$$
\mathcal{F} \colon \mathcal{M} \to \mathcal{M},\quad x \mapsto \Phi^{\tau(x)+0}(x)
$$
be the associated billiard map, where
$\tau\colon\mathcal{M} \to \R_+\cup \{+\infty\}$ is the first return time.
For each pair $(s,\varphi), (s',\varphi') \in \mathcal{M}$, we denote by
\begin{equation}\label{def hsspremie}
h(s,s'):=\|\gamma(s)-\gamma(s')\|
\end{equation}
the length of the segment connecting the associated points of the table.

By the convexity of the obstacles, for each $i \in\{1,2,3\}$, for $* \in \{j,k\}$, with $\{i,j,k\}=\{1,2,3\}$,
there exist $0\leq a_i^*\leq b_i^*\leq \ell_i$, and for each parameter $s\in [a_i^*,b_i^*]$, there exists a non-empty closed interval $I_i^*(s) 
\subset [-\frac{\pi}{2},\frac{\pi}{2}]$ 
such that $\tau(x)<+\infty$, if
$x=(s,\varphi) \in
\widetilde{\mathcal{M}}_i:=\widetilde{\mathcal{M}}_i^j\cup\widetilde{\mathcal{M}}_i^k$,
and $\tau(x)=+\infty$, if
$x \in \mathcal{M}_i \backslash
\widetilde{\mathcal{M}}_i$, 
where  
$$
\widetilde{\mathcal{M}}_i^*
:=\{(s,\varphi)\in \mathcal{M}_i: s\in [a_i^*,b_i^*],\ \varphi \in I_i^*(s)\}=\mathcal{M}_i \cap \mathcal{F}^{-1} (\mathcal{M}_*).
$$
In particular, the set of trajectories that do not escape to infinity is given by
$$
\bigcap_{j \in \Z} \mathcal{F}^{-j}(\widetilde{\mathcal{M}}),\quad \widetilde{\mathcal{M}}:=\widetilde{\mathcal{M}}_1\cup \widetilde{\mathcal{M}}_2\cup \widetilde{\mathcal{M}}_3,
$$
and is homeomorphic to a Cantor set.\footnote{For more details about this fact, we refer the reader to \cite{M} or to Section III in \cite{GR}. In \cite{GR}, the authors consider the case where the obstacles are round discs, but the same construction can be carried out for strictly convex obstacles.} In restriction to this set, the dynamics  is conjugated to a subshift of finite type associated with the transition matrix $$\begin{pmatrix}
0 & 1 & 1\\
1 & 0 & 1 \\
1 & 1 & 0
\end{pmatrix}.$$ In other terms, any word $(\rho_j)_{j}\in \{1,2,3\}^\Z$ such that $\rho_{j+1}\neq \rho_j$ for all $j \in \Z$ can be realized by an orbit, and by expansiveness of the dynamics, this orbit is unique. Such a word is called \textit{admissible}. Besides, this marking is unique provided that we fix a starting point in the orbit and an orientation.

In particular, any periodic orbit of period $p \geq 2$ can be labelled by a finite admissible word $\sigma=(\sigma_1\sigma_2\dots\sigma_p)\in \{1,2,3\}^p$, i.e., such that the infinite word $\sigma^\infty:=\dots\sigma\sigma\sigma\dots$ is admissible, or equivalently, such that $\sigma_1 \neq \sigma_p$ and $\sigma_{j}\neq \sigma_{j+1}$, for all $j\in \{1,\dots,p-1\}$.
We also let  $\overline{\sigma}$ be the transposed word
$$
\overline{\sigma}:=(\sigma_p \sigma_{p-1}\dots\sigma_1).
$$
It still encodes the same periodic trajectory as $\sigma$, but traversed backwards.

As explained above, for any $j \in \{1,\dots,p\}$, the $j^{\text{th}}$
symbol $\sigma_j$ of $\sigma$ corresponds to a point $x(j)$ in the
trajectory, where $x(j)=(s(j),\varphi(j))$ is represented by a position
and an angle coordinates. For all $k \in \Z$, we also extend the
previous notation by setting $\sigma_k:=\sigma_{\overline{k}}$, with
$\overline{k}:=k\text{ (mod } p)$, and similarly for $x(k)$, $s(k)$ and
$\varphi(k)$.

The \textit{Marked Length Spectrum} of $\mathcal{D}$ is defined as the function
\begin{equation}\label{marked spectrum}
 \mathcal{L}\colon \sigma \mapsto \mathcal{L}(\sigma),\quad \sigma\in \cup_{p \geq 2} \{1,2,3\}^p \text{ finite admissible word},
\end{equation}
where $\mathcal{L}(\sigma)$ is the length of the periodic orbit
identified by $\sigma$, obtained by summing the lengths of all the line
segments that compose it.

For any periodic orbit $(x_1,\dots, x_p)$ encoded by a word $\sigma$ of length $p \geq 2$, we have $D_{x_j}\mathcal{F}^p\in \mathrm{SL}(2,\R)$, for $j \in \{1,\dots,p\}$.\footnote{Recall that for $x=(s,\varphi)\in \mathcal{M}$ and $x'=(s',\varphi'):=\mathcal{F}(s,\varphi)$, we have $\det D_{x}\mathcal{F}=\frac{\cos \varphi}{\cos \varphi'}$. Thus, for any periodic orbit $(x_1,x_2,\dots ,x_p)$ of period $p \geq 2$, we have $\det D_{x_j}\mathcal{F}^p=1$, for $j \in \{1,\dots,p\}$.} Due to the strict convexity of the obstacles, the matrix of the differential $D_{x_j}\mathcal{F}^p$ is hyperbolic, and we denote by $\mu(\sigma)>1$ (\resp
$\lambda(\sigma):=\mu(\sigma)^{-1}<1$) its largest, (\resp smallest)
eigenvalue. The
\textit{Lyapunov exponent} of this orbit is defined as
$\mathrm{LE}(\sigma):=\frac 1p\log \mu(\sigma)=-\frac 1p \log
\lambda(\sigma) >0$.
Similarly, the \textit{Marked Lyapunov Spectrum} of the billiard table $\mathcal{D}$ is defined as the function
\begin{equation}\label{maked lepnov}
\mathrm{LE}\colon \sigma \mapsto \mathrm{LE}(\sigma),\quad \sigma\in\cup_{p \geq 2} \{1,2,3\}^p \text{ finite admissible word}.
\end{equation}

\subsection{Basic facts on chaotic billiards}

Let us also recall a few useful facts about chaotic billiards; for more details, we refer to the book of Chernov-Markarian \cite{CM}.

We consider the billiard flow $\Phi^t \colon \Omega\to \Omega$. For any point $X=(q,v) \in \Omega$, we denote  by $\omega \in [0,2\pi)$ the counterclockwise angle between the positive horizontal axis and the velocity vector $v$.  The \textit{Jacobi coordinates} $(d\eta,d\xi)$  in the subspace $T_{(x,y)}\dom\subset T_X \Omega$  
are defined as
$$
d\eta=\cos \omega\, dx+\sin \omega\, dy,\qquad d\xi=-\sin\omega\, dx+\cos \omega\, dy.
$$
In other terms, $d\eta$ is the component  along the velocity vector $v$, and $d\xi$ is its  orthogonal component. Let us introduce, furthermore, the subspace
$T_X^\perp\Omega:=\{dX: \cos \omega\, dx+\sin \omega\, dy=0\}\subset T_X \Omega$ which is invariant for the flow in the sense that
$D_X\Phi^t (T_X^\perp\Omega)=T_{\Phi^t X}^\perp\Omega$. The slope of a tangent line $L \subset T_X^\perp\Omega$ is defined as $\mathcal{B}:=\frac{d\omega}{d\xi}$. A smooth curve in $\mathcal{D}$ equipped with a continuous family
of unit normal vectors is called a \textit{wave front}; it is  \textit{dispersing} if it has positive slope, i.e., $\mathcal{B}>0$.

In the following, we restrict ourselves to the collision space $\mathcal{M}$,
and we denote by  $\|\cdot\|_p$ the  $p$-\textit{metric} on tangent vectors $(ds,d\varphi)$:
\begin{equation}\label{deefini petri}
\|(ds,d\varphi)\|_p:=\cos \varphi| ds|.
\end{equation}
Loosely speaking, a smooth curve in the collision space is called a dispersing wave front if it is the projection on $\mathcal{M}$ of  a dispersing wave front as above.  More precisely, let $i \in \{1,2,3\}$, let $0<a<b<\ell_i$, and
let $\mathcal{W}=\{(s,\phi(s)): a \leq s\leq b\}$, 
for some $C^1$ function
$\phi \colon [a,b] \to [-\frac{\pi}{2},\frac{\pi}{2}]$. We define $\mathcal{B}^+$ as
$$
\mathcal{B}^+(s):=\frac{\phi'(s)+\mathcal{K}(s)}{\cos \phi(s)},\quad \forall\,  a \leq s\leq b,
$$
where $\mathcal{K}(s)$ is the curvature at the point with parameter $s$.  Then, we  say that $\mathcal{W}$ is a dispersing wave front if $\mathcal{B}^+(s)>0$, for all $a \leq s\leq b$.
We denote its length by
$|\mathcal{W}|:=\int_{\mathcal{W}} \|dx\|_p$. Then,  for all
$k \geq 0$,
$\mathcal{F}^k(\mathcal{W})$ is a finite collection of dispersing wave fronts,  and the sequence
$(|\mathcal{F}^k(\mathcal{W}) |)_{k \geq 0}$, which denotes the total length of these wave fronts, grows exponentially fast.

This follows from the following fact.
Fix $x \in \mathcal{M}$, and let $x(k)=(s(k),\varphi(k))$, $k \geq 0$, be its forward iterates. Given  any  vector $dx=(ds,d\varphi)$ in the tangent line $L\subset T_x \mathcal{M}$, then for any $k \geq 0$, we have  (see e.g. Chernov-Markarian \cite{CM}, p. 58)
\begin{equation}\label{expansion first}
\frac{\|D_x \mathcal{F}^k\cdot dx\|_p}{\|dx\|_p}=\prod_{j=0}^{k-1}|1+\ell_j \mathcal{B}_j^+|,
\end{equation}
where $\mathcal{B}_j^+$ denotes the slope of the postcolisional line $L_j^+$ corresponding to the line $L_j:=D_x \mathcal{F}^j(L)$ obtained after $j$ iterations, and $\ell_j:=h(s(j),s(j+1))$ denotes the length of the $j^\text{th}$ segment of the forward orbit of $x$ under $\mathcal{F}$. Moreover,
$\mathcal{B}_{j+1}^+=\frac{\mathcal{B}_j^+}{1+\ell_j\mathcal{B}_j^+} +\frac{2\mathcal{K}(s_{j+1})}{\cos\varphi_{j+1}}$, which ensures that $\mathcal{B}_j^+>c$ for some $c>0$ that depends only on the billiard configuration.\\

To conclude, let us also recall some important symmetry of the billiard dynamics, which will be very useful in the following. Let us denote by  $\mathcal{I}$ the map $\mathcal{I}\colon (s,\varphi) \mapsto (s,-\varphi)$. It conjugates the billiard map $\mathcal{F}$ with its inverse $\mathcal{F}^{-1}$, according to the time-reversal property of the billiard dynamics:
$$
\mathcal{I} \circ \mathcal{F} \circ \mathcal{I}=\mathcal{F}^{-1}.
$$
In the following, a  periodic orbit of period $p=2q \geq 2$ is called \textit{palindromic} if it can be labelled by an admissible word $\sigma \in \{1,2,3\}^p$ such that  $\sigma=(\sigma_1\dots\sigma_{q-1}\sigma_q\sigma_{q-1}\dots\sigma_1\sigma_0)$ for certain symbols $(\sigma_0,\sigma_1,\dots,\sigma_q)\in \{1,2,3\}^{q+1}$. As we shall see later, there is a connection between the palindromic symmetry and the time-reversal property recalled above. In particular, by the palindromic symmetry  and by expansiveness of the dynamics, the associated trajectory hits the billiard table perpendicularly at the    points with symbols $\sigma_0$ and $\sigma_q$.

\section{Statement of the results}

\subsection{The case of period two orbits}

In this part, we consider billiard tables $\mathcal{D} \subset \R^2$
as above, and we focus on periodic orbits of period two.  There are
three such orbits, realized as the minimizing segments between each
pair of obstacles.

\begin{theoalph}\label{prop tperiod twwo}
  Consider a period two orbit encoded by a word
  $\sigma=(\sigma_1\sigma_0)\in \{1,2,3\}^2, 
  \sigma_0\ne \sigma_1$.
  Let $\tau_1$  be such that $\{\tau_1,\sigma_0,\sigma_1\}=\{1,2,3\}$, and set
  $\tau:=(\tau_1\sigma_0)$. We denote by $R_0,R_1>0$ the respective
  radii of curvature at the points with symbols $\sigma_0,\sigma_1$, and let $\lambda=\lambda(\sigma)<1$ be the smallest
  eigenvalue of $D\mathcal{F}^2$ at the points of
  $\sigma$.

  Then, for $n\gg 1$, the following estimates hold:\footnote{As we will
    see more in detail later, the right hand side of each of the following
    estimates is negative, because periodic orbits are minimizers of the
    length functional.}
\begin{enumerate}
	\item if $n$ is even, then
	\begin{equation*}
		\mathcal{L}(\tau \sigma^{n})-(n+1) \mathcal{L}(\sigma)-\mathcal{L}^\infty
		= - C\cdot  \mathcal{Q}\left(\frac{2R_0}{\mathcal{L}(\sigma)},\frac{2R_1}{\mathcal{L}(\sigma)}\right) \lambda^{n}+O(\lambda^{\frac{3n}{2}});
	\end{equation*}
	\item if $n$ is odd, then
	\begin{equation*}
		 \mathcal{L}(\tau \sigma^{n})-(n+1) \mathcal{L}(\sigma)-\mathcal{L}^\infty
		= -C \cdot \mathcal{Q}\left(\frac{2R_1}{\mathcal{L}(\sigma)},\frac{2R_0}{\mathcal{L}(\sigma)}\right) \lambda^{n}+O(\lambda^{\frac{3n}{2}}),
	\end{equation*}
\end{enumerate}
  for some real number $\mathcal{L}^\infty=\mathcal{L}^\infty(\sigma,\tau) \in \R$,
  some constant $C=C(\sigma,\tau)>0$, and
  a quadratic form $\mathcal{Q}\colon \R\times \R\to \R$ of the form
  \begin{equation*}
    \mathcal{Q}(X,Y):=(1+\lambda^2) (1+X)^2 - (1+\lambda)^2 XY + 2\lambda(1+ Y)^2.
  \end{equation*}
  Finally, the  eigenvalue $\lambda<1$ is solution to the equation
  \begin{align*}
    4 \lambda \left(\frac{\mathcal{L}(\sigma)}{2R_0}+1\right)\left(\frac{\mathcal{L}(\sigma)}{2R_1}+1\right)=(1+\lambda)^2.
  \end{align*}
\end{theoalph}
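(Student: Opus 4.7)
The plan is to exploit two structural features of the word $\tau\sigma^n$: its palindromic symmetry and its closeness to the period-two orbit $\sigma$ for large $n$. First I would observe that $\tau\sigma^n$ is a palindrome (of period $2(n+1)$), so by the discussion preceding the statement, the associated orbit hits $\Gamma_{\tau_1}$ perpendicularly at position $0$ and hits $\Gamma_{\sigma_0}$ (when $n$ is even) or $\Gamma_{\sigma_1}$ (when $n$ is odd) perpendicularly at position $n+1$. Consequently, $\mathcal{L}(\tau\sigma^n)$ equals twice the length of the half-orbit joining these two perpendicular bounces, and the parity dependence in the statement is already transparent at this stage. Second, by the hyperbolicity of $\sigma$, I would invoke the stable-manifold theorem to produce the unique semi-infinite orbit $\gamma^+=(y_j^*)_{j\geq 0}$ with coding $\tau_1\sigma_0\sigma_1\sigma_0\ldots$: its bounces converge to $\sigma$ at rate $\sqrt{\lambda}$, so the regularized length $\mathcal{L}^\infty := 2\lim_{N\to\infty}\bigl(\sum_{j=0}^{N-1}h(s_j^*,s_{j+1}^*) - \tfrac{N}{2} \mathcal{L}(\sigma)\bigr)$ exists and will play the role of the constant in the statement.

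With these ingredients, I would compare the half-orbit of $\tau\sigma^n$ with a suitable truncation of $\gamma^+$. Both semi-orbits share the same itinerary and start on $\Gamma_{\tau_1}$, but the true half-orbit must turn around at a perpendicular bounce at position $n+1$, whereas $\gamma^+$ continues indefinitely along the stable manifold of $\sigma$. A matching argument in hyperbolic coordinates shows that the resulting deviation between the half-orbit and $\gamma^+$ at position $n+1$ is of size $\sim\lambda^{(n+1)/2}$. Plugging this deviation into the second-order Taylor expansion of the generating function $h$ at the perpendicular period-two bounce---the second derivatives there being expressible in terms of the curvatures $1/R_i$ and the chord length $\mathcal{L}(\sigma)/2$ via the wave-front propagation identities recalled in Section 1.2---produces a quadratic correction of size $\lambda^{n+1}$ in the half-orbit length, which doubles to the announced $\lambda^n$ term. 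The characteristic equation $4\lambda(1+\mathcal{L}(\sigma)/2R_0)(1+\mathcal{L}(\sigma)/2R_1) = (1+\lambda)^2$ is a direct computation of $\mathrm{tr}\,D\mathcal{F}^2$ at the period-two bounces using the product of the standard free-flight and normal-incidence reflection matrices, together with the identity $\mathrm{tr}\,D\mathcal{F}^2+2 = (1+\lambda)^2/\lambda$.

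The hard part will be the explicit identification of the quadratic form $\mathcal{Q}(X,Y) = (1+\lambda^2)(1+X)^2 - (1+\lambda)^2 XY + 2\lambda(1+Y)^2$ (with $X = 2R_0/\mathcal{L}(\sigma)$, $Y = 2R_1/\mathcal{L}(\sigma)$) together with the constant $C$: this demands a careful bookkeeping of (a) the propagation of deviations through the $n/2$ pairs of reflections between the two ends, which generates the factors $(1+X)^2$ and $(1+Y)^2$---each tied to the expansion on one of the two obstacles---and the overall powers of $\lambda$; (b) the matching between the unstable coordinate coming from the $\tau_1$ side and the perpendicular boundary condition at position $n+1$, which is responsible for the cross term $-(1+\lambda)^2 XY$; and (c) the swap $X\leftrightarrow Y$ between the even and odd cases, which reflects the parity of $n+1$ and hence which of $\Gamma_{\sigma_0}$ or $\Gamma_{\sigma_1}$ hosts the perpendicular midpoint. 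Finally, the error $O(\lambda^{3n/2})$ should come from cubic corrections in the Taylor expansion of $h$ combined with the $\lambda^{n/2}$ size of the deviations in hyperbolic coordinates.
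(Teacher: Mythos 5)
Your outline reproduces the paper's own strategy essentially step for step: the palindromic symmetry forcing perpendicular bounces at positions $0$ and $n+1$, the orbit homoclinic to $\sigma$ on its stable manifold giving the regularized constant $\mathcal{L}^\infty$, a linearization in hyperbolic coordinates yielding deviations of order $\lambda^{(n+1)/2}$ at the turning point, the second-order Taylor expansion of the generating function in terms of the $R_i$ and the chord length, and the trace identity for $\lambda$. The part you defer as ``the hard part'' is exactly what the paper's Sections 4--5 carry out --- the relations among the stable/unstable asymptotic constants obtained from the time-reversal involution at the two perpendicular bounces, the cancellation of the first-order terms by telescoping, and the summation of the two geometric series of quadratic contributions (one along the approach to $\sigma$, one along the tail of the homoclinic orbit) --- and your attribution of the terms of $\mathcal{Q}$ to their sources is consistent with that computation.
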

\begin{coralph}
  With the same notation as in Theorem \ref{prop tperiod twwo}, we have
  \begin{align*}
    \mathrm{LE}(\sigma) &= \frac 12 \log(\mu)=-\frac 12 \log(\lambda)=-\lim_{n \to+\infty} \frac{1}{2n} \log\left( -\mathcal{L}(\tau \sigma^n)+(n+1)\mathcal{L}(\sigma)+\mathcal{L}^\infty\right).
  \end{align*}
  In particular, the value of the Lyapunov exponent of $\sigma=(\sigma_1 \sigma_0)$ is determined by the Marked Length Spectrum.
\end{coralph}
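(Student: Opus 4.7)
The plan is to read Theorem \ref{prop tperiod twwo} as a sharp asymptotic of the form
\begin{equation*}
  -\mathcal{L}(\tau\sigma^n)+(n+1)\mathcal{L}(\sigma)+\mathcal{L}^\infty
  = C\cdot \mathcal{Q}_n \cdot \lambda^n\bigl(1+O(\lambda^{n/2})\bigr),
\end{equation*}
where $\mathcal{Q}_n$ denotes the value of $\mathcal{Q}$ at the appropriate pair of arguments, depending on the parity of $n$. Once this is in place, extracting the Lyapunov exponent is essentially a matter of taking a logarithm; the corollary follows by a direct computation.

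First, I observe that the footnote in Theorem \ref{prop tperiod twwo} already provides the crucial positivity: since periodic orbits minimize length (and $\tau\sigma^n$ is longer than the concatenation of $\tau$ with $n$ copies of $\sigma$), the left hand side above is strictly positive for $n$ large. Because $C>0$ and $\lambda^n>0$, the factor $\mathcal{Q}_n$ must therefore be positive for all large $n$ of each parity. Taking the logarithm of the displayed expansion then gives
\begin{equation*}
  \log\bigl(-\mathcal{L}(\tau\sigma^n)+(n+1)\mathcal{L}(\sigma)+\mathcal{L}^\infty\bigr)
  = n\log\lambda + \log(C\mathcal{Q}_n) + O(\lambda^{n/2}).
\end{equation*}
Dividing by $2n$ and passing to the limit $n\to+\infty$, the term $\log(C\mathcal{Q}_n)/(2n)$ vanishes (since $\mathcal{Q}_n$ is bounded between two positive constants as $n$ varies within a fixed parity class, and we may treat even and odd $n$ separately and then combine), and the error term $O(\lambda^{n/2})/(2n)$ also vanishes. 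This yields $\frac{1}{2}\log\lambda$, and hence the proposed identity
$\mathrm{LE}(\sigma)=-\frac{1}{2}\log\lambda=-\lim_{n\to+\infty}\frac{1}{2n}\log(\cdots)$.

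To deduce that $\mathrm{LE}(\sigma)$ is determined by the Marked Length Spectrum, I note that the quantities $\mathcal{L}(\tau\sigma^n)$ and $\mathcal{L}(\sigma)$ appearing inside the logarithm belong by definition to $\mathcal{L}$. The only remaining ingredient is $\mathcal{L}^\infty$, which a priori is only asserted to exist in Theorem \ref{prop tperiod twwo}. But the asymptotic itself gives
\begin{equation*}
  \mathcal{L}^\infty=\lim_{n\to+\infty}\bigl(\mathcal{L}(\tau\sigma^n)-(n+1)\mathcal{L}(\sigma)\bigr),
\end{equation*}
since the remainder $-C\mathcal{Q}_n\lambda^n+O(\lambda^{3n/2})$ tends to $0$ as $\lambda<1$. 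Hence $\mathcal{L}^\infty$ is itself recovered from the Marked Length Spectrum, and substituting it into the formula for $\mathrm{LE}(\sigma)$ expresses the Lyapunov exponent as a limit involving only values of $\mathcal{L}$.

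The argument is essentially bookkeeping on top of Theorem \ref{prop tperiod twwo}, so there is no real obstacle; the only point requiring a moment of care is ensuring that the leading coefficient $C\mathcal{Q}_n$ does not vanish or oscillate in sign, which is guaranteed by the minimizing property of periodic billiard orbits recalled in the footnote. One could also, for cleanliness, verify directly from the explicit form of $\mathcal{Q}$ and the equation $4\lambda(\mathcal{L}(\sigma)/(2R_0)+1)(\mathcal{L}(\sigma)/(2R_1)+1)=(1+\lambda)^2$ that $\mathcal{Q}(2R_0/\mathcal{L}(\sigma),2R_1/\mathcal{L}(\sigma))>0$ and likewise with the arguments swapped, but this is not strictly needed for the corollary.
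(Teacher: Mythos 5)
Your argument is correct and is essentially the paper's own: the paper likewise deduces the corollary directly from the signed expansion $\mathcal{L}(h_n)-(n+1)\mathcal{L}(\sigma)-\mathcal{L}^\infty=-\ell\,\xi_\infty^2 C_\varphi^s\,\mathcal{Q}(\cdot,\cdot)\,\lambda^{n+1}+O(\lambda^{3n/2})<0$ by taking logarithms and dividing by $2n$, and it recovers $\mathcal{L}^\infty$ from the Marked Length Spectrum as $\lim_{n\to\infty}\bigl(\mathcal{L}(h_n)-(n+1)\mathcal{L}(\sigma)\bigr)$, exactly as you do. One small correction to your closing remark: strict positivity of the leading coefficient \emph{is} needed (the sign of the left-hand side only yields $\mathcal{Q}_n\ge 0$ up to an $O(\lambda^{n/2})$ error, and a vanishing coefficient would change the limit), but it does follow from the check you sketch, since substituting the eigenvalue relation turns $\mathcal{Q}$ into $(1+\lambda^2)u^2-4\lambda uv+2\lambda v^2$ with $u=1+X$, $v=1+Y$, whose discriminant in $u$ is $-8\lambda(1-\lambda)^2v^2<0$, so this is not a genuine gap.
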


\begin{remark}
In the case of two strictly convex obstacles,  Stoyanov   \cite{S} considered  expressions similar to those in Theorem  \ref{prop tperiod twwo} but for sequences of travelling times of billiard trajectories with $n\geq 1$ reflection points between the two scatterers instead of lengths of periodic orbits (there is a unique periodic orbit in this situation, of period two), and  showed an analogous asymptotic with an exponentially small error term. Based on that, he was able to  recover the distance between the two obstacles
and the Lyapunov exponent of the unique period two orbit. For further related results see also Section 10.3 in \cite{PS1} and Section 7 in \cite{PS2}.
\end{remark}


The proof of Theorem \ref{prop tperiod twwo} is given in
Section~\ref{section estimmmm}, and is based on a procedure explained in
Section~\ref{section estiitm}, where we introduce a sequence of periodic
orbits shadowing some orbit that is homoclinic to the period two orbit
$\sigma$.  Such orbits capture local geometric information on the orbit
$\sigma$.  First, we estimate the parameters of the points in these
orbits, and then we show that when we combine the lengths of these
orbits in an appropriate way, then the differences converge to zero
exponentially fast, at the rate given by the Lyapunov exponent of
$\sigma$. Besides, the error term is maximal near the orbit $\sigma$,
which allows us to extract some geometric information on $\sigma$.

Recall that the curvature $\mathcal{K}$ at a point of the table is
defined as the inverse of the radius of curvature $R$, i.e.,
$\mathcal{K}=R^{-1}$. Then, Theorem \ref{prop tperiod twwo} has the
following geometric consequence (see Corollary \ref{coro expan}  in
Subsection \ref{seubcdoeg} and also Corollary \ref{rema spectral inv} where we derive other spectral invariants).

\begin{coralph}\label{main corr}
The curvatures $\mathcal{K}_0$ and $\mathcal{K}_1$ at the bouncing points of periodic orbits of period two are determined by the Marked Length Spectrum (hence also the value of the constant $C$ in Theorem \ref{prop tperiod twwo}).
\end{coralph}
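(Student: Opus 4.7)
The plan is to extract from the Marked Length Spectrum, via Theorem~\ref{prop tperiod twwo}, enough numerical data to solve uniquely for the three unknowns $R_0$, $R_1$, and $C$. Fix the word $\sigma=(\sigma_1\sigma_0)$ and the associated homoclinic word $\tau=(\tau_1\sigma_0)$. Since $\lambda=\lambda(\sigma)<1$, I first observe that $\mathcal{L}(\sigma)$ (trivially), $\lambda$ (by the preceding corollary, since $\lambda=e^{-2\,\mathrm{LE}(\sigma)}$), and
\begin{equation*}
\mathcal{L}^\infty(\sigma,\tau)=\lim_{n\to\infty}\bigl[\mathcal{L}(\tau\sigma^n)-(n+1)\mathcal{L}(\sigma)\bigr]
\end{equation*}
are MLS-invariants (the limit exists because, by Theorem~\ref{prop tperiod twwo}, the remainder is $O(\lambda^n)\to 0$). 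Multiplying $\mathcal{L}(\tau\sigma^n)-(n+1)\mathcal{L}(\sigma)-\mathcal{L}^\infty$ by $-\lambda^{-n}$ and passing to the limit along even and odd $n$ yields two further MLS-invariants
\begin{align*}
A_e := C\cdot\mathcal{Q}(X,Y), \qquad A_o := C\cdot\mathcal{Q}(Y,X),
\end{align*}
where $X:=2R_0/\mathcal{L}(\sigma)$ and $Y:=2R_1/\mathcal{L}(\sigma)$.

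Next, I exploit the algebraic relation $4\lambda(1+X)(1+Y)=(1+\lambda)^2 XY$ furnished by Theorem~\ref{prop tperiod twwo}. In the variables $S:=X+Y$ and $P:=XY$ it reads $P=4\lambda(1+S)/(1-\lambda)^2$, so $P$ is a function of $S$, and so is $D^2:=(X-Y)^2=S^2-4P$. A short direct computation yields the symmetric and antisymmetric identities
\begin{align*}
\mathcal{Q}(X,Y)+\mathcal{Q}(Y,X)&=(1+\lambda)^2\bigl[2+2S+D^2\bigr],\\
\mathcal{Q}(X,Y)-\mathcal{Q}(Y,X)&=(1-\lambda)^2(X-Y)(2+S).
\end{align*}
Thus $A_e+A_o$ and $A_e-A_o$ become explicit expressions in the two unknowns $(C,S)$, and the sign of $X-Y$ coincides with that of $A_e-A_o$. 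Eliminating $C$ by dividing these two relations reduces the problem to a single scalar equation in $S$ alone.

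The hard part will be to verify that this reduced equation admits exactly one admissible root $S$, i.e., one compatible with the positivity constraints $X,Y>0$ (equivalently $S>0$, $P>0$, $D^2\geq 0$). I would establish this by a monotonicity analysis of the relevant rational-in-$S$-and-$\sqrt{\,\cdot\,}$ function on its admissible interval; existence of such a root is automatic since the given billiard provides one. Once $S$ is pinned down, $D^2$ and $P$ are determined, and so is $D$ via its prescribed sign; hence $X$ and $Y$ are recovered, and thus $R_0,R_1$. Finally, $C$ is read off from either of the relations $C\mathcal{Q}(X,Y)=A_e$ or $C\mathcal{Q}(Y,X)=A_o$. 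This shows that the curvatures $\mathcal{K}_0=R_0^{-1}$ and $\mathcal{K}_1=R_1^{-1}$, as well as the constant $C$ of Theorem~\ref{prop tperiod twwo}, are determined by the Marked Length Spectrum.
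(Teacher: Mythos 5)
Your overall strategy coincides with the paper's proof of Corollary~\ref{coro expan}: extract $\lambda$, $\mathcal{L}^\infty$ and the two asymptotic constants $A_e=C\,\mathcal{Q}(X,Y)$, $A_o=C\,\mathcal{Q}(Y,X)$ from the Marked Length Spectrum, eliminate the unknown prefactor $C$ by taking a ratio, and couple the result with the eigenvalue relation $4\lambda(1+X)(1+Y)=(1+\lambda)^2XY$; your symmetric/antisymmetric identities and the determination of $\mathrm{sign}(X-Y)$ all check out. The gap is exactly the step you defer, and it is not a routine verification: the reduced scalar equation is \emph{not} amenable to a monotonicity argument, because the relevant function is not monotone on the admissible interval. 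Set $t:=1+S$ and $c:=16\lambda/(1-\lambda)^2$, so that $4P=ct$, $D^2=t^2-(2+c)t+1$ and $2+2S+D^2=t^2-ct+1=:w$; with $h:=t/w$ one computes
\begin{equation*}
\left(\frac{(1+\lambda)^2}{(1-\lambda)^2}\cdot\frac{A_e-A_o}{A_e+A_o}\right)^2=\frac{(1+t)^2D^2}{w^2}=1+c\,h-2(2+c)\,h^2 .
\end{equation*}
On the admissible range $t\ge t_+$ (where $t_+$ is the larger root of $D^2=0$, i.e., the point $X=Y$), the quantity $h$ decreases from $h(t_+)=\tfrac12$ to $0$, so the right-hand side rises from $0$ to the interior maximum $1+\tfrac{c^2}{8(2+c)}>1$ attained at $h=\tfrac{c}{4(2+c)}\in(0,\tfrac12)$, and then falls back to $1$. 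Every value strictly between $1$ and that maximum is therefore attained by \emph{two} distinct admissible values of $S$, i.e., by two distinct curvature pairs on the constraint curve with the same sign of $X-Y$; numerically, for $\lambda=1/2$, both $(X,Y)\approx(18.2,\,15.1)$ and $(X,Y)\approx(51.0,\,9.7)$ satisfy the eigenvalue relation and yield the same ratio $A_e/A_o\approx 1.37$. So the data you use --- $\mathcal{L}(\sigma)$, $\lambda$ and the ratio of the even/odd constants --- do not pin down $S$, and the proof cannot be completed as proposed.

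For comparison, the paper closes this last step differently: it views \eqref{first eq quad} and \eqref{second feko} as two plane algebraic curves in $(X_1,X_2)$ and argues they are ``independent'' (one is symmetric under $X_1\leftrightarrow X_2$, the other is not when $X_1\neq X_2$), then solves the system. Your reduction to a single equation in $S$ actually makes visible that distinctness of the two curves does not by itself give a unique admissible intersection point, so to complete the argument along these lines you would need genuinely more input --- e.g.\ spectral data determining $C$ (equivalently $\xi_\infty^2 C_\varphi^s$) independently of the curvatures, or an additional constraint that excludes one of the two roots.
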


Actually, we have $\lim_{n \to+\infty}\mathrm{LE}(\tau \sigma^n)=\mathrm{LE}( \sigma )$, and by considering Taylor expansions of $\mathrm{LE}(\tau \sigma^n)$, we have also shown that it is possible to recover the respective derivatives of the curvature $\mathcal{K}_0'$ and $\mathcal{K}_1'$ at the two bouncing points of the orbit $\sigma$.

As an illustration of the above results, we show that the geometry of certain billiard tables is entirely determined by the Marked Length Spectrum.

\begin{remark}[See Corollary \ref{lem easy thr dis} and the proof that follows]
  The Marked Length Spectrum determines completely the geometry of
  billiard tables
  $\mathcal{D}=\R^2 \setminus\{\obs_1,\obs_2,\dots,\obs_m\}$ obtained by
  removing from the plane $m \geq 3$ round discs $\obs_1,\dots,\obs_m$
  which satisfy the non-eclipse condition, i.e., such that the convex
  hull of any two obstacles is disjoint from the other obstacles.
\end{remark}

\subsection{Marked Length Spectrum and Marked Lyapunov
  Spectrum}\label{section marked length}

We consider a periodic orbit of period $p \geq 2$, encoded by some
admissible word
$\sigma=(\sigma_1\sigma_2\dots\sigma_p) \in \{1,2,3\}^p$. We denote by
$\lambda=\lambda(\sigma)<1$ the smallest eigenvalue of $D\mathcal{F}^p$
at the points in the orbit.

Let $\tau=(\tau^-,\tau^+)\in \{1,2,3\}^2$ be such that
$\tau^-\neq \sigma_{p}$ and $\tau^+\neq \sigma_{1}$. For any integer $n \geq 0$, we define the two words
\begin{align*}
  h_{n}^+(\sigma,\tau)&:=(\tau^+\sigma^n\tau^-\overline{\sigma}^n):=(\tau^+\underbrace{\sigma\sigma\dots \sigma}_\text{n\ \text{times}}\tau^-\underbrace{\overline{\sigma\sigma\dots \sigma}}_\text{n\ \text{times}}),\\
  h_{n}^-(\sigma,\tau)&:=(\tau^-\overline{\sigma}^n\tau^+\sigma^n):=(\tau^-\underbrace{\overline{\sigma\sigma\dots \sigma}}_\text{n\ \text{times}}\tau^+\underbrace{\sigma\sigma\dots \sigma}_\text{n\ \text{times}}).
\end{align*}


Note that the symbolic representations are just shifted by $np+1$ symbols, hence $h_{n}^+(\sigma,\tau)$ and $h_{n}^-(\sigma,\tau)$ correspond to the same periodic orbit of period $2np+2$, denoted by $h_n'$ in the following. Also, this orbit $h_n'$ is palindromic, which implies that the bounces at the symbols $\tau^+$ and $\tau^-$ are perpendicular.

\begin{theoalph}\label{main prop per}
There exist a real number $\mathcal{L}^\infty(\sigma,\tau) \in \R$ and two constants $C_{\mathrm{even}}(\sigma,\tau),C_{\mathrm{odd}}(\sigma,\tau)>0$ such that for $n \gg 1$,
\begin{enumerate}
\item if $n$ is even, then
\begin{equation*}
\mathcal{L}(h_n'(\sigma,\tau) )-2n \mathcal{L}(\sigma)-\mathcal{L}^\infty(\sigma,\tau)= -C_{\mathrm{even}}(\sigma,\tau) \cdot \lambda^{n}+O(\lambda^{\frac{3n}{2}});
\end{equation*}
\item if $n$ is odd, then
\begin{equation*}
\mathcal{L}(h_n'(\sigma,\tau) )-2n \mathcal{L}(\sigma)-\mathcal{L}^\infty(\sigma,\tau)= -C_{\mathrm{odd}}(\sigma,\tau) \cdot \lambda^{n}+O(\lambda^{\frac{3n}{2}}).
\end{equation*}
\end{enumerate}
\end{theoalph}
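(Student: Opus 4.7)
The strategy extends that of Theorem~\ref{prop tperiod twwo} to general periods, with the palindromic structure of $h_n'$ playing a central role. By the palindromic nature of the word $h_n^+(\sigma,\tau)=\tau^+\sigma^n\tau^-\overline{\sigma}^n$ combined with the time-reversal symmetry $\mathcal{I}\circ\mathcal{F}\circ\mathcal{I}=\mathcal{F}^{-1}$ recalled in the preliminaries, the trajectory $h_n'$ bounces perpendicularly at the $\tau^+$ and $\tau^-$ symbols, and its forward half from the $\tau^+$-bounce to the $\tau^-$-bounce is mapped onto its backward half by time-reversal. Consequently $\mathcal{L}(h_n')=2L_n^{\mathrm{half}}$, reducing the problem to the estimation of $L_n^{\mathrm{half}}$.

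I would then introduce local hyperbolic coordinates $(\alpha,\beta)$ near $\sigma$, adapted to the stable and unstable eigendirections of $D_{x_1}\mathcal{F}^p$ whose eigenvalues are $\lambda$ and $\mu:=\lambda^{-1}$. For each loop $k\in\{1,\dots,n\}$ of the forward half, the point $z_k$ of $h_n'$ at the $\sigma_1$-site has coordinates $(\alpha_k,\beta_k)$, with $\alpha_{k+1}=\lambda\alpha_k+O(|z_k|^2)$ and $\beta_{k+1}=\mu\beta_k+O(|z_k|^2)$. The perpendicularity conditions at $\tau^\pm$ constrain $z_1$ and $z_n$ to lie on curves $C^\pm$ that converge, as $n\to\infty$, to the local stable, respectively unstable, manifold of $x_1$ corresponding to the half-infinite limiting trajectories emanating perpendicularly from $\tau^\pm$. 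Solving this boundary value problem yields
\begin{equation*}
\alpha_k=\lambda^{k-1}c^{+} + O(\lambda^{n-1}),\qquad \beta_k=\lambda^{n-k}c^{-} + O(\lambda^{n-1}),
\end{equation*}
for constants $c^\pm=c^\pm(\sigma,\tau)$; in particular, $|z_k|=O(1)$ near the endpoints and $|z_k|\sim\lambda^{n/2}$ at the midpoint $k\sim n/2$.

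The crucial step is the expansion of $L_n^{\mathrm{half}}$. I would view $h_n'$ as a perturbation of the ``glued'' trajectory formed from two half-infinite homoclinic-to-$\sigma$ orbits (one emanating perpendicularly from $\tau^+$, the other from $\tau^-$, obtained as the $n\to\infty$ limits above), truncated at $n$ loops and re-glued at the midpoint. Writing bounce parameters as perturbations $s_j=s_j^{\mathrm{glue}}+\delta_j$, the first variation of the length along each homoclinic vanishes by criticality, and the second variation is a tridiagonal quadratic form in the $\delta_j$'s. Using the linearized billiard equation and a discrete integration by parts, this form reduces to a boundary contribution concentrated at the midpoint, which to leading order is a constant multiple of $c^+c^-\lambda^n$. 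Combined with the $O(\lambda^n)$-tails of the truncated homoclinics beyond loop $n$, this gives
\begin{equation*}
L_n^{\mathrm{half}}=n\,\mathcal{L}(\sigma)+\tfrac{1}{2}\mathcal{L}^\infty(\sigma,\tau)-\tfrac{1}{2}C\lambda^n+O(\lambda^{3n/2})
\end{equation*}
for a positive constant $C=C(\sigma,\tau)$; doubling by palindromic symmetry yields the claim.

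The parity dependence of $C$ reflects the interplay between the period-$p$ structure of $\sigma$ and the doubled palindromic structure of $h_n'$: as $n$ changes parity, the relative phase of the eigenframes at which the two halves are matched shifts by one $\sigma$-period, modifying the effective quadratic form and producing the two distinct constants $C_{\mathrm{even}}$ and $C_{\mathrm{odd}}$. The main technical obstacle will be controlling the uniform $O(\lambda^{3n/2})$ remainder: this requires carefully combining a smooth hyperbolic linearization of $\mathcal{F}^p$ near $\sigma$ with the $C^3$-regularity of the billiard map, so that the cubic terms of the length expansion at the midpoint contribute precisely at order $(\lambda^{n/2})^3=\lambda^{3n/2}$.
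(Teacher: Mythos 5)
Your proposal follows essentially the same route as the paper: shadowing of homoclinic orbits to $\sigma$, a $C^{1,\frac 12}$ linearization near the saddle fixed point of $\mathcal{F}^p$ giving stable/unstable coordinates decaying as $\lambda^{k}$ and $\lambda^{n-k}$ from the two perpendicular bounces at $\tau^{\pm}$, vanishing of the first variation by criticality of the orbits, and a second-order expansion of the generating function whose sum is dominated by the closest approach to $\sigma$, yielding the $\lambda^{n}$ term with a parity-dependent constant and an $O(\lambda^{\frac{3n}{2}})$ remainder. The only differences are organizational (you compare $h_n'$ with a single glued pair of truncated homoclinics over the half from $\tau^{+}$ to $\tau^{-}$ and telescope the second variation to a midpoint boundary term, whereas the paper splits the orbit into two palindromic halves centered at $\tau^{\pm}$, compares each to its own homoclinic $h_{\infty}^{\pm}$ and then to $\sigma$, and sums the resulting geometric series explicitly), and these do not change the substance of the argument.
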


The proof of the above theorem is given in Section \ref{section lyaapu} and  is along the same lines as in the case of the period two.
As a direct consequence of Theorem \ref{main prop per}, we obtain:
\begin{coralph}\label{corrror firs}
The Marked Lyapunov Spectrum  is entirely determined by the Marked Length Spectrum (see \eqref{marked spectrum} and \eqref{maked lepnov} for the definitions).
\end{coralph}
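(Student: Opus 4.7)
The plan is to apply Theorem \ref{main prop per} to isolate $\lambda(\sigma)$ from values of $\mathcal{L}$ alone, along the same lines as the Corollary following Theorem \ref{prop tperiod twwo}, but with a small extra step to handle the unknown constant $\mathcal{L}^\infty$.

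First, fix an arbitrary periodic orbit encoded by $\sigma=(\sigma_1\ldots\sigma_p)$ of period $p\geq 2$. Because there are three obstacles, we may choose $\tau=(\tau^-,\tau^+)$ with $\tau^-\ne\sigma_p$ and $\tau^+\ne\sigma_1$. Then every $h_n'(\sigma,\tau)$ is an admissible finite word of length $2np+2$, and in particular each $\mathcal{L}(h_n'(\sigma,\tau))$ is data supplied by the Marked Length Spectrum. Consequently the sequence
\begin{equation*}
a_n \;:=\; \mathcal{L}(h_n'(\sigma,\tau))\;-\;2n\,\mathcal{L}(\sigma)
\end{equation*}
is built entirely from spectrum values. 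I would restrict to $n$ even so that the prefactor is the same $C_{\mathrm{even}}(\sigma,\tau)$ throughout; Theorem \ref{main prop per} then reads
\begin{equation*}
a_n \;=\; \mathcal{L}^\infty(\sigma,\tau) \;-\; C_{\mathrm{even}}(\sigma,\tau)\,\lambda(\sigma)^n \;+\; O(\lambda(\sigma)^{3n/2}).
\end{equation*}

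Second, I would eliminate the unknown $\mathcal{L}^\infty(\sigma,\tau)$ by passing to the second difference in $n$:
\begin{equation*}
a_{n+2}-a_n \;=\; C_{\mathrm{even}}(\sigma,\tau)\,(1-\lambda(\sigma)^2)\,\lambda(\sigma)^n \;+\; O(\lambda(\sigma)^{3n/2}).
\end{equation*}
Since $C_{\mathrm{even}}(\sigma,\tau)>0$ and $0<\lambda(\sigma)<1$, the prefactor $C_{\mathrm{even}}(\sigma,\tau)(1-\lambda(\sigma)^2)$ is a positive constant, so $a_{n+2}-a_n>0$ for all sufficiently large even $n$, and
\begin{equation*}
\frac{1}{n}\log(a_{n+2}-a_n)\;\xrightarrow[n\to\infty,\;n\,\mathrm{even}]{}\;\log\lambda(\sigma).
\end{equation*}
Recalling $\mathrm{LE}(\sigma)=-\tfrac{1}{p}\log\lambda(\sigma)$, this exhibits the Lyapunov exponent as an explicit limit of quantities read off from the Marked Length Spectrum. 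Since $\sigma$ was arbitrary, the entire Marked Lyapunov Spectrum is determined.

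The main obstacle — really more of a subtlety — is that Theorem \ref{main prop per} involves the two unknown constants $\mathcal{L}^\infty(\sigma,\tau)$ and $C_{\mathrm{even/odd}}(\sigma,\tau)$, neither of which is directly accessible from the Marked Length Spectrum; in particular, the naive formula $\tfrac{1}{n}\log|\text{leading term}|$ can only be applied once the constant $\mathcal{L}^\infty$ has been removed. The second-difference trick above kills $\mathcal{L}^\infty$ cleanly while preserving the rate $\lambda^n$, and the surviving positive prefactor becomes negligible after division by $n$. An equivalent formulation would be to first note that $\mathcal{L}^\infty(\sigma,\tau)=\lim_{n\to\infty,\,n\,\mathrm{even}} a_n$ is itself spectrally determined and then apply $\tfrac{1}{n}\log(\mathcal{L}^\infty-a_n)\to\log\lambda(\sigma)$; both routes complete the argument, but the differencing version is preferable as it avoids nesting two limits.
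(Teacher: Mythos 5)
Your proposal is correct and follows essentially the same route as the paper: both extract $\log\lambda(\sigma)$ as the exponential decay rate in the asymptotics of Theorem \ref{main prop per} and then use $\mathrm{LE}(\sigma)=-\frac1p\log\lambda(\sigma)$. The only (cosmetic) difference is that the paper handles the constant $\mathcal{L}^\infty(\sigma,\tau)$ by observing it is itself the limit of the spectrally determined quantities $\mathcal{L}(h_n')-2n\mathcal{L}(\sigma)$ and then writes $\mathrm{LE}(\sigma)=-\frac1p\lim_n\frac1n\log\left(-\mathcal{L}(h_n')+2n\mathcal{L}(\sigma)+\mathcal{L}^\infty\right)$, which is exactly the ``equivalent formulation'' you mention, whereas your second-differencing eliminates $\mathcal{L}^\infty$ outright.
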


Theorem \ref{main prop per} for arbitrary period $p\ge 3$ is weaker than Theorem
\ref{prop tperiod twwo} for period two, because of two reasons.
First, observe that  period two orbits are in a sense degenerate, namely, their local dynamics
has only {\it two} free parameters. Indeed, the length of each side is given by the Marked
Length Spectrum, the angles of reflections are $\frac \pi 2$, and the only free parameters
are the two curvatures at the two collision points. Meanwhile, period $p\ge 3$ orbits
have $3p-2$ parameters:  $p-1$ independent orbit segments' lengths, $p-1$ independent angular data and $p$ curvatures at bouncing points\footnote{Even for $p=3$ we have
$7$ vs $2$ free parameters!}. More precisely, to prove an analog of Theorem
\ref{prop tperiod twwo} we face the following problem.

\begin{remark}
  The proofs of Theorem \ref{prop tperiod twwo} in the period two case
  and Theorem \ref{main prop per} for a general periodic orbit $\sigma$
  are based on precise estimates on the parameters of certain sequences
  of orbits which approximate $\sigma$. Such estimates are obtained by
  linearizing the dynamics; indeed, each point in the orbit $\sigma$ is
  a saddle fixed point of the $p^{\text{th}}$ iterate of the billiard
  map $\mathcal{F}$, where $p \geq 2$ is the period of $\sigma$. This
  yields two independent relations according to the parity of the number
  $n\geq 0$ of repetitions of $\sigma$ in the coding of the
  approximating sequences. Besides, those relations carry some
  asymmetric information between the points of $\sigma$.  This is enough
  to reconstruct the curvature for period two orbits
  (Corollary~\ref{main corr}), but it is insufficient when the period
  $p$ is at least $3$, due to the large number of parameters encoded
  implicitly by $\mathcal{F}^p$. One (unsuccessful) strategy that we
  have tried to produce more relations was to shift the reference point
  of $\sigma$ and vary the word $\tau$ used to define approximating
  sequences. This does modify the geometry of the associated orbits; yet,
  those changes happen ``far'' from the orbit $\sigma$ that we want to
  describe, while the estimates in Theorem \ref{main prop per} are
  obtained by considering points in a neighborhood of $\sigma$. Such
  changes result in a scaling of the asymptotic constants in the
  estimates of Subsection \ref{subsection esit parammm}, but do not give
  any additional asymmetric information to distinguish between the
  points of $\sigma$.
\end{remark}

\subsection*{Organization of the paper}

The rest of the paper is organized as follows. We fix a period two orbit
$AB$ (see Fig.\,\ref{fig:Hom-1}). In Section \ref{section estiitm} and Section \ref{sec:improved-estimates}
we describe a class of homoclinic orbits to $AB$ and estimate the speed
these homoclinic orbits are approximated by periodic orbits.  In Section \ref{sec:consec-MLS}
 we utilize these estimates, prove Theorem
\ref{prop tperiod twwo}, and derive the associated constants.  In
Section \ref{section lyaapu}, we prove Theorem \ref{main prop per} and
its Corollaries.

\section{Estimates on periodic orbits shadowing some homoclinic orbit}\label{sec sec per deux}\label{section estiitm}

\subsection{Idea of the construction}

In the following, we explain a geometric construction which allows us to
extract from the Marked Length Spectrum some geometric information near
a periodic orbit.  In most of the paper, we explain in detail this
procedure in the case where the periodic orbit has period two.  As we
shall see later, the general construction as well as most estimates
remain true for more general periodic orbits. Yet, period two orbits
have special features which make them nicer to deal with, namely:
\begin{itemize}
\item they have some palindromic symmetry; loosely speaking, this will
  be especially useful in the following in order to connect the future
  and the past of certain orbits that we introduce in the construction;
\item we know the angles at the two bouncing points (the orbit hits the
  table perpendicularly at these two points) as well as the length of
  the two arcs of the orbit (they are equal to half of the total length
  of the orbit). In particular, if we want to describe the local
  geometry near the points of the orbit, then at the leading order, the
  only quantity to reconstruct is the radius of curvature at the two
  bouncing points.
\end{itemize}

The construction we are about to describe is based on an orbit
homoclinic to the chosen period two orbit identified by $\sigma$, which
conveys a lot of geometric information on $\sigma$ (the same homoclinic orbit was considered in the aforementioned work of Stoyanov \cite{S}).  However since this
orbit is not periodic, this information is not readily available in
the Marked Length Spectrum. Therefore, we define a sequence
$(h_n)_{n \geq 0}$ of periodic orbits which shadow more and more closely
this homoclinic orbit. Exploiting the symbolic coding of the dynamics,
such orbits are obtained by truncating the infinite word associated to
the homoclinic orbit in order to produce a finite word, which then
corresponds to a periodic orbit (see Figure~\ref{fig:Hom-1}).  By
expansivity of the dynamics, the the sequence $(h_n)_{n \geq 0}$
converges to the limit homoclinic orbit exponentially fast, whose rate
is related to the Lyapunov exponent of the period two orbit (which is
also the Lyapunov exponent of the homoclinic orbit). By comparing the
length of $h_n$ with the length of the original period two orbit, we see
that the estimates involve two different constants depending on the
parity of $n\geq 0$. This is due to some asymmetry in the orbits $h_n$:
indeed, as we will see, among all the points in the orbits $h_n$, there
is exactly one bounce which is closest to the period two orbit that we
approximate, and this bounce is on one of the two obstacles associated
to the period two orbit, which depends on the parity of $n$.
Combining these estimates with the properties of period two orbits that
we described above, we see that this is actually sufficient to recover the
radius of curvature at each point of the period two orbit under
construction.

\subsection{The case of period two orbits}

In this part, we focus on the first two obstacles
$\mathcal{O}_1,\mathcal{O}_2$, and on the period two orbit between
them. It is labelled by the word $\sigma=(12)$ according to the marking
introduced
earlier.
We denote by $\mathcal{L}(12)$ the length of the periodic orbit $(12)$,
and we denote by $A$ (\resp $B$), the point in the orbit which lies on
the boundary $\Gamma_1$ of the first obstacle (\resp on the boundary
$\Gamma_2$ of the second obstacle).  We let
$\ell:=\frac 12 \mathcal{L}(12)$ be the distance between the points $A$
and $B$ in the plane, and we assume that $A=(-\frac 12 \ell,0)$ and
$B=(\frac 12 \ell,0)$.  We also choose the parametrizations of
$\mathcal{M}_i$, $i \in \{1,2,3\}$, in such a way that $(0,0)=(0_1,0)$
(\resp $(0,0)=(0_2,0)$), are the $(s,\varphi)$-coordinates of the
(perpendicular) bounce of the orbit $\sigma$ at the point $A$ (\resp
$B$). 
In particular, we have $A=\gamma_1(0)$ and $B=\gamma_2(0)$.


For $\{i,j\}=\{1,2\}$, the matrix of the differential of the billiard
map $\mathcal{F}$ in $(s,\varphi)$-coordinates at the point with
coordinates $(0_i,0)$ is
\begin{align*}
  D_{(0_i,0)}\mathcal{F}=-\begin{pmatrix}
    \frac{\ell}{R_i}+1 & \ell\\
    \frac{\ell}{R_iR_j}+\frac{1}{R_i}+\frac{1}{R_j} & \frac{\ell}{R_j}+1
  \end{pmatrix},
\end{align*}
where $R_1=\mathcal{K}_1^{-1}$ (\resp $R_2=\mathcal{K}_2^{-1}$) denotes
the radius of curvature at the point $A$ (\resp $B$).

The periodic orbit $(12)$ is hyperbolic.  We denote by $\lambda=\lambda(12)<1$, $\mu=\mu(12):=\lambda^{-1}>1$ the common eigenvalues of the differentials  $D_{(0_1,0)}\mathcal{F}^2=D_{(0_2,0)}\mathcal{F}\circ D_{(0_1,0)}\mathcal{F}$ and $D_{(0_2,0)}\mathcal{F}^2=D_{(0_1,0)}\mathcal{F}\circ D_{(0_2,0)}\mathcal{F}$.
In particular, $\pm \frac 12 \log \mu=\mp \frac 12 \log \lambda$ are the two Lyapunov exponents of the hyperbolic fixed points $(0_1,0)$ and $(0_2,0)$ of $\mathcal{F}^2$.

\begin{lemma}\label{bounded domm}
  For any $x=(s,\varphi) \in \widetilde{\mathcal{M}}$, the point
  $\gamma(s) \in \partial \mathcal{D}\subset \R^2$ is in the region of
  the plane bounded by the three segments and the three arcs connecting
  the points in period two orbits.
\end{lemma}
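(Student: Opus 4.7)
The plan is to show the lemma by a geometric argument combining the convexity of the obstacles with the non-eclipse condition. I would consider $x = (s,\varphi) \in \widetilde{\mathcal{M}}_i^j$, so that the trajectory starting at $\gamma(s) \in \Gamma_i$ reaches $\Gamma_j$ after one step, and show that $\gamma(s)$ lies on the interior arc of $\Gamma_i$ between the period-two bounce points $P_i^j$ and $P_i^k$ (where $\{i,j,k\}=\{1,2,3\}$ and $P_i^j$ denotes the bounce point on $\Gamma_i$ of the period-two orbit between $\mathcal{O}_i$ and $\mathcal{O}_j$).

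First, I would make the region $R$ precise. Its boundary is a piecewise smooth closed curve obtained by following, in order, the interior arc on $\Gamma_i$ from $P_i^j$ to $P_i^k$, the period-two segment $[P_i^k, P_k^i]$, the interior arc on $\Gamma_k$ from $P_k^i$ to $P_k^j$, the period-two segment to $P_j^k$, the interior arc on $\Gamma_j$ to $P_j^i$, and finally the period-two segment back to $P_i^j$. This closed curve bounds a topological disk $R$, and the intersection $\partial R \cap \partial \mathcal{D}$ is exactly the union of the three interior arcs. Hence showing $\gamma(s) \in \overline{R}$ reduces to showing that $\gamma(s)$ lies on one of these three arcs.

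Second, the key step is to argue that the trajectory segment from $\gamma(s)$ to $\mathcal{F}(x)$ lies entirely in $\overline{R}$. I would use the non-eclipse condition: the third obstacle $\mathcal{O}_k$ lies strictly on the ``interior side'' of the period-two segment $[P_i^j, P_j^i]$, namely on the side containing the triangular region $R$. Any admissible trajectory from $\Gamma_i$ to $\Gamma_j$ (in the sense that its forward and backward extensions do not escape to infinity, which is the setting relevant to the symbolic dynamics and periodic-orbit analysis of the paper) must lie on the interior side of $[P_i^j, P_j^i]$, since on the exterior side the reflected trajectory would be directed away from the other obstacles and escape. The interior portion of the cushion between $\mathcal{O}_i$ and $\mathcal{O}_j$ is contained in $\overline{R}$, and it follows that the endpoints of the trajectory, in particular $\gamma(s)$, lie on $\partial R \cap \partial \mathcal{D}$, which is the union of the three interior arcs.

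The main obstacle I anticipate is justifying rigorously the claim that admissible trajectories from $\Gamma_i$ to $\Gamma_j$ must lie on the interior side of the period-two segment $[P_i^j, P_j^i]$. Geometrically this is intuitive, as the period-two segment is perpendicular to both $\Gamma_i$ and $\Gamma_j$ and acts as an extremal geodesic between them, so trajectories on the ``wrong side'' produce post-reflection directions that point away from the third obstacle and from the triangular region. Making this precise requires combining the reflection law, the expansion estimate \eqref{expansion first} for dispersing wave fronts, and a careful use of the non-eclipse condition to rule out admissible trajectories on the exterior side.
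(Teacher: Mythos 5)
Your proposal sets up the right region and correctly identifies that the whole lemma reduces to showing that an orbit of a non-escaping point cannot start on the ``exterior side'' of a period-two bounce point. But that reduction is essentially a restatement of the lemma, and the one step you defer --- ``justifying rigorously the claim that admissible trajectories from $\Gamma_i$ to $\Gamma_j$ must lie on the interior side of the period-two segment'' --- is precisely where all the content lies. Moreover, the heuristic you offer for it (``on the exterior side the reflected trajectory would be directed away from the other obstacles and escape'') does not survive scrutiny as stated: a trajectory starting beyond the bounce point $A$ need not escape to infinity in any finite number of steps, and nothing elementary prevents it from bouncing between $\mathcal{O}_1$ and $\mathcal{O}_2$ forever with symbolic coding $1212\dots$, never meeting the third obstacle and never leaving the table. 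Ruling out this scenario is the actual difficulty, and it is not a matter of the reflection law plus non-eclipse alone.

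The paper's proof supplies exactly the missing mechanism. Arguing by contradiction, one first uses the time-reversal symmetry $\mathcal{I}\circ\mathcal{F}\circ\mathcal{I}=\mathcal{F}^{-1}$ to normalize the sign of the initial angle ($\varphi_0\ge 0$, pointing away from the segment $AB$); strict convexity then forces the angle sequences $(\varphi(2k))_k$ and $(-\varphi(2k+1))_k$ to be strictly increasing, so the orbit never crosses back and its coding must be $1212\dots$. The key step is then that, by the exponential growth of dispersing wave fronts \eqref{expansion first} (expansiveness), the only orbit with that coding through $\Gamma_1$ is the period-two orbit itself: the hypothetical point must lie on the stable manifold of $(0_1,0)$, so $\varphi(2k)\to 0$, contradicting the strictly increasing nonnegative angle sequence. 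Your proposal gestures at \eqref{expansion first} as an ingredient but never articulates this stable-manifold/expansiveness contradiction, which is the step that actually closes the argument; as written, the proof is incomplete.
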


\begin{proof}
Let us assume, by contradiction, that there exists some $x_0=(s_0,\varphi_0) \in \widetilde{\mathcal{M}}$  such that the associated point in $\partial\mathcal{D}$ lies on $\Gamma_1$ beyond the point $A$, i.e., such that $\gamma_1(s_0)=(\bar x,\bar y)$, with $\bar y > 0$. After possibly replacing $x_0$ with $\mathcal{I}( x_0)=(s_0,-\varphi_0)$, we may assume that $\varphi_0\geq 0$, in such a way that the associated vector does not point towards the segment AB. Indeed, by the time-reversal property, the orbit of $\mathcal{I}( x_0)$  under $\mathcal{F}$ is the orbit of $x_0$ traversed backwards,  hence this change does not affect the fact that the orbit escapes to infinity or not. Let us denote by $(x(n)=(s(n),\varphi(n)))_{n \geq 0}$, the forward iterates of $x(0):=x_0$ under $\mathcal{F}$. By strict convexity of the obstacles, the sequences $(\varphi(2k))_{k\geq 0}$ and $(-\varphi(2k+1))_{k\geq 0}$ are strictly increasing, and then, the associated points in $\partial\mathcal{D}$ never come back to the region below the segment $AB$. Therefore, the symbolic coding of the forward orbit $(x(n))_{n \geq 0}$ is $121212\dots$, which is also the coding of the forward iterates of the point $(0_1,0)$ in the period two orbit $(12)$.
By the exponential growth of wave fronts (see \eqref{expansion first}), this implies that $x_0$ is in the stable manifold of the point $(0_1,0)$.
Therefore, $\lim_{k\to +\infty}x(2k)=(0_1,0)$, and the sequence $(\varphi(2k))_{k \geq 0}$ converges  to $0$, a contradiction, since it is also non-negative and strictly increasing.

Let us give a few more
details. 
Take 
a continuous monotonic map $\phi\colon[0,s_0]\to \R_+$ such that
$\phi(0)=0$  and $\phi(s_0)=\varphi_0$, and set
$$
\mathcal{W}^{(2k)}:=\{x^{(2k)}(s):= \mathcal{F}^{2k}(s,\phi(s)):0\leq s \leq s_0\},\quad \forall\, k \geq 0.$$
By the strict convexity of the obstacles, and since the symbolic coding of  $(x(n))_{n \geq 0}$ is $121212\dots$, then for each $k \geq 0$,  $\mathcal{W}^{(2k)}$ is a dispersing wave front 
whose projection 
on the table is the arc of $\Gamma_1$ bounded by the points $A$ and $\gamma_1(s(2k))$.
We obtain
\begin{align*}
&|\mathcal{W}^{(2k)}|=\left|\int_{0}^{s_0} \|D_{x^{(0)}(s)} \mathcal{F}^{2k} \cdot (x^{(0)})'(s)\|_p\, ds\right|\\
&\geq \Lambda^{(2k)} \left|\int_{0}^{s_0} \|(x^{(0)})'(s)\|_p\, ds\right|\geq \Lambda^{(2k)} \cdot \|x^{(0)}(s_0)\|_p=\Lambda^{(2k)} \cdot \|x_0\|_p. 
\end{align*}
By \eqref{expansion first}, the sequence $( \Lambda^{(2k)})_{k\geq 0}$ goes to $+\infty$, while
the left hand side is uniformly bounded.\footnote{Note that the p-length of a dispersing wavefront is uniformly bounded by the length of its trace on the scatterer.\label{foonote cinq}} 
Letting $k\to +\infty$, we deduce that  $x_0=(s_0,\varphi_0)=(0_1,0)$, and thus $\gamma_1(s_0)=\gamma_1(0_1)=A$, a contradiction.
\end{proof}

\subsection{Shadowing by palindromic periodic orbits}

For any integer $n \geq 0$, we introduce the periodic orbit $h_n$ associated to the symbolic coding
\begin{equation*}
h_n:=(32\vert(12)^n)=(32\underbrace{121212\dots 1212}_\text{2n}).
\end{equation*}

\begin{figure}[H]
\begin{center}
    \includegraphics [width=15cm]{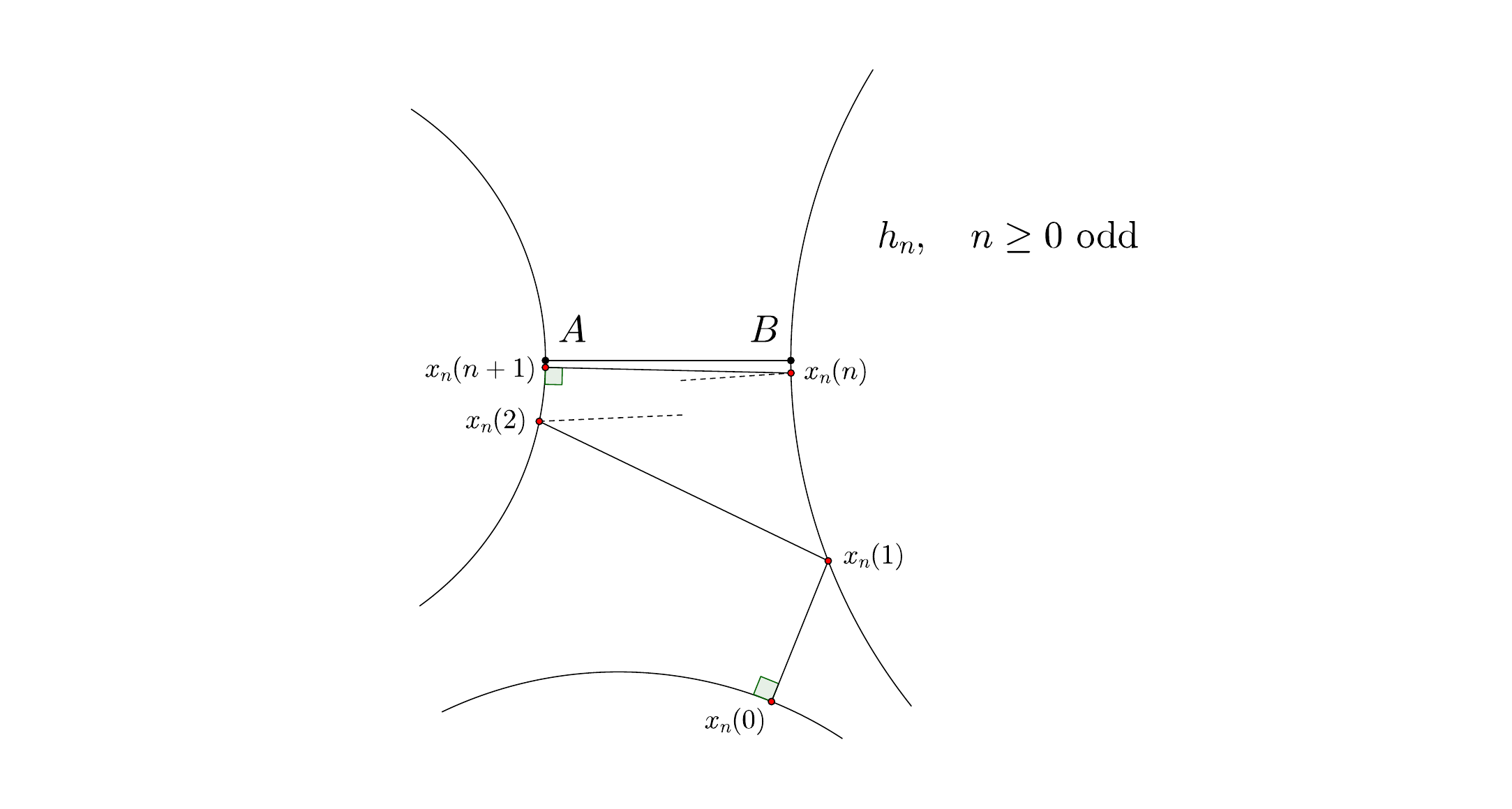}
\end{center}
\caption{Approximation of the homoclinic orbit to the period two orbit
  between the first two obstacles (here we assume that $n$ is odd).}
\label{fig:Hom-1}
\end{figure}

The period of the orbit $h_n$ is equal to $2n+2$. We denote by $x_n(k)=(s_n(k),\varphi_n(k))_{k=0,\dots,2n+1}$ the $(s,\varphi)$-coordinates of the points in this orbit, and we extend them to all integer indices by setting $x_n(k):=x_n(k \text{ mod } (2n+2))$, for all $k \in \Z$. We label the points of $h_n$ in such a way that one period of this orbit matches the following geometric description:

\begin{itemize}
\item there is a unique perpendicular bounce on the third obstacle, with coordinates $(s_n(0),0)$;
\item the following $n$ bounces alternate between the first and the
  second obstacle, getting closer to the periodic orbit $(12)$ each
  time; for each integer $k \in \{1,\dots,n\}$, the point with
  coordinates $(s_n(k),\varphi_n(k))$ is associated to a point on the
  first obstacle, resp. the second obstacle, whenever the index $k$ is
  even, resp. odd; in particular, the second bounce
  $(s_n(1),\varphi_n(1))$ is always on the second obstacle;
\item the next point has coordinates $(s_n(n+1),0)$ and is associated to a perpendicular bounce on the first obstacle, resp. second obstacle, whenever $n$ is odd, resp. even;  it is the point of $h_n$ which is closest to the orbit $(12)$;
\item the next $n$ bounces correspond to the same points  as for $k \in \{1,\dots,n\}$, the new orbit segment being the previous one traversed backwards. \\
\end{itemize}


We see that the orbit $h_n$ is asymmetric between the obstacles $1$ and
$2$, in the sense that for each odd integer $n\geq 0$, the point of
$h_n$ which is closest to the period two orbit $(12)$ is on the first
obstacle, while for each even integer $n\geq 0$, the point of $h_n$
which is closest to the period two orbit $(12)$ is on the second
obstacle.

The last item of the above description follows from the following result.
\begin{lemma}\label{lemma palind}
For any $k\in \{0,\dots,n+1\}$, we have
$$
s_n(2n+2-k)=s_n(k),\qquad \varphi_n(2n+2-k)=-\varphi_n(k).
$$
\end{lemma}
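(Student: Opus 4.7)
The plan is to exploit the time-reversal involution $\mathcal{I}\colon (s,\varphi)\mapsto (s,-\varphi)$, which satisfies $\mathcal{I}\circ\mathcal{F}\circ\mathcal{I} = \mathcal{F}^{-1}$, together with the palindromic structure of the reference coding $w=(32(12)^n)$ around position $k=0$. Concretely, I would introduce the auxiliary sequence
\[
y_n(k) := \mathcal{I}(x_n(2n+2-k)), \qquad k \in \Z.
\]
A direct computation using the time-reversal identity and the fact that $x_n$ is $\mathcal{F}^{2n+2}$-invariant gives $\mathcal{F}(y_n(k)) = y_n(k+1)$, so $(y_n(k))_{k\in\Z}$ is again an orbit of the billiard map, of period $2n+2$.

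The second step is to verify that $y_n$ and $x_n$ share the same symbolic coding. Since $\mathcal{I}$ preserves the obstacle on which a collision lies, the symbol of $y_n$ at position $k$ equals the symbol of $x_n$ at position $2n+2-k$. Using the explicit description of the cyclic word (symbol $3$ at position $0$, symbol $1$ at each even position in $\{2,\dots,2n\}$, symbol $2$ at each odd position in $\{1,\dots,2n+1\}$), the identity $w_k=w_{2n+2-k}$ holds for every $k$, because the substitution $k\mapsto 2n+2-k$ preserves the parity of $k$ and fixes the class of $0$ modulo $2n+2$. Hence $y_n$ and $x_n$ carry the same admissible coding.

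By expansivity of the subshift of finite type, two orbits with the same bi-infinite coding can differ only by a time shift. To fix the shift I would use the defining property $\varphi_n(0)=0$ of the perpendicular bounce on $\mathcal{O}_3$, which yields
\[
y_n(0) = \mathcal{I}(x_n(2n+2)) = \mathcal{I}(x_n(0)) = (s_n(0),0) = x_n(0).
\]
Therefore $y_n(k) = x_n(k)$ for every $k$, and unwinding the definition of $y_n$ produces exactly the two claimed identities. As a by-product, the case $k=n+1$ forces $\varphi_n(n+1)=0$, confirming the perpendicular bounce halfway through the orbit which was asserted without justification in the third bullet of the description preceding the lemma.

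I do not anticipate any serious obstacle. The only points that deserve some care are the elementary combinatorial check that the cyclic word $(32(12)^n)$ is palindromic about position $0$ (which depends on the specific labelling of the points $x_n(k)$ that has been adopted), and the fact that we use the perpendicular bounce on $\mathcal{O}_3$ to synchronize $x_n$ and $y_n$ at time $0$, rather than merely matching them up to a shift that the symbolic coding alone cannot determine.
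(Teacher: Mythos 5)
Your proposal is correct and follows essentially the same route as the paper: both arguments combine the time-reversal involution $\mathcal{I}$, the palindromic symmetry of the cyclic word $(32(12)^n)$, and expansivity of the symbolic coding (the paper anchors at the point $x_n(n+1)$, deducing $\mathcal{I}(x_n(n+1))=x_n(n+1)$ and then propagating with $\mathcal{F}^{k-n-1}$, while you match the entire time-reversed orbit $y_n(k)=\mathcal{I}(x_n(2n+2-k))$ against $x_n$ in one step). One small remark: since you verify that the itineraries of $y_n(k)$ and $x_n(k)$ agree position by position, expansivity already forces $y_n(k)=x_n(k)$ with no residual shift, so invoking $\varphi_n(0)=0$ to synchronize the two orbits is unnecessary — which is just as well, because in the paper that perpendicularity is obtained as a consequence of this lemma rather than being available as an independent input.
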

This explains why each of the orbit segments associated to indices in
$\{n+1,\dots,2n+2\}$ is one of the orbit segments associated to indices
in $\{0,\dots,n+1\}$, traversed backwards, according to the above
description.  In particular, this also implies that
$\varphi_n(0)=\varphi_n(n+1)=0$.
\begin{proof}
  The proof follows from some palindromic property of the orbits $h_n$,
  $n \geq 0$. Indeed, for any even integer $n \geq 0$, we have
  \begin{equation*}
    \dots \vert h_n\vert h_n\vert\hdots
    =\dots\vert\underset{\substack{\uparrow}}{3}21212\dots1\underset{\substack{\uparrow}}{2}1\dots 21212\vert\underset{\substack{\uparrow}}{3}21212\dots1\underset{\substack{\uparrow}}{2}1\dots 21212\vert\dots
  \end{equation*}
  and for any odd integer $n \geq 0$, we have
  \begin{equation*}
    \dots \vert h_n\vert h_n\vert\hdots
    =\dots\vert\underset{\substack{\uparrow}}{3}21212\dots2\underset{\substack{\uparrow}}{1}2\dots 21212\vert\underset{\substack{\uparrow}}{3}21212\dots2\underset{\substack{\uparrow}}{1}2\dots 21212\vert\dots
  \end{equation*}
  We see that the above symbolic expansions are palindromic at the
  points marked with arrows, i.e., the future and the past of such
  points have the same symbolic
  coding.

  Recall that the map
  $\mathcal{I}\colon (s,\varphi) \mapsto (s,-\varphi)$ switches future
  and past, according to the time-reversal property
  $\mathcal{I} \circ \mathcal{F} \circ \mathcal{I}=\mathcal{F}^{-1}$ of
  the billiard dynamics. Let us consider the point $x_n(n+1)$.  Due to
  the palindromic symmetry, 
  the orbits of $x_n(n+1)$ and $\mathcal{I} (x_n(n+1))$ under
  $\mathcal{F}$ have the same symbolic coding.  By expansivity of the
  dynamics of $\mathcal{F}$, we conclude that
  $\mathcal{I} (x_n(n+1))=x_n(n+1)$, and thus, $\varphi_n(n+1)=0$. For the
  same reason, we have $\varphi_n(0)=0$.  Then, for any
  $k \in \{0,\dots,n+1\}$, we obtain
  \begin{align*}
    (s_n(2n+2-k),-\varphi(2n+2-k))&=\mathcal{I} \circ \mathcal{F}^{n+1-k} (x_n(n+1))\\
                                  &= \mathcal{F}^{k-n-1} \circ \mathcal{I}(s_n(n+1),0)\\
                                  &= \mathcal{F}^{k-n-1} (x_n(n+1))\\
                                  &=(s_n(k),\varphi_n(k)).\qedhere
  \end{align*}
\end{proof}

\subsection{Preliminary estimates on the parameters at bounces}

\begin{lemma}\label{premier lemmmme}
There exists a constant $\Lambda>1$ such that for any integers $n,p \geq 0$, and for any integer $k\in \{0,\dots,2n+1\}$, we have
\begin{equation}\label{eq 1}
\|x_{n}(k)-x_{n+p}(k)\|= O(\Lambda^{-(2n+1)+k}).
\end{equation}
\end{lemma}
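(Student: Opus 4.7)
The plan is to turn the symbolic closeness of the two periodic codings into metric closeness via the standard mechanism of hyperbolic shadowing: write down how many forward symbols agree, connect the two points by a short dispersing wave front, and exploit the exponential expansion of such wave fronts.

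\textbf{Step 1 (symbolic comparison).} First I would compare the words $h_n=(32(12)^n)$ and $h_{n+p}=(32(12)^{n+p})$. Both of them are of the form $3,2,1,2,1,\dots$: a symbol $3$ at position $0$ followed by a strict alternation $2,1,2,1,\dots$, and the palindromic fold at position $n+1$ (resp.\,$n+p+1$) does not disturb this alternation because the continuation and the reflected continuation carry the same symbols. Hence the $j$th symbol of $h_n$ equals the $j$th symbol of $h_{n+p}$ for every $0\le j\le 2n+1$. Fixing $k\in\{0,\dots,2n+1\}$, the forward orbits of $x_n(k)$ and $x_{n+p}(k)$ under $\mathcal{F}$ therefore have identical symbolic codings for $N_k:=2n+1-k$ consecutive iterates; in particular $\mathcal{F}^j(x_n(k))$ and $\mathcal{F}^j(x_{n+p}(k))$ lie on the same scatterer for every $0\le j\le N_k$.

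\textbf{Step 2 (wave front and expansion).} Both $x_n(k)$ and $x_{n+p}(k)$ lie on a common scatterer $\Gamma_i$, so for $n$ large enough they are close to each other and I can join them in $\mathcal{M}_i$ by a short graph $\mathcal{W}_0=\{(s,\phi(s))\}$ chosen so that $\mathcal{B}^+=\frac{\phi'+\mathcal{K}}{\cos\phi}>0$, i.e.\ a dispersing wave front contained in $\widetilde{\mathcal{M}}$. The common symbolic coding from Step~1 guarantees that, for every $0\le j\le N_k$, the iterate $\mathcal{W}_j:=\mathcal{F}^j(\mathcal{W}_0)$ remains a well-defined dispersing wave front, with endpoints $x_n(k+j)$ and $x_{n+p}(k+j)$ on one scatterer. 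By the expansion formula \eqref{expansion first} there exists $\Lambda>1$, depending only on the billiard geometry, such that $|\mathcal{W}_j|_p\ge \Lambda^j |\mathcal{W}_0|_p$ for all such $j$.

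\textbf{Step 3 (closing the estimate).} At the maximal iterate $j=N_k$, the wave front $\mathcal{W}_{N_k}$ is supported on a single scatterer $\Gamma_{i'}$ ($i'$ being the common symbol at position $2n+1$), so by Footnote~5 its $p$-length is bounded by $\ell_{i'}$, uniformly in $n,p,k$. Inverting the expansion inequality gives $|\mathcal{W}_0|_p\le \ell_{i'}\,\Lambda^{-N_k}$, and since $\|x_n(k)-x_{n+p}(k)\|_p\le |\mathcal{W}_0|_p$, we obtain the bound in the $p$-metric. By Lemma~\ref{bounded domm} and the strict convexity of the obstacles, the reflection angles along non-escaping orbits stay uniformly away from $\pm\tfrac{\pi}{2}$, so $\cos\varphi$ is bounded below and the $p$-metric is uniformly equivalent to the Euclidean one; this yields $\|x_n(k)-x_{n+p}(k)\|=O(\Lambda^{-(2n+1)+k})$, as claimed.

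\textbf{Main obstacle.} The only delicate point in this scheme is producing an initial wave front $\mathcal{W}_0\subset\widetilde{\mathcal{M}}$ all of whose iterates up to time $N_k$ remain in the admissible strips $\widetilde{\mathcal{M}}_i^{i'}$. This amounts to a uniform shadowing statement: $x_n(k)$ and $x_{n+p}(k)$ must be close enough, uniformly in $n,p,k$, for $n\gg 1$. I would obtain this as a preliminary step by noting that any sequence of admissible orbit segments converging symbolically to the homoclinic itinerary of Section~\ref{section estiitm} is forced by expansivity to converge in the collision space, so both $x_n(k)$ and $x_{n+p}(k)$ track the same homoclinic orbit in a uniform neighborhood — after which the wave-front construction and its iteration are routine.
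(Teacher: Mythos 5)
Your overall mechanism is the same as the paper's: compare symbolic codings, join the two orbits by a dispersing wave front, expand it forward using \eqref{expansion first} with the uniform bounds coming from Lemma~\ref{bounded domm}, and use the fact that a dispersing front supported on a single scatterer has uniformly bounded $p$-length. Your Step 1 symbol count ($2n+1-k$ matching forward symbols starting from index $k$) also agrees with what the paper uses.

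The one genuine issue is the point you yourself flag as the ``main obstacle,'' and your proposed resolution does not actually close it. Expansivity gives you that $x_n(k)$ and $x_{n+p}(k)$ are \emph{close}, but closeness of two points of $\mathcal{M}$ is not sufficient to join them by a dispersing wave front: the connecting graph $\phi$ must satisfy $\phi'+\mathcal{K}>0$, which forces $\varphi_{n+p}(k)-\varphi_n(k)>-\int\mathcal{K}\,ds$ along the arc, i.e.\ the difference vector must lie in (roughly) the unstable cone. That is true here, but it requires an argument about the relative position of the two points, not just their distance. The paper sidesteps this entirely by seeding the front at index $0$: both $x_n(0)=(s_n(0),0)$ and $x_{n+p}(0)=(s_{n+p}(0),0)$ are perpendicular bounces on $\Gamma_3$, so the arc $\{(s,0)\}$ between them is automatically dispersing ($\mathcal{B}^+=\mathcal{K}>0$), and the front at index $k$ is then \emph{defined} as the push-forward $\mathcal{W}_{n,p}(k):=\mathcal{F}^k(\mathcal{W}_{n,p}(0))$, which is dispersing and has the correct endpoints for free; the desired bound at index $k$ follows by comparing $|\mathcal{W}_{n,p}(2n+1)|\geq \Lambda^{2n+1-k}|\mathcal{W}_{n,p}(k)|$ with the uniform upper bound on $|\mathcal{W}_{n,p}(2n+1)|$. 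If you replace your ad hoc front at index $k$ by this push-forward (or, equivalently, justify that the difference vector at index $k$ lies in the dispersing cone because both points share the same backward itinerary down to the perpendicular bounce), your argument becomes complete and coincides with the paper's.
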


\begin{proof}
Let $n,p \geq 0$. Recall that $x_{n}(0)=(s_n(0),0)$ and that $x_{n+p}(0)=(s_{n+p}(0),0)$. In particular, the piece of $\Gamma_{3}=\partial \mathcal{O}_3$ lying between the above points endowed with $0$ angle in all points is a dispersing wave front, that we denote by $\mathcal{W}_{n,p}(0)\subset \mathcal{M}$. For all $0 \leq k \leq 2n+1$, we set $\mathcal{W}_{n,p}(k):=\mathcal{F}^k(\mathcal{W}_{n,p}(0))$; it is also a dispersing wave front. The forward orbits of $x_n(0)$ and $x_{n+p}(0)$ have the respective symbolic codings
\begin{equation*}
3\underbrace{21212\dots 21212}_{2n+1}\vert 3\dots,\qquad
3\underbrace{21212\dots 21212}_{2n+1}\vert 1\dots,
\end{equation*}
hence  the projection of $\mathcal{W}_{n,p}(k)$ on the billard table is an arc of $\Gamma_1$, resp. $\Gamma_2$, when  $0 \leq k\leq 2n+1$ is even, resp. odd.
Recall that $\|\cdot\|_p$ is the $p$-metric on tangent vectors in $T\mathcal{M}$ as in \eqref{deefini petri}, and that for all $0 \leq k \leq 2n+1$, we denote by  $|\mathcal{W}_{n,p}(k)|:=\int_{\mathcal{W}_{n,p}(k)} \|dx\|_p$  the length of the associated wave front.

Take $x=(t,0) \in \mathcal{W}_{n,p}(0)$, and denote by $x(t,k)=(s(t,k),\varphi(t,k))\in \mathcal{W}_{n,p}(k)$, $0 \leq k \leq 2n+1$, its forward iterates. Given  any  vector $dx=(ds,d\varphi)$ in the tangent line $L\subset T_x \mathcal{M}$, then as in \eqref{expansion first}, for all $0 \leq k \leq 2n+1$, we have
\begin{equation*}
\frac{\|D_x \mathcal{F}^k\cdot dx\|_p}{\|dx\|_p}=\prod_{j=0}^{k-1}|1+\ell_j \mathcal{B}_j^+|,
\end{equation*}
with $\ell_j:=h(s(t,j),s(t,j+1))$.
In our case, Lemma \ref{bounded domm} ensures that $\ell_j \geq \ell_{\text{min}}$ and $\mathcal{B}_j^+ \geq \mathcal{B}_{\text{min}}$  for some constants $\ell_{\text{min}}, \mathcal{B}_{\text{min}}>0$ which are uniform in $n\geq 0$, and thus,
$$
\frac{\|D_x \mathcal{F}^k\cdot dx\|_p}{\|dx\|_p}\geq \Lambda^k,
$$
with $\Lambda:=1+\ell_{\text{min}} \mathcal{B}_{\text{min}}>1$. Therefore, for any $0 \leq k \leq 2n+1$, the size $|\mathcal{W}_{n,p}(k)|$ of the image by $\mathcal{F}^{k}$ of the initial wave front $|\mathcal{W}_{n,p}(0)|$ satisfies
\begin{align*}
|\mathcal{W}_{n,p}(k)|&= \int_{\mathcal{W}_{n,p}(0)} \|D_{x} \mathcal{F}^k \cdot dx\|_p
= \left|\int_{s_{n}(0)}^{s_{n+p}(0)} \big\|D_{(t,0)} \mathcal{F}^k \cdot\frac{d}{dt} (t,0)\big\|_p\, dt\right|\\
&\geq \Lambda^k \left|\int_{s_{n}(0)}^{s_{n+p}(0)} \|(1,0)\|_p\, ds\right|=\Lambda^k |s_{n+p}(0)-s_n(0)|.
\end{align*}
As we have seen, for all $0 \leq k \leq 2n+1$, the projection of $\mathcal{W}_{n,p}(k)$ is an arc of $\Gamma_1$ or $\Gamma_2$, hence $|\mathcal{W}_{n,p}(k)|\leq C$, for some constant $C>0$ depending only on the geometry of the table.\footnote{See Footnote \ref{foonote cinq}. 
} We thus obtain
$$
|s_{n+p}(0)-s_n(0)| \leq C \Lambda^{-(2n+1)}.
$$
By Lemma \ref{bounded domm}   and the non-eclipse condition,  for all $x \in \mathcal{W}_{n,p}(0)$, and $0\leq k \leq 2n+1$,  the cosine of the angle of reflection of $\mathcal{F}^k(x)$ is lower bounded by some uniform constant $c_\text{min}^{-1}>0$. This lower bound on the cosine of the angle extends from the endpoints to the entire curve $\mathcal{W}_{n,p}(k)$ as it is an increasing curve in the $(s,\varphi)$ coordinates. Hence
\begin{align*}
&\|x_{n+p}(k)-x_n(k)\|=\| x(s_{n+p}(0),k)-x(s_n(0),k)\|=\left\|\int_{s_{n}(k)}^{s_{n+p}(k)}\partial_s x(s,k)\, ds\right\|\\
&\leq c_\text{min}\left|\int_{s_{n}(k)}^{s_{n+p}(k)}\|\partial_s x(s,k)\|_p\, ds\right|=c_\text{min}\int_{\mathcal{W}_{n,p}(k)}\|dx\|_p=c_\text{min}|\mathcal{W}_{n,p}(k)|.
\end{align*}
Moreover, we have $|\mathcal{W}_{n,p}(2n+1)|=\int_{\mathcal{W}_{n,p}(k)} \|D_x \mathcal{F}^{2n+1-k} \cdot dx\|_p\geq \Lambda^{2n+1-k} |\mathcal{W}_{n,p}(k)|$, and then,
$$
\|x_{n+p}(k)-x_n(k)\| \leq c_\text{min} |\mathcal{W}_{n,p}(k)|\leq  c_\text{min}C \Lambda^{-(2n+1-k)},
$$
which concludes.
\end{proof}

\begin{corollary}\label{coroooooooo}
For any $n \geq 0$, and for any $k\in \{0,\dots,n+1\}$, we have
$$
x_n(k)=x_\infty(k)+O(\Lambda^{-n}),\qquad x_n(2n+2-k)=\mathcal{I}(x_\infty(k))+O(\Lambda^{-n}),
$$
where $(x_\infty(k)=(s_\infty(k),\varphi_\infty(k)))_{k \in \Z}$ are the $(s,\varphi)$-coordinates of the points in the palindromic orbit $h_\infty$ 
encoded by the infinite word
$$
h_\infty=((12)^\infty 3 (21)^\infty)=\dots121232121\dots
$$
Here, $x_\infty(0)=(s_\infty(0),0)$ are the coordinates of the unique
point of $h_\infty$ associated to the symbol $3$.\footnote{Due to the
  palindromic symmetry, as in Lemma~\ref{lemma palind}, the angle at this point has to vanish.}
\end{corollary}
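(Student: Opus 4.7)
The plan is to combine Lemma~\ref{premier lemmmme} with the palindromic identity from Lemma~\ref{lemma palind} and expansivity of the subshift. First, I would fix $k \geq 0$ and observe that, by Lemma~\ref{premier lemmmme}, for every $p \geq 0$,
\[
\|x_n(k) - x_{n+p}(k)\| = O(\Lambda^{-(2n+1)+k}).
\]
Since $\Lambda > 1$, the sequence $(x_n(k))_{n \geq k-1}$ is Cauchy in $\mathcal{M}$, so I may define $x_\infty(k) := \lim_{n \to +\infty} x_n(k)$. Letting $p \to +\infty$ in the bound above yields $\|x_n(k) - x_\infty(k)\| = O(\Lambda^{-(2n+1)+k})$, and whenever $0 \leq k \leq n+1$, the right-hand side is dominated by $\Lambda^{-n}$, which proves the first estimate.

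For the second estimate, I would invoke the palindromic identity $x_n(2n+2-k) = \mathcal{I}(x_n(k))$ from Lemma~\ref{lemma palind}. Since the involution $\mathcal{I} \colon (s,\varphi) \mapsto (s,-\varphi)$ is an isometry of $\mathcal{M}$,
\[
\|x_n(2n+2-k) - \mathcal{I}(x_\infty(k))\| = \|\mathcal{I}(x_n(k)) - \mathcal{I}(x_\infty(k))\| = \|x_n(k) - x_\infty(k)\| = O(\Lambda^{-n}).
\]

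It remains to identify $(x_\infty(k))_{k \in \Z}$, extended by $x_\infty(-k) := \mathcal{I}(x_\infty(k))$ for $k \geq 0$, as the orbit encoded by $h_\infty$. By continuity of $\mathcal{F}$ together with the relation $x_n(k+1) = \mathcal{F}(x_n(k))$, passing to the limit gives $x_\infty(k+1) = \mathcal{F}(x_\infty(k))$ for all $k \geq 0$, and the forward symbolic coding of $(x_\infty(k))_{k \geq 0}$ is the pointwise limit of the codings of $h_n$, namely $3\,2\,1\,2\,1\cdots$. Using the time-reversal identity $\mathcal{I} \circ \mathcal{F} \circ \mathcal{I} = \mathcal{F}^{-1}$, the backward extension satisfies $\mathcal{F}(x_\infty(-k-1)) = x_\infty(-k)$, with coding the palindromic reflection of the forward one. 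The resulting bi-infinite orbit has coding $\ldots 121232121 \ldots$, which is exactly $h_\infty$; expansivity of the subshift guarantees uniqueness, and the identity $\varphi_\infty(0) = 0$ is inherited from $\varphi_n(0) = 0$ (Lemma~\ref{lemma palind} with $k=0$). The only point that deserves real care is the matching of indices between the truncated orbits $h_n$ and the bi-infinite limit $h_\infty$, together with the use of palindromy to extend the forward limit to a genuine bi-infinite orbit; beyond this bookkeeping, the corollary is a direct consequence of the two preceding lemmas.
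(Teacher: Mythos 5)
Your argument is correct and follows essentially the same route as the paper: Cauchy convergence via Lemma~\ref{premier lemmmme} with $p\to\infty$, the palindromic identity $x_n(2n+2-k)=\mathcal{I}(x_n(k))$ from Lemma~\ref{lemma palind} combined with $\mathcal{I}$ being an isometry, and identification of the limit orbit through continuity of $\mathcal{F}$, the vanishing of $\varphi_\infty(0)$, and time-reversal. Your rendering of the backward extension and the uniqueness via expansivity is just a slightly more explicit version of the paper's remark that the future and past of $x_\infty(0)$ coincide.
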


\begin{proof}
For each $n \geq 0$, and each $0 \leq k \leq n+1$, the above estimate on $x_n(k)$ follows immediately from \eqref{eq 1}, by letting $p$ go to infinity. Indeed, for any $n_0 \geq 0$, and for any $0 \leq k \leq n_0+1$,  the sequence $(x_n(k))_{n \geq n_0}$ is Cauchy, hence converges to some limit $x_\infty(k)=(s_\infty(k),\varphi_\infty(k)) \in \R^2$. Since $x_n(k+1)=\mathcal{F}(x_n(k))$, for all $0 \leq k \leq n+1$, and by continuity of $\mathcal{F}$, we deduce that the sequence of points $(x_\infty(k))_{k \geq 0}$ is the forward orbit of $x_\infty(0)$. By the definition of $x_\infty(0)$ as a limit of points whose second coordinate vanishes, we have $x_\infty(0)=(s_\infty(0),0)$, and then the future and the past of this point coincide. Therefore, the orbit of $x_\infty(0)$ is encoded by the word as in the statement of the lemma.

The estimate on the point $x_n(2n+2-k)$ follows from Lemma~\ref{lemma
  palind}, since we have
$$
\mathcal{I}(x_\infty(k))=(s_\infty(k),-\varphi_\infty(k))=(s_n(k),-\varphi_n(k))+O(\Lambda^{-n}),
$$
and  $(s_n(k),-\varphi_n(k))=\mathcal{I}(x_n(k))=(x_n(2n+2-k))$.
\end{proof}

\begin{figure}[H]
\begin{center}
    \includegraphics [width=15cm]{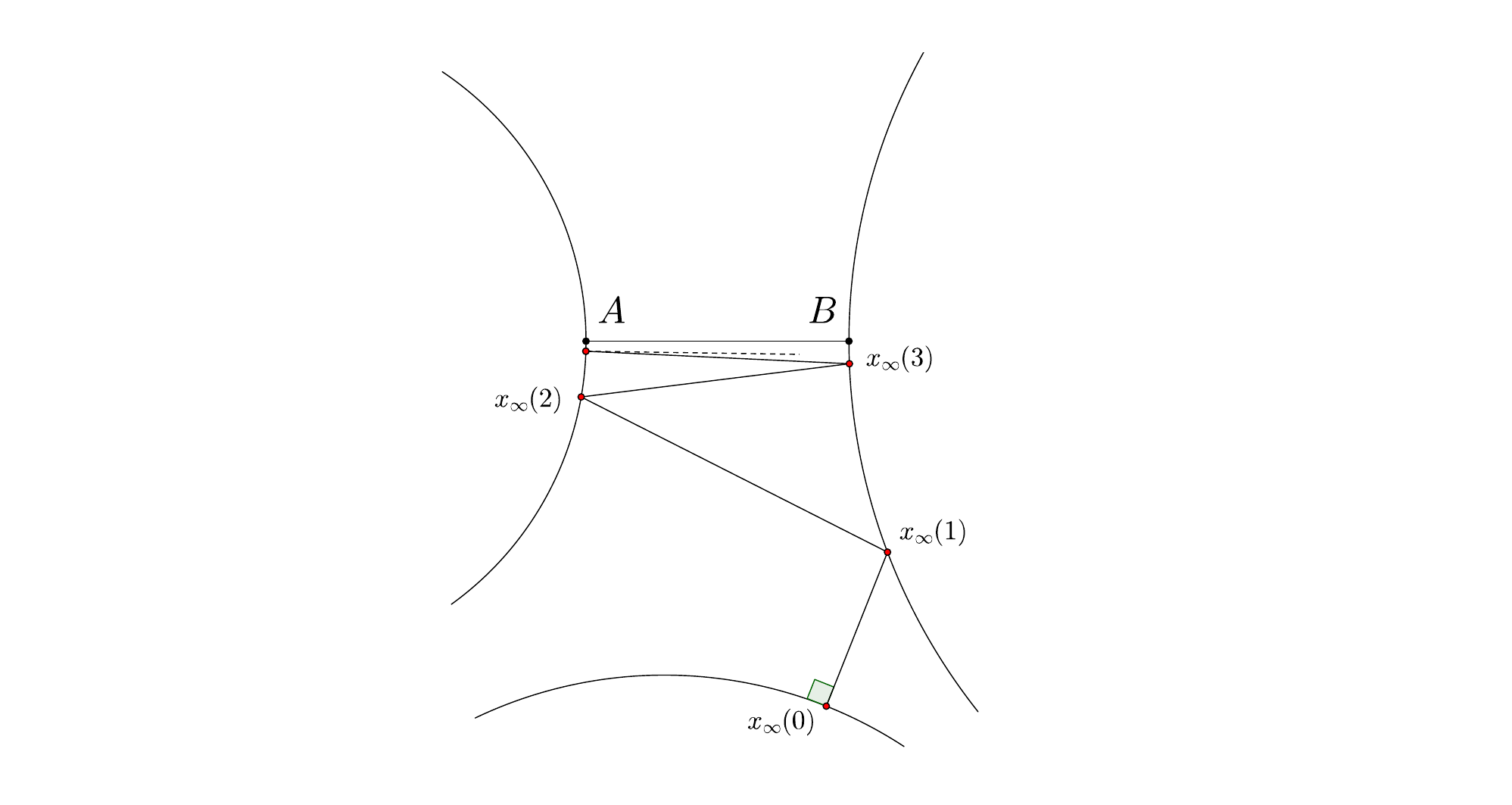}
\end{center}
\caption{Period two orbit and an approximating homoclinic}
\label{fig:Hom-2}
\end{figure}

For $j \in \{1,2\}$, we denote by  $\mathrm{E}_{\mathcal{F}^2}^s(0_j,0):=\{v\in T_{(0_j,0)} \mathcal{M}: D_{(0_j,0)} \mathcal{F}^2\cdot v=\lambda v\}$
 the stable space of $\mathcal{F}^2$ at $(0_j,0)$, 
associated to the smallest eigenvalue $\lambda<1$ of $(12)$. In the following, given two sequences $(u_k)_{k \geq 0}$ and $(v_k)_{k \geq 0}$ such that $v_k \neq 0$ for $k \geq k_0$, for some integer $k_0 \geq 0$, we write $u_k \sim v_k$ if $\lim_{k \to+\infty} \frac{u_k}{v_k}=1$.
\begin{lemma}\label{homcolincc}
The orbit $h_\infty$ is homoclinic  to the period two orbit $(12)$. More precisely, there exist two vectors $\bar{v}^s_1 \in \mathrm{E}_{\mathcal{F}^2}^s(0_1,0)$, $\bar{v}^s_2 \in \mathrm{E}_{\mathcal{F}^2}^s(0_2,0)$, with $\|\bar{v}_1^s\|=1$, and $\bar{v}_2^s= D_{(0_1,0)} \mathcal{F} \cdot \bar{v}_1^s$, such that  for $k \gg 1$, the following estimates hold: 
\begin{align*}
x_\infty(2k)=(s_\infty(2k),\varphi_\infty(2k))&\sim \|x_\infty(2k)\| \bar{v}_1^s,\\ 
x_\infty(2k+1)=(s_\infty(2k+1),\varphi_\infty(2k+1))&\sim \|x_\infty(2k)\| \bar{v}_2^s, 
\end{align*}
with
$$
\|x_\infty(2k)\|=O(\Lambda^{-2k}).
$$
\end{lemma}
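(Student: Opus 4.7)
The plan is to identify $h_\infty$ as a homoclinic orbit of the period two orbit $(12)$ via symbolic expansivity, and then to extract its asymptotic behavior from the local stable manifold theory of $\mathcal{F}^2$ at the hyperbolic fixed points $(0_1,0)$ and $(0_2,0)$. To begin, the forward semi-orbit of $x_\infty(0)$ has symbolic coding $3(21)^\infty$, whereas the forward codings of $(0_1,0)$ and $(0_2,0)$ are $(12)^\infty$ and $(21)^\infty$ respectively. The conjugation with the subshift of finite type, together with the expansiveness of the dynamics, then forces $x_\infty(2k)\to (0_1,0)$ and $x_\infty(2k+1)\to (0_2,0)$ as $k\to+\infty$; the palindromic nature of the coding handles the negative iterates analogously, so $h_\infty$ is homoclinic to $(12)$.

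Next, I would apply the stable manifold theorem to the $C^2$ diffeomorphism $\mathcal{F}^2$ at the hyperbolic fixed point $(0_1,0)$, with stable eigenvalue $\lambda<1$ and unstable eigenvalue $\mu=\lambda^{-1}$. This yields a $C^2$ local stable manifold $W^s_{\mathrm{loc}}((0_1,0))$ tangent to $\mathrm{E}_{\mathcal{F}^2}^s(0_1,0)$ at the origin; since $x_\infty(2k)\to (0_1,0)$, there exists $k_0$ with $x_\infty(2k)\in W^s_{\mathrm{loc}}((0_1,0))$ for $k\ge k_0$. I then choose $\bar v_1^s\in\mathrm{E}_{\mathcal{F}^2}^s(0_1,0)$ as the unit vector pointing in the direction from which $x_\infty(2k)$ approaches the origin. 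Using a $C^2$ parametrization of $W^s_{\mathrm{loc}}$ by its stable coordinate, one can write $x_\infty(2k)=\alpha_k\bar v_1^s+O(\alpha_k^2)$ with $\alpha_k>0$ and $\alpha_{k+1}=\lambda\alpha_k+O(\alpha_k^2)$, so $\alpha_k=O(\lambda^k)$. Since $\Lambda^2$ is a uniform lower bound on the per-period expansion on dispersing wave fronts while $\mu$ is the expansion in the (dispersing) unstable direction, we have $\mu\ge\Lambda^2$, i.e.\ $\lambda\le\Lambda^{-2}$; this upgrades the bound to $\|x_\infty(2k)\|=O(\Lambda^{-2k})$, and the ratio $x_\infty(2k)/\|x_\infty(2k)\|\to\bar v_1^s$, which is the claimed asymptotic for even indices.

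For the odd indices, the commutation $\mathcal{F}\circ\mathcal{F}^2=\mathcal{F}^2\circ\mathcal{F}$ together with the fact that $\mathcal{F}$ sends $(0_1,0)$ to $(0_2,0)$ implies that $D_{(0_1,0)}\mathcal{F}$ maps $\mathrm{E}_{\mathcal{F}^2}^s(0_1,0)$ onto $\mathrm{E}_{\mathcal{F}^2}^s(0_2,0)$. Setting $\bar v_2^s:=D_{(0_1,0)}\mathcal{F}\cdot \bar v_1^s$ and applying $\mathcal{F}$ to the previous expansion yields
\begin{equation*}
x_\infty(2k+1)=\mathcal{F}(x_\infty(2k))=\alpha_k\bar v_2^s+O(\alpha_k^2),
\end{equation*}
which, combined with $\|x_\infty(2k)\|=\alpha_k+O(\alpha_k^2)$, gives $x_\infty(2k+1)\sim\|x_\infty(2k)\|\bar v_2^s$ in the sense of the statement.

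The main obstacle I anticipate is the verification that $x_\infty(2)$ actually belongs to the local stable manifold, not merely to the basin of the fixed point. This is standard in uniformly hyperbolic dynamics but still deserves care: any non-stable component would be expanded by $\mathcal{F}^2$ at rate $\mu$ and eventually violate the prescribed symbolic coding $(12)^\infty$; alternatively, one can extract this from the wave front arguments underlying Lemma~\ref{premier lemmmme}, which show that transverse deviations must contract uniformly. Once this point is secured, the rest reduces to standard local hyperbolic analysis at a $C^2$ saddle.
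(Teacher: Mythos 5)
Your proposal is correct in substance, but it takes a genuinely different route from the paper for the quantitative part. The paper does not invoke the stable manifold theorem at all here: it connects $A=(0_1,0)$ to $\mathcal{I}(x_\infty(2k))$ by a dispersing wave front, uses the time--reversal identity $\mathcal{F}^{2j}\circ\mathcal{I}(x_\infty(2k))=\mathcal{I}(x_\infty(2(k-j)))$ to see that all forward images of this front stay on $\Gamma_1$ with uniformly bounded $p$-length, and then reads off $\|x_\infty(2k)\|\le c_{\min}C\Lambda^{-2k}$ directly from the expansion estimate \eqref{expansion first}; the convergence of $x_\infty(2k)/\|x_\infty(2k)\|$ to a stable unit vector and the odd-index statement via $D_{(0_1,0)}\mathcal{F}$ are then asserted essentially as you do. Your route — local stable manifold at the $C^2$ saddle, a parametrization giving $\alpha_{k+1}=\lambda\alpha_k+O(\alpha_k^2)$, and the observation $\mu\ge\Lambda^2$ to recover the stated $O(\Lambda^{-2k})$ bound — is sound and in fact yields the sharper rate $O(\lambda^k)$ (which the paper only obtains later, in Corollary \ref{coro sympt hoc}, via the linearizing conjugacy $\Psi$), and it justifies the directional convergence more transparently than the paper does. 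The one point you rightly flag, that $x_\infty(2k)$ lies on $W^s_{\mathrm{loc}}((0_1,0))$ and not merely in its basin, is the crux: your first argument (``a nonzero unstable component would be expanded and eventually violate the coding'') needs care, since a priori the forward-cylinder set of the coding $(12)^\infty$ need not be confined to a small neighborhood of the fixed point, so escape from the linearization chart is not by itself a contradiction; the clean way to close this is precisely the wave-front/expansivity argument you cite as the alternative, which is the paper's mechanism (already deployed in Lemma \ref{bounded domm}) for showing that equality of forward codings forces membership in the stable set. With that gap filled by the paper's own technique, your proof is complete and arguably cleaner for the asymptotic direction, at the cost of importing the stable manifold theorem where the paper stays entirely within elementary billiard wave-front estimates.
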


\begin{proof}
As in Lemma \ref{bounded domm}, this is a consequences of  the expansivity of the dynamics of $\mathcal{F}$ and of the fact that the forward orbits of  $h_\infty$ and $(12)$ have the same symbolic coding. More precisely, for each integer $k \geq 1$,
we let $[0,s_\infty(2k)]\ni s\mapsto \varphi^{(2k)}(s)$ be a continuous monotonic map such that $\varphi^{(2k)}(0)=0$  and $\varphi^{(2k)}(s_\infty(2k))=-\varphi_\infty(2k)$,
and  we set $\mathcal{W}^{(2k)}:=\{x^{(2k)}(s):=(s,\varphi^{(2k)}(s)):0\leq s \leq s_\infty(2k)\}$.
%
%
By the strict convexity of the obstacles,   $\mathcal{W}^{(2k)}$ is a dispersing wave front,  and for each $0 \leq j \leq k-1$, the projection of $\mathcal{F}^{2j}(\mathcal{W}^{(2k)})$ on the table is the arc of $\Gamma_1$ bounded by the points $A$ and $\gamma_1(s_\infty(k-j))$. 
Indeed, $\mathcal{F}^{2j}(s_\infty(2k),-\varphi_\infty(2k))=\mathcal{F}^{2j} \circ \mathcal{I} (x_\infty(2k))=\mathcal{I}\circ \mathcal{F}^{2(k-j)}(x_\infty(0))=(s_\infty(2(k-j)),-\varphi_\infty(2(k-j)))$.
As before, we obtain
\begin{align*}
C &\geq |\mathcal{F}^{2j}(\mathcal{W}^{(2k)})|=\left|\int_{0}^{s_{\infty}(2k)} \|D_{x^{(2k)}(s)} \mathcal{F}^{2j} \cdot (x^{(2k)})'(s)\|_p\, ds\right|\\
&\geq \Lambda^{2j} \left|\int_{0}^{s_{\infty}(2k)} \|(x^{(2k)})'(s)\|_p\, ds\right|\geq \Lambda^{2j}  c_{\text{min}}^{-1}\cdot \|x^{(2k)}(s_\infty(2k))-x^{(2k)}(0)\|,
\end{align*}
and then,
\begin{equation}\label{premie est}
\|x_\infty(2k)\| \leq c_{\text{min}} C \Lambda^{-2k}.
\end{equation}
In particular, $\lim_{k \to +\infty} \|x_\infty(2k)\|=0$, and then,
$$
\lim_{k \to +\infty} \frac{x_\infty(2k)}{\|x_\infty(2k)\|}=:\bar{v}^s_1,
$$
for some unit vector $\bar{v}^s_1 \in \mathrm{E}_{\mathcal{F}^2}^s (0_1,0)$.


For any integer $k \geq 0$, we also have
\begin{align*}
x_\infty(2k+1)&=\mathcal{F}(x_\infty(2k))-\mathcal{F}(0_1,0)\\
&\sim D_{(0_1,0)} \mathcal{F} \cdot x_\infty(2k)\\ 
&\sim D_{(0_1,0)} \mathcal{F} \cdot \|x_\infty(2k)\| \bar{v}_s^1\\ 
&=\|x_\infty(2k)\|  \bar{v}_2^s,
\end{align*}
with $\bar{v}_2^s:=D_{(0_1,0)} \mathcal{F}\cdot \bar{v}_s^1\in \mathrm{E}_{\mathcal{F}^2}^s(0_2,0)$.
\end{proof}

%

\section{Improved estimates on the parameters}
\label{sec:improved-estimates}

In this part, we keep the same notations as previously, and we get improved estimates using a change of coordinates.

For $j \in \{1,2\}$, the point $(0_j,0)$ is a saddle fixed point of $\mathcal{F}^2$, with eigenvalues $\lambda=\lambda(12)<1$ and $\mu=\mu(12):=\lambda^{-1}>1$.  We denote by
\begin{align*}
\mathrm{E}_{\mathcal{F}^2}^s(0_j,0)&:=\{v\in T_{(0_j,0)} \mathcal{M}: D_{(0_j,0)} \mathcal{F}^2\cdot v=\lambda v\},\\
\mathrm{E}_{\mathcal{F}^2}^u(0_j,0)&:=\{v\in T_{(0_j,0)} \mathcal{M}: D_{(0_j,0)} \mathcal{F}^2\cdot v=\lambda^{-1} v\},
\end{align*}
the stable, resp. unstable space of $\mathcal{F}^2$ at $(0_j,0)$, 
associated to the smallest eigenvalue $\lambda<1$, resp. largest eigenvalue $\mu>1$ of $(12)$.

The rest of this section is dedicated to the proof of the following result.

\begin{prop}\label{prop sympt hoc}
There exist a real number  $\xi_\infty \in \R$ and  two vectors $v_1^s\in \mathrm{E}_{\mathcal{F}^2}^s(0_1,0)$, $v_2^s\in \mathrm{E}_{\mathcal{F}^2}^s(0_2,0)$, with $\|v_1^s\|=1$ and $v_2^s:=D_{(0_1,0)}\mathcal{F}\cdot v_1^s$, such that for each integer $k \geq 0$, it holds
\begin{align*}
x_\infty(2k)=\lambda^k \xi_\infty\cdot v_1^s+O(\lambda^{\frac{3k}{2}}),\\ 
x_\infty(2k+1)=\lambda^k \xi_\infty\cdot v_2^s+O(\lambda^{\frac{3k}{2}}). 
\end{align*}
Furthermore, there exist an integer $n_0 \geq 0$ and two vectors $v_1^u\in \mathrm{E}^u_{\mathcal{F}^2}(0_1,0)$  and $v_2^u:=D_{(0_1,0)} \mathcal{F} \cdot v_1^u \in \mathrm{E}^u_{\mathcal{F}^2}(0_2,0)$ such that for  each integer
$n \geq n_0$, and for each integer $k\in \{0,\dots,\lceil\frac{n+1}{2}\rceil\}$, it holds
\begin{align*}
x_n(2k)-x_\infty(2k)&=\lambda^{n+1-k}\xi_\infty\cdot v_1^u+O(\lambda^{n-\frac{k}{2}}),\\
x_n(2k+1)-x_\infty(2k+1)&=\lambda^{n+1-k}\xi_\infty\cdot v_2^u+O(\lambda^{n-\frac{k}{2}}).
\end{align*}
\end{prop}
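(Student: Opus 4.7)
The plan is to set up a partial smooth linearization of $\mathcal{F}^2$ near each saddle fixed point $(0_j,0)$. By the stable manifold theorem, the local stable and unstable manifolds $W^s, W^u$ at $(0_1,0)$ are smooth embedded curves; I will first straighten them to the coordinate axes by a smooth chart $(s,\varphi)\mapsto(\xi,\eta)$, and then apply Koenigs linearization to the one-dimensional restrictions $\mathcal{F}^2|_{W^s}\colon \xi\mapsto \lambda\xi+O(\xi^2)$ and $\mathcal{F}^2|_{W^u}\colon \eta\mapsto \mu\eta+O(\eta^2)$. This produces coordinates in which
\begin{equation*}
  \mathcal{F}^2(\xi,\eta)=\bigl(\lambda\xi+\xi\eta\,\phi(\xi,\eta),\ \mu\eta+\xi\eta\,\psi(\xi,\eta)\bigr)
\end{equation*}
for smooth $\phi,\psi$; the factorization through $\xi\eta$ encodes the invariance of the two axes together with the exact linearity of $\mathcal{F}^2$ on them.

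\textbf{First estimate.} Since $h_\infty$ converges to $(0_1,0)$ along $W^s$, for $k$ sufficiently large $x_\infty(2k)$ lies in the chart with $\eta=0$, and by the linearization $\xi(x_\infty(2k))=\lambda^{k-k_0}\xi_0$ exactly. Pushing back to $(s,\varphi)$ via Taylor expansion of the inverse chart $\Psi$ yields $x_\infty(2k)=\lambda^{k-k_0}\xi_0\,\partial_\xi\Psi(0,0)+O(\lambda^{2k})$. Normalizing $v_1^s:=\partial_\xi\Psi(0,0)/\|\partial_\xi\Psi(0,0)\|\in \mathrm{E}^s_{\mathcal{F}^2}(0_1,0)$ and absorbing the remaining scalars into $\xi_\infty$ gives the claimed formula with error $O(\lambda^{2k})\subseteq O(\lambda^{3k/2})$. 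For odd iterates, apply $\mathcal{F}$ and Taylor-expand at $(0_1,0)$; this yields $v_2^s:=D_{(0_1,0)}\mathcal{F}\cdot v_1^s$, which lies in $\mathrm{E}^s_{\mathcal{F}^2}(0_2,0)$ by the chain-rule identity $D_{(0_2,0)}\mathcal{F}^2\circ D_{(0_1,0)}\mathcal{F}=D_{(0_1,0)}\mathcal{F}\circ D_{(0_1,0)}\mathcal{F}^2$ applied to $v_1^s$.

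\textbf{Second estimate.} Consider $n$ odd (the case $n$ even is treated analogously using the saddle $(0_2,0)$), and set $m:=(n+1)/2$. By Lemma~\ref{lemma palind}, $x_n(2m)$ satisfies $\varphi=0$, so in the chart at $(0_1,0)$ it lies on a smooth curve $\{\eta=u(\xi)\}$ corresponding to $\{\varphi=0\}$, with $u(0)=0$ and $u'(0)\neq 0$ (transversality to the two eigenspaces, checkable from the explicit matrix of $D_{(0_1,0)}\mathcal{F}^2$). Writing $x_n(2k)=(\xi_n^{(k)},\eta_n^{(k)})$ and iterating the partial linearization from a large fixed $k_0$ up to $m$, then substituting into $\eta_n^{(m)}=u'(0)\xi_n^{(m)}+O((\xi_n^{(m)})^2)$ together with $\xi_n^{(k_0)}=\xi_\infty^{(k_0)}+o(1)$ (from Lemmas~\ref{premier lemmmme} and~\ref{homcolincc}); at leading order this forces $\eta_n^{(k_0)}\sim u'(0)\lambda^{n+1-2k_0}\xi_\infty^{(k_0)}$. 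Propagating backwards from step $k_0$ to arbitrary $k$ using smoothness of $\mathcal{F}^{-2}$ and the hyperbolic splitting along $h_\infty$ (the unstable component contracts by $\lambda$ per backward $\mathcal{F}^2$-step, while stable-direction corrections remain subleading), the deviation $\delta_n^{(k)}:=x_n(2k)-x_\infty(2k)$ takes the form $\lambda^{n+1-k}\xi_\infty v_1^u+O(\lambda^{n-k/2})$, where $v_1^u\in \mathrm{E}^u_{\mathcal{F}^2}(0_1,0)$ is the scalar multiple of the unit unstable tangent absorbing $u'(0)$ and the remaining Jacobian factors, chosen so that the coefficient matches the $\xi_\infty$ of the first estimate. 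Applying $D_{(0_1,0)}\mathcal{F}$ yields the analogous statement for odd iterates, with $v_2^u:=D_{(0_1,0)}\mathcal{F}\cdot v_1^u\in \mathrm{E}^u_{\mathcal{F}^2}(0_2,0)$.

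\textbf{Main obstacle.} The most delicate step is the shooting argument above: tracking both the stable and unstable components of $\delta_n^{(k)}$ through the order of $m$ iterations near the saddle, controlling the accumulated nonlinear corrections $\xi\eta\phi,\xi\eta\psi$, and above all matching the constant in front of $\lambda^{n+1-k}v_1^u$ with the $\xi_\infty$ from the first estimate. This compatibility reflects the fact that both the forward leg and (through the palindromic symmetry) the backward leg of $h_n$ are controlled by the same homoclinic structure, and it is precisely where the transversality of $\{\varphi=0\}$ to both eigenspaces plays an essential role.
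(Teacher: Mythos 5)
Your proposal follows the same two-step strategy as the paper: linearize $\mathcal{F}^2$ near the saddle $(0_1,0)$, use the convergence of $h_\infty$ along $W^s$ for the first estimate, and exploit the palindromic symmetry of $h_n$ to pin down the unstable component of $x_n(2k)-x_\infty(2k)$ for the second. The implementation differs in two places, both legitimate. First, the paper invokes a \emph{full} $C^{1,\frac12}$ linearization (Lemma 23 of \cite{HKS}), whereas you build a \emph{partial} Sternberg-type normal form (axes invariant, dynamics exactly linear on them, cross terms factoring through $\xi\eta$). Second, where the paper conjugates the time-reversal involution $\mathcal{I}$ into the chart, identifies its linear part as the anti-diagonal map $L_2\colon(\xi,\eta)\mapsto(\alpha^{-1}\eta,\alpha\xi)$ (Lemma \ref{carac ldeux}), and derives the relation $\eta_n=\alpha\lambda^{n+1}\xi_n$ from the identity $\mathcal{I}(x_n(0))=x_n(0)$ together with periodicity (Lemma \ref{lemma eaxxu}), you instead impose the single boundary condition $\varphi_n(n+1)=0$ at the turning point and shoot: the curve $\{\varphi=0\}$ is exactly the fixed-point set of $\mathcal{I}$, and your slope $u'(0)$ is the paper's $\alpha$, so the two computations agree, with $\eta_n^{(k)}\sim u'(0)\lambda^{n+1-k}\xi_\infty$ matching $\alpha\lambda^{n+1-k}\xi_\infty$. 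Your shooting argument is arguably more transparent; the paper's version additionally yields the relation $\xi_n=\xi_\infty(1+\lambda^{n+1})+O(\lambda^{5n/4})$, which is used later in Section \ref{sec:consec-MLS}, but is not needed for the proposition itself (the stable-direction deviation is subleading, as you note).

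One caveat you should address: the regularity you assume for the normal form is more than is available. The boundary is only $C^3$, so $\mathcal{F}$ is $C^2$, the straightening of $W^s,W^u$ and the Koenigs linearizations are only $C^{1,\beta}$, and in a $C^{1,\beta}$ chart the cross terms cannot be written as $\xi\eta\,\phi(\xi,\eta)$ with $\phi$ continuous; one only gets bounds of the type $O(|\xi|^{\beta}|\eta|+|\eta|^{\beta}|\xi|)$. Consequently your claimed error $O(\lambda^{2k})$ in the first estimate is not justified; with $\beta=\tfrac12$ (which is what the $C^{1,\frac12}$ theory of \cite{HKS} provides, and what the paper uses) one obtains exactly the $O(\lambda^{3k/2})$ and $O(\lambda^{n-k/2})$ errors in the statement, and a check shows that the accumulated nonlinear corrections along the $O(n)$ passes near the saddle (where $|\xi\eta|\approx\lambda^{n+1}$ is essentially constant) remain within these error terms. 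So the argument closes, but only after downgrading the normal form to its Hölder version and re-deriving the error exponents accordingly.
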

The proof of this proposition is given in Corollary \ref{coro sympt hoc} and Corollary \ref{main crororor} below.

\subsection{The linearization near a saddle fixed point}\label{section linerasition}

The points $(0_1,0)$ and $(0_2,0)$ are saddle fixed points of the square $\mathcal{F}^2$ of the billiard map. For $n\geq 0$ large enough, and for  each sufficiently large integer $k\in \{0,\dots,n+1\}$, the point $x_n(k)$ is in a neighborhood of one of those two points. To obtain the estimates of Proposition \ref{prop sympt hoc}, we use a change of coordinates as follows to linearize the dynamics.

By Lemma 23  in \cite{HKS}, for any $\varepsilon>0$,
there exist a neighborhood $\mathcal{U}$ of $(0_1,0)$ in the $(s,\varphi)$-plane, a neighborhood $\mathcal{V} \subset \R^2$ of $(0,0)$, and a $C^{1,\frac 12}$-diffeomorphism
$$
\Phi\colon \left\{
\begin{array}{rcl}
\mathcal{U} & \to & \mathcal{V},\\
(s,\varphi) & \mapsto & (\xi,\eta), 
\end{array}
\right.
$$
such that
\begin{equation*}
\Phi \circ \mathcal{F}^2 \circ \Phi^{-1} = D_{(0_1,0)} \mathcal{F}^2,\qquad \|\Phi-\mathrm{id}\|_{C^1} \leq \varepsilon, \qquad \|\Phi^{-1}-\mathrm{id}\|_{C^1} \leq \varepsilon,
\end{equation*}
and
\begin{equation*}
\Phi(z)-\Phi(z')=z-z'+O(\max(|z|^{\frac 12},|z'|^{\frac 12})|z-z'|).
\end{equation*}


Let $L_1=L_1(\mathcal{F}) \in \mathrm{SL}(2,\R)$ be a linear isomorphism such that $v_1^s:=L_1^{-1}(1,0)\in \mathrm{E}_{\mathcal{F}^2}^s(0_1,0)$, with $\|v_1^s\|=1$, and $L_1^{-1}(0,1)\in \mathrm{E}_{\mathcal{F}^2}^u(0_1,0)$.
By considering $\Psi:=L_1\circ \Phi$, with $\Phi$ as above, we deduce that for any $\varepsilon>0$,
there exist a neighborhood $\mathcal{U}$ of $(0_1,0)$, a neighborhood $\mathcal{V}$ of $(0,0)$, and a $C^{1,\frac 12}$-diffeomorphism $\Psi\colon \mathcal{U} \to \mathcal{V}$, such that
\begin{equation}\label{fseccc conjuugg}
\Psi \circ \mathcal{F}^2 \circ \Psi^{-1} = D_\lambda, \qquad \|\Psi-L_1\|_{C^1} \leq \varepsilon, \qquad \|\Psi^{-1}-L_1^{-1}\|_{C^1} \leq \varepsilon,
\end{equation}
and
\begin{align}\label{contole non}
\Psi(z)-\Psi(z')&=L_1(z-z')+O(\max(|z|^{\frac 12},|z'|^{\frac 12})|z-z'|),\\
\Psi^{-1}(z)-\Psi^{-1}(z')&=L_1^{-1}(z-z')+O(\max(|z|^{\frac 12},|z'|^{\frac 12})|z-z'|),\nonumber
\end{align}
where $D_\lambda:=\mathrm{diag}(\lambda,\lambda^{-1})\colon (\xi,\eta)\mapsto (\lambda\xi,\lambda^{-1} \eta)$. Recall that $\mathcal{I} \colon (s,\varphi)\mapsto (s,-\varphi)$. Set $\mathcal{I}^*:=\Psi \circ \mathcal{I} \circ \Psi^{-1}$, and let $L_2=L_2(\mathcal{F}) \in \mathrm{SL}(2,\R)$ be the linear isomorphism (in fact, a reflection)
\begin{equation}\label{definition llll2}
L_2:=L_1 \circ \mathcal{I} \circ L_1^{-1}.
\end{equation}
Note that $\mathcal{I}^*(0,0)=(0,0)$. Then, we also have
\begin{equation}\label{contole non bisbi}
\mathcal{I}^*(z)-\mathcal{I}^*(z')=L_2(z-z')+O(\max(|z|^{\frac 12},|z'|^{\frac 12})|z-z'|).
\end{equation}
In particular, 
$L_2$ is the linear part of $\mathcal{I}^*$ at the point $(0,0)$, i.e., $D_{(0,0)} \mathcal{I}^*=L_2$, and $\mathcal{I}^*(z)=L_2(z)+O(|z|^{\frac{3}{2}})$.\\

By Corollary \ref{homcolincc} and Lemma \ref{coroooooooo}, there exist $k_0,n_0 \geq 0$ such that for $n \geq n_0$, and for all $k \in \{k_0,\dots,n+1-k_0\}$, the point $x_n(2k)$ is in the neighborhood $\mathcal{U}$ of $(0_1,0)$. We denote by $(\xi_{n}(2k),\eta_{n}(2k))$ the coordinates of the point $\Psi(x_{n}(2k))$.
It is possible to extend the system of coordinates given by $\Psi$ to a neighborhood of the separatrices as follows: for $k\in \{0,\dots,k_0-1\}$, we let
$$
\Psi(x_n(2k))=(\xi_{n}(2k),\eta_{n}(2k)):=(\lambda^{k-k_0}\xi_{n}(2k_0),\lambda^{k_0-k}\eta_{n}(2k_0)),
$$
and analogously for $k\in \{n+1-k_0+1,\dots,n\}$.
Let us abbreviate
$$
(\xi_{n},\eta_{n}):=(\xi_{n}(0),\eta_{n}(0))=\Psi(x_n(0)),
$$
so that
$$
(\xi_{n}(2k),\eta_{n}(2k))=(\lambda^{k} \xi_n, \lambda^{-k} \eta_n),\quad \forall\, k \in \{0,\dots,n\}.
$$

By Lemma \ref{homcolincc}, the points $(x_\infty(k))_{k \geq 0}$ are on the stable manifold of $(0_1,0)$. Therefore, their images by $\Psi$ are on the  stable manifold  of the origin, which is here the horizontal axis $\{\eta=0\}$.
In our extended
system of coordinates, we then have
$$
\Psi(x_\infty(2k))=(\xi_\infty(2k),0)=(\lambda^k \xi_\infty,0), \quad \forall\, k \geq 0,
$$
for some $\xi_\infty \in \R$.

\subsection{Proof of Proposition \ref{prop sympt hoc}}\label{sec:prove-prop}

In this part, we keep the same notations as before.
Thanks to the above conjugacy, we can improve the estimates shown in Lemma \ref{homcolincc}.
\begin{corollary}\label{coro sympt hoc}
Let $v_1^s=L_1^{-1} (1,0)\in \mathrm{E}_{\mathcal{F}^2}^s(0_1,0)$ be as above, and set $v_2^s:=D_{(0_1,0)}\mathcal{F}\cdot v_1^s\in \mathrm{E}_{\mathcal{F}^2}^s(0_2,0)$. Then, we have the estimates
\begin{align*}
x_\infty(2k)=\lambda^k \xi_\infty\cdot v_1^s+O(\lambda^{\frac{3k}{2}}),\\ 
x_\infty(2k+1)=\lambda^k \xi_\infty\cdot v_2^s+O(\lambda^{\frac{3k}{2}}). 
\end{align*}
For $j =1,2$, we also have $v_j^s =\bar{v}_j^s$,  with $\bar{v}_j^s$ as in Lemma \ref{homcolincc}.
\end{corollary}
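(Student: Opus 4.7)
The plan is to upgrade the qualitative convergence $\|x_\infty(2k)\|=O(\Lambda^{-2k})$ of Lemma~\ref{homcolincc} into a precise first-order asymptotic by running the argument inside the $C^{1,1/2}$ linearizing chart $\Psi$ introduced in Subsection~\ref{section linerasition}. Three steps, then a uniqueness-of-direction observation.

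First I would handle the even-indexed points. The key observation, already built into the extended coordinates, is that by Lemma~\ref{homcolincc} the orbit $(x_\infty(k))_k$ sits on the local stable manifold of $(0_1,0)$, so its image under $\Psi$ lies on the stable axis $\{\eta=0\}$ of the linearization. Hence $\Psi(x_\infty(2k))=(\lambda^k \xi_\infty,0)$ for every $k\geq k_0$. Applying $\Psi^{-1}$ and invoking the H\"older control \eqref{contole non} with $z=(\lambda^k\xi_\infty,0)$, $z'=0$, together with $\Psi^{-1}(0)=(0_1,0)$, one gets
\begin{align*}
x_\infty(2k) &= L_1^{-1}(\lambda^k\xi_\infty,0)+O\bigl(|\lambda^k\xi_\infty|^{1/2}\cdot |\lambda^k\xi_\infty|\bigr)\\
&= \lambda^k\xi_\infty\cdot v_1^s + O(\lambda^{3k/2}),
\end{align*}
by the definition $v_1^s=L_1^{-1}(1,0)$. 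For the finitely many $k<k_0$ the asserted bound holds trivially after adjusting the implicit constant. I expect the main technical point to confirm here to be that the $C^{1,1/2}$ smoothness of $\Psi^{-1}$ (Lemma~23 of \cite{HKS}), and not merely $C^1$ differentiability, is exactly what yields the $k/2$ gain in the exponent: a bare Hartman--Grobman type linearization would only give $o(\lambda^k)$.

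For the odd-indexed points I would simply iterate once: writing $x_\infty(2k+1)=\mathcal{F}(x_\infty(2k))$ and Taylor-expanding $\mathcal{F}$ to second order at the saddle $(0_1,0)$ (legitimate since $C^3$ regularity of the obstacles forces $\mathcal{F}\in C^2$), one obtains
\begin{align*}
x_\infty(2k+1)
&= D_{(0_1,0)}\mathcal{F}\cdot x_\infty(2k)+O(\|x_\infty(2k)\|^2)\\
&= \lambda^k\xi_\infty\cdot v_2^s + O(\lambda^{3k/2}),
\end{align*}
using $v_2^s:=D_{(0_1,0)}\mathcal{F}\cdot v_1^s$ and absorbing $O(\lambda^{2k})$ into $O(\lambda^{3k/2})$.

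Finally, the identification $v_j^s=\bar v_j^s$ follows by uniqueness of the limiting direction. Note first that $\xi_\infty\ne 0$, for otherwise the first display would give $x_\infty(2k_0)=(0_1,0)$, contradicting the symbolic coding of $h_\infty$. The asymptotic then forces $x_\infty(2k)/\|x_\infty(2k)\|\to \mathrm{sign}(\xi_\infty)\,v_1^s$; comparing with Lemma~\ref{homcolincc} identifies this limit with $\bar v_1^s$, so $v_1^s=\bar v_1^s$ after absorbing the sign into the orientation freedom of $L_1$. The case $j=2$ then follows automatically from the common definition of both vectors as $D_{(0_1,0)}\mathcal{F}$ applied to the $j=1$ vector.
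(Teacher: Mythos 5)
Your proposal is correct and follows essentially the same route as the paper: pull back the exact linear dynamics on the stable axis through $\Psi^{-1}$ using the H\"older estimate \eqref{contole non} to get the even case, apply one step of $\mathcal{F}$ with a second-order Taylor expansion for the odd case, and identify $v_1^s=\bar v_1^s$ by comparing normalized limits with Lemma~\ref{homcolincc}. Your added remarks on $\xi_\infty\neq 0$ and the sign normalization of $L_1$ are slightly more careful than the paper's one-line identification, but the argument is the same.
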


\begin{proof}
By \eqref{contole non}, for each $k \geq 0$, we have
\begin{align*}
x_\infty(2k)&=x_\infty(2k)-(0_1,0)=\Psi^{-1} (\xi(2k),0)-\Psi^{-1}(0,0)\\
&=\Psi^{-1}(\lambda^k \xi_\infty,0)-\Psi^{-1}(0,0)\\
&=\lambda^k  \xi_\infty \cdot L_1^{-1}  (1,0)+O(\lambda^{\frac{3k}{2}}).
\end{align*}
By comparing with the estimates of  Lemma \ref{homcolincc}, and since $v_1^s=L_1^{-1}(1,0)$ is taken to be of norm one, we see that
$v_1^s=\bar{v}_1^s$.  

Then, for odd integers,  in the same way as before, we obtain
\begin{align*}
x_\infty(2k+1)&=\mathcal{F}(x_\infty(2k))-\mathcal{F}(0_1,0)
= D_{(0_1,0)} \mathcal{F} \cdot x_\infty(2k)+O(\lambda^{2k})\\
&= \lambda^k \xi_\infty\, D_{(0_1,0)} \mathcal{F} \cdot v_1^s+O(\lambda^{\frac{3k}{2}}),
\end{align*}
which concludes.
\end{proof}

Recall that $\mathcal{I}\colon (s,\varphi)\mapsto (s,-\varphi)$, and that $\mathcal{I} \circ \mathcal{F} \circ \mathcal{I} = \mathcal{F}^{-1}$. Besides, we let $\mathcal{I}^*:=\Psi \circ \mathcal{I} \circ \Psi^{-1}$, and we let $L_2=L_2(\mathcal{F}) :=L_1 \circ \mathcal{I} \circ L_1^{-1}\in \mathrm{SL}(2,\R)$ be as in \eqref{definition llll2}.

\begin{lemma}\label{carac ldeux}
For some $\alpha \in \R^*$, we have
$$
L_2 (\xi,\eta)=(\alpha^{-1}\eta,\alpha\xi),\qquad \forall\, (\xi,\eta)\in \R^2.
$$
\end{lemma}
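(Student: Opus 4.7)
The plan is to exploit the time-reversal identity $\mathcal{I}\circ\mathcal{F}\circ\mathcal{I}=\mathcal{F}^{-1}$ to show that $\mathcal{I}$ swaps the stable and unstable eigenspaces of $D_{(0_1,0)}\mathcal{F}^2$, and then translate this statement through the conjugating map $L_1$ into the $(\xi,\eta)$-coordinates.

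First, observe that $\mathcal{I}$ is linear (it is simply the reflection $(s,\varphi)\mapsto (s,-\varphi)$) and fixes the point $(0_1,0)$. Squaring the time-reversal identity gives $\mathcal{I}\circ\mathcal{F}^2\circ\mathcal{I}=\mathcal{F}^{-2}$, and differentiating at the fixed point $(0_1,0)$ yields
\begin{equation*}
\mathcal{I}\circ D_{(0_1,0)}\mathcal{F}^2\circ\mathcal{I}=\bigl(D_{(0_1,0)}\mathcal{F}^2\bigr)^{-1}.
\end{equation*}
Hence if $v\in \mathrm{E}^s_{\mathcal{F}^2}(0_1,0)$ is an eigenvector with eigenvalue $\lambda$, then $\mathcal{I}v$ is an eigenvector with eigenvalue $\lambda^{-1}$, i.e., $\mathcal{I}\bigl(\mathrm{E}^s_{\mathcal{F}^2}(0_1,0)\bigr)=\mathrm{E}^u_{\mathcal{F}^2}(0_1,0)$, and symmetrically for the unstable space.

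By construction, $L_1$ maps $\mathrm{E}^s_{\mathcal{F}^2}(0_1,0)$ to the $\xi$-axis $\R\cdot(1,0)$ and $\mathrm{E}^u_{\mathcal{F}^2}(0_1,0)$ to the $\eta$-axis $\R\cdot(0,1)$. Consequently, the map $L_2=L_1\circ\mathcal{I}\circ L_1^{-1}$ sends $(1,0)$ to a vector on the $\eta$-axis and $(0,1)$ to a vector on the $\xi$-axis. There exist therefore $\alpha,\beta\in\R$ such that $L_2(1,0)=(0,\alpha)$ and $L_2(0,1)=(\beta,0)$, i.e., $L_2(\xi,\eta)=(\beta\eta,\alpha\xi)$.

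Finally, since $\mathcal{I}$ is a reflection, $\det \mathcal{I}=-1$, hence $\det L_2=\det\mathcal{I}=-1$. Computing the determinant from the matrix form gives $-\alpha\beta=-1$, so $\alpha\beta=1$; in particular $\alpha\neq 0$ and $\beta=\alpha^{-1}$, which is exactly the desired conclusion $L_2(\xi,\eta)=(\alpha^{-1}\eta,\alpha\xi)$ with $\alpha\in\R^{*}$. The only minor subtlety is the sign of the determinant (the lemma's preamble refers to $L_2$ as a reflection, so the $-1$ normalization is the correct one); all other steps are direct.
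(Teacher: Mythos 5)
Your proof is correct, and it reaches the conclusion by a genuinely more elementary route than the paper. The paper works inside the linearizing chart: it sets $\mathcal{I}^*:=\Psi\circ\mathcal{I}\circ\Psi^{-1}$, derives $\mathcal{I}^*\circ D_\lambda\circ\mathcal{I}^*=D_\lambda^{-1}$ from \eqref{fseccc conjuugg}, extracts the exact linear identity $L_2\circ D_\lambda\circ L_2=D_\lambda^{-1}$ by identifying linear parts modulo the $O(|z|^{3/2})$ error of \eqref{contole non bisbi}, and then invokes the fact that the $\mathrm{SL}(2,\R)$-centralizer of $D_\lambda$ is the diagonal subgroup to conclude that $\mathcal{I}_0\circ L_2$ is diagonal, hence $L_2$ antidiagonal. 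You instead observe that $\mathcal{I}$ is already linear in the $(s,\varphi)$-coordinates and fixes $(0_1,0)$, so differentiating the time-reversal identity gives the exact relation $\mathcal{I}\circ D_{(0_1,0)}\mathcal{F}^2\circ\mathcal{I}=\bigl(D_{(0_1,0)}\mathcal{F}^2\bigr)^{-1}$ with no error terms to discard; this shows $\mathcal{I}$ swaps $\mathrm{E}^s_{\mathcal{F}^2}(0_1,0)$ and $\mathrm{E}^u_{\mathcal{F}^2}(0_1,0)$, whence $L_2=L_1\circ\mathcal{I}\circ L_1^{-1}$ swaps the coordinate axes, and the normalization $\det L_2=\det\mathcal{I}=-1$ forces $\beta=\alpha^{-1}$. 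The two arguments are morally the same (a linear map conjugating $D_\lambda$ to $D_\lambda^{-1}$ must exchange the eigenlines, and the determinant pins down the off-diagonal entries up to the single parameter $\alpha$), but yours avoids both the $C^{1,1/2}$ conjugacy bookkeeping and the centralizer computation. One small remark in your favor: your determinant step makes explicit that $\det L_2=-1$, which is consistent with the paper's parenthetical ``in fact, a reflection'' but not with its (slightly sloppy) assertion that $L_2\in\mathrm{SL}(2,\R)$; the paper's own proof silently corrects for this by passing to $\mathcal{I}_0\circ L_2$, which does have determinant $1$. The only reason to prefer the paper's detour through $\mathcal{I}^*$ is that the estimate \eqref{contole non bisbi} on $\mathcal{I}^*$ is needed again in Lemma \ref{lemma eaxxu}, so the paper gets it for free here; for the statement of Lemma \ref{carac ldeux} itself your argument is complete.
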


\begin{proof}
We have $\mathcal{I} \circ \mathcal{F}^2 \circ \mathcal{I}=\mathcal{F}^{-2}$. By \eqref{fseccc conjuugg} and the definition of $\mathcal{I}^*$, we thus obtain $\mathcal{I}^* \circ D_\lambda \circ \mathcal{I}^*=D_\lambda^{-1}$. Then, by \eqref{contole non bisbi}, and since $\mathcal{I}^*(0,0)=(0,0)$,  we deduce that $L_2 \circ  D_\lambda \circ L_2(z)=D_\lambda^{-1}(z)+O(|z|^{\frac{3}{2}})$, for all $z \in \R^2$. But the maps $L_2=L_1 \circ \mathcal{I} \circ L_1^{-1}\in \mathrm{SL}(2,\R)$ and $D_\lambda$ are linear. Therefore, by identifying the linear parts, we get $L_2 \circ  D_\lambda \circ L_2=D_\lambda^{-1}$. Now, we also have $\mathcal{I}_0 \circ  D_\lambda\circ \mathcal{I}_0=D_\lambda^{-1}$, with $\mathcal{I}_0\colon(\xi,\eta)\mapsto(\eta,\xi)$, hence $\mathcal{I}_0 \circ L_2 \in \mathrm{SL}(2,\R)$ commutes with the diagonal matrix $D_\lambda$. Since the $\mathrm{SL}(2,\R)$-centralizer of $D_\lambda$ is reduced to the subset of diagonal matrices in $\mathrm{SL}(2,\R)$, we conclude that $L_2 = \mathcal{I}_0 \circ D_\alpha\colon (\xi,\eta)\mapsto (\alpha^{-1}\eta,\alpha \xi)$, for some $\alpha \in \R^*$.
\end{proof}

\begin{lemma}\label{lemma eaxxu}
Let $n \geq n_0$. 
It holds
\begin{equation}\label{etan xin}
\eta_n=\alpha \lambda^{n+1} \xi_n+O(\lambda^{\frac{5n}{4}}),\qquad \xi_n=\xi_\infty(1+\lambda^{n+1})+O(\lambda^{\frac{5n}{4}}).
\end{equation}
Then, for any $k \in\{0,\dots,n+1\}$, we have
\begin{equation}\label{rel bizar}
\Psi(x_n(2k))=(\lambda^k\xi_n,\lambda^{-k}\eta_n)=\xi_\infty(\lambda^{k}+\lambda^{n+1+k},\alpha\lambda^{n+1-k})+ O(\lambda^{\frac{5n}{4}-k}).
\end{equation}
\end{lemma}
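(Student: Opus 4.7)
The plan is to establish the two estimates in \eqref{etan xin} separately, then deduce \eqref{rel bizar} by substitution.

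For the relation $\eta_n = \alpha \lambda^{n+1}\xi_n + O(\lambda^{5n/4})$, apply $\Psi$ to the palindromic identity $x_n(2(n+1-k)) = \mathcal{I}(x_n(2k))$ from Lemma~\ref{lemma palind}; in the extended coordinates this reads
\[
(\lambda^{n+1-k}\xi_n,\ \lambda^{-(n+1-k)}\eta_n) = \mathcal{I}^*(\lambda^k \xi_n,\ \lambda^{-k}\eta_n).
\]
Expanding $\mathcal{I}^*$ through \eqref{contole non bisbi} and using $L_2(\xi,\eta) = (\alpha^{-1}\eta, \alpha\xi)$ from Lemma~\ref{carac ldeux}, the first component yields $\lambda^{n+1-k}\xi_n - \alpha^{-1}\lambda^{-k}\eta_n = O(|\Psi(x_n(2k))|^{3/2})$. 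Given the expected scaling $\eta_n \sim \lambda^{n+1}$, the norm $|\Psi(x_n(2k))|^2 = \lambda^{2k}\xi_n^2 + \lambda^{-2k}\eta_n^2$ is minimized at $k \approx (n+1)/2$, where it is $O(\lambda^{n+1})$. Choosing $k = \lfloor (n+1)/2\rfloor$---which lies in $\{k_0,\dots,n+1-k_0\}$ for $n$ large---and rearranging yields $\eta_n = \alpha\lambda^{n+1}\xi_n + O(\lambda^{5(n+1)/4})$.

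For the estimate $\xi_n = \xi_\infty(1 + \lambda^{n+1}) + O(\lambda^{5n/4})$, introduce the map $\Theta\colon s \mapsto \Psi(\mathcal{F}^{2k_0}(s, 0))$, which satisfies $\Theta(s_\infty(0)) = (\lambda^{k_0}\xi_\infty, 0)$ and $\Theta(s_n(0)) = (\lambda^{k_0}\xi_n, \lambda^{-k_0}\eta_n)$. The $C^{1,\frac{1}{2}}$-regularity of $\Psi$ yields $\Theta(s_n(0)) - \Theta(s_\infty(0)) = \Theta'(s_\infty(0))\delta_n + O(|\delta_n|^{3/2})$ with $\delta_n := s_n(0) - s_\infty(0)$. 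Decomposing $(1,0) = a v_s(x_\infty(0)) + b v_u(x_\infty(0))$ in the stable/unstable directions at $x_\infty(0)$ (defined by pulling back the $\Psi$-chart axes through the dynamics, so that $D\mathcal{F}^2 (v_s(x)) = \lambda v_s(\mathcal{F}^2(x))$ and dually for $v_u$), one computes $\Theta'(s_\infty(0)) = (a\lambda^{k_0}, b\lambda^{-k_0})$. Matching components gives $\xi_n - \xi_\infty = (a/b)\eta_n + O(\lambda^{3(n+1)/2})$. The crucial identity $b/a = \alpha$ follows from $D\mathcal{I}(1,0) = (1,0)$ (since $\mathcal{I}$ is the reflection $(s,\varphi)\mapsto(s,-\varphi)$) combined with $\mathcal{I}(v_s(x_\infty(0))) = \alpha v_u(x_\infty(0))$, the latter being obtained by propagating the linear identity $L_2(1,0) = (0,\alpha)$ from the saddle $(0_1, 0)$ back to $x_\infty(0)$ through the dynamics. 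Substituting $\eta_n = \alpha\lambda^{n+1}\xi_n + O(\lambda^{5(n+1)/4})$ into $\xi_n - \xi_\infty = \eta_n/\alpha + O(\lambda^{3(n+1)/2})$ yields $\xi_n(1 - \lambda^{n+1}) = \xi_\infty + O(\lambda^{5(n+1)/4})$, which expands into the desired estimate via the geometric series. Equation~\eqref{rel bizar} then follows by direct substitution into $\Psi(x_n(2k)) = (\lambda^k \xi_n, \lambda^{-k}\eta_n)$.

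The main obstacle is establishing the exact identity $\mathcal{I}(v_s(x_\infty(0))) = \alpha v_u(x_\infty(0))$ at the homoclinic point $x_\infty(0)$, which lies outside the linearization chart. This requires propagating the linear action of $\mathcal{I}^*$ at the origin to $x_\infty(0)$ through arbitrarily long orbit segments of $\mathcal{F}^2$, and verifying that the $C^{1,\frac{1}{2}}$ errors at the intermediate points $\Psi(x_\infty(2k))$ (of size $O(\lambda^{k/2})$) are compensated by the stable/unstable rescalings $\lambda^{\pm k}$ so as to cancel in the limit $k \to \infty$.
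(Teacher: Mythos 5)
Your derivation of the first estimate in \eqref{etan xin} is exactly the paper's: apply the palindromic identity $x_n(2n+2-2k)=\mathcal{I}(x_n(2k))$ in the $\Psi$-chart, expand $\mathcal{I}^*$ via \eqref{contole non bisbi} and Lemma~\ref{carac ldeux}, and optimize the error by taking $k\approx n/2$. That part is fine, as is the final substitution giving \eqref{rel bizar}.

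For the second estimate, however, your argument has a genuine gap: everything hinges on the identity $b/a=\alpha$ (equivalently $\mathcal{I}(v_s(x_\infty(0)))=\alpha\, v_u(x_\infty(0))$), which you do not prove — you yourself flag it as ``the main obstacle'' and only sketch a propagation of the linear action of $\mathcal{I}^*$ from the saddle to $x_\infty(0)$ along arbitrarily long orbit segments, with $C^{1,\frac12}$ errors that would need to be shown to cancel against the rescalings $\lambda^{\pm k}$. As written, the proof is incomplete at its pivotal step, and the proposed route is considerably more delicate than necessary (note also that the image curve $\Theta(\{\varphi=0\})$ is \emph{not} pointwise fixed by $\mathcal{I}^*$, since $\mathcal{I}\circ\mathcal{F}^{2k_0}=\mathcal{F}^{-2k_0}\circ\mathcal{I}$, so the eigenvector identification is not immediate). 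The paper bypasses all of this: since $\varphi_n(0)=\varphi_\infty(0)=0$, both $x_n(0)$ and $x_\infty(0)$ are fixed by $\mathcal{I}$, hence $\mathcal{I}^*$ fixes $(\xi_n,\eta_n)$ and $(\xi_\infty,0)$ in the extended coordinates; applying \eqref{contole non bisbi} to this pair and using $L_2(\xi,\eta)=(\alpha^{-1}\eta,\alpha\xi)$ gives
\[
(\alpha^{-1}\eta_n,\ \alpha(\xi_n-\xi_\infty))=(\xi_n-\xi_\infty,\ \eta_n)+O\bigl(\|(\xi_n-\xi_\infty,\eta_n)\|^{\frac32}\bigr),
\]
whose first component is $\alpha^{-1}\eta_n=\xi_n-\xi_\infty+O(\lambda^{\frac{3(n+1)}{2}})$; combined with the first estimate this yields $\xi_n(1-\lambda^{n+1})=\xi_\infty+O(\lambda^{\frac{5n}{4}})$ with no decomposition of $(1,0)$ and no propagation. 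One subtlety you would also need to address on either route: the raw error in \eqref{contole non bisbi} is $O(\max(|z|^{\frac12},|z'|^{\frac12})|z-z'|)$, which here is only $O(\lambda^{n+1})$ since $|z|,|z'|\asymp|\xi_\infty|$; the improvement to $O(\|z-z'\|^{\frac32})$ uses that the differential of $\mathcal{I}^*$ at points of the separatrix equals $L_2$ exactly, by the construction of the extended coordinates.
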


\begin{proof}
Let $n \geq n_0$, and let $k \in\{0,\dots,n+1\}$. We obtain successively
\begin{align*}
(\lambda^{n+1-k} \xi_n,\lambda^{k-n-1} \eta_n)&=(\xi_n(2n+2-2k),\eta_n(2n+2-2k))\\
&=\Psi\circ \mathcal{F}^{2n+2-2k}(x_n(0))\\
&=\Psi\circ \mathcal{F}^{2n+2-2k}\circ \mathcal{I}(x_n(0))\\
&=\Psi \circ \mathcal{I} \circ \mathcal{F}^{2k-2n-2}(x_n(0))\\
&=(\Psi\circ \mathcal{I} \circ \Psi^{-1}) \circ \Psi( \mathcal{F}^{2k}(x_n(0)))\\
&=\mathcal{I}^* (\xi_n(2k),\eta_n(2k))-\mathcal{I}^*(0,0)\\
&=L_2 (\lambda^{k} \xi_n,\lambda^{-k} \eta_n)
+O(\|(\lambda^k \xi_n,\lambda^{-k}\eta_n)\|^{\frac 32})\\
&=(\alpha^{-1}\lambda^{-k} \eta_n,\alpha\lambda^{k} \xi_n)
+\lambda^{\frac{3k}{2}} \cdot O(\|(\xi_n,\lambda^{-2k} \eta_n)\|^{\frac 32}).
\end{align*}
Indeed, we have $\mathcal{I}(x_n(0))=\mathcal{I}(s_n(0),0)=(s_n(0),0)=x_n(0)$, and $\mathcal{F}^{2k-2n-2}(x_n(0))=\mathcal{F}^{2k}(x_n(0))$, by the $(2n+2)$-periodicity of  $h_n$. Here, we have also used that $\mathcal{I}^*(0,0)=(0,0)$. Then, the last two lines above follow from \eqref{contole non bisbi} and Lemma \ref{carac ldeux}. 

Multiplying by $\alpha \lambda^k$, and projecting on the first component, we deduce that $\eta_n=\alpha \lambda^{n+1} \xi_n+\lambda^{\frac{5k}{2}} \cdot O(\|(\xi_n,\lambda^{-2k} \eta_n)\|^{\frac 32})$. Letting $k:=\lceil \frac{n}{2}\rceil$, we thus obtain $\eta_n=\alpha \lambda^{n+1} \xi_n+\lambda^{\frac{5n}{4}} \cdot O(\|(\xi_n,\lambda^{-n} \eta_n)\|^{\frac 32})=\alpha \lambda^{n+1} \xi_n+\lambda^{\frac{5n}{4}} \cdot O(|\xi_n|^{\frac 32})$, i.e.,
$$
\eta_n=\alpha \lambda^{n+1} \xi_n+O(\lambda^{\frac{5n}{4}}),
$$
which concludes the proof of the first estimate in \eqref{etan xin}.

For  any $k \in\{0,\dots,n+1\}$, we thus have
\begin{equation}\label{premier estimatesktk}
(\lambda^k \xi_n,\lambda^{-k} \eta_n)=\xi_n(\lambda^k,\alpha \lambda^{n+1-k})+O(\lambda^{\frac{5n}{4}-k}).
\end{equation}

Then, by linearity of $L_2$, and by \eqref{contole non bisbi}, we obtain
\begin{align*}
(\alpha^{-1} \eta_n,\alpha  (\xi_n-\xi_\infty))&=
L_2( \xi_n-\xi_\infty, \eta_n)\\
&=L_2(  \xi_n, \eta_n)-L_2( \xi_\infty,0)\\
&=\mathcal{I}^*( \xi_n,  \eta_n)-\mathcal{I}^*(  \xi_\infty,0)+O(\|(\xi_n-\xi_\infty, \eta_n)\|^{\frac 32}) \\
&=\Psi \circ \mathcal{I} (s_n(0),0)-\Psi \circ \mathcal{I} (s_\infty(0),0)+O(\|(\xi_n-\xi_\infty, \eta_n)\|^{\frac 32})\\
&=\Psi(s_n(0),0)-\Psi(s_\infty(0),0)+O(\|(\xi_n-\xi_\infty, \eta_n)\|^{\frac 32})\\
&=( \xi_n,\eta_n)-( \xi_\infty,0)+O(\|(\xi_n-\xi_\infty, \eta_n)\|^{\frac 32})\\
&=(\xi_n-\xi_\infty,\eta_n)+O(\|(\xi_n-\xi_\infty, \eta_n)\|^{\frac 32}).
\end{align*}
The estimate  of the error term above follows from the fact that $\mathcal{I}^*(z)=L_2(z)+O(|z|^{\frac{3}{2}})$, and from the definition of the extension of our system of coordinates to a neighborhood of the separatrix (the differential of $\mathcal{I}^*$ at points which are on the separatrix is equal to $L_2$).
Projecting on the first component,  
we thus obtain $\lambda^{n+1}\xi_n=\alpha^{-1} \eta_n+O(\lambda^{\frac{5n}{4}})=\xi_n-\xi_\infty+O(\lambda^{\frac{5n}{4}})$, hence
$$
\xi_n=\frac{\xi_\infty}{1-\lambda^{n+1}}+O(\lambda^{\frac{5n}{4}})=\xi_\infty(1+\lambda^{n+1})+O(\lambda^{\frac{5n}{4}}),
$$
which gives the second estimate in \eqref{etan xin}.
Combining this with \eqref{premier estimatesktk}, this concludes the proof of \eqref{rel bizar}.
\end{proof}

\begin{corollary}\label{main crororor}
Let $v_1^u:=\alpha L_1^{-1}(0,1)\in \mathrm{E}^u_{\mathcal{F}^2}(0_1,0)$, and set $v_2^u:=D_{(0_1,0)} \mathcal{F} \cdot v_1^u \in \mathrm{E}^u_{\mathcal{F}^2}(0_2,0)$. Then, for  each
$n \geq n_0$, and for each $k\in \{0,\dots,\lceil\frac{n+1}{2}\rceil\}$, we have
\begin{align*}
x_n(2k)-x_\infty(2k)&=\lambda^{n+1-k}\xi_\infty\cdot v_1^u+O(\lambda^{n-\frac{k}{2}}),\\
x_n(2k+1)-x_\infty(2k+1)&=\lambda^{n+1-k}\xi_\infty\cdot v_2^u+O(\lambda^{n-\frac{k}{2}}).
\end{align*}
\end{corollary}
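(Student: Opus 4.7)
The plan is to combine the precise formula for $\Psi(x_n(2k))$ obtained in Lemma \ref{lemma eaxxu} with the nonlinear control on $\Psi^{-1}$ given by \eqref{contole non}, and read off the dominant term in the unstable direction.

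First I would subtract $\Psi(x_\infty(2k)) = (\lambda^k \xi_\infty, 0)$ from \eqref{rel bizar} to get
\[
\Psi(x_n(2k)) - \Psi(x_\infty(2k)) = \xi_\infty \bigl(\lambda^{n+1+k},\ \alpha \lambda^{n+1-k}\bigr) + O(\lambda^{\frac{5n}{4}-k}).
\]
Because $k \leq \lceil \frac{n+1}{2} \rceil$, we have $n+1-k \geq k$, so the first coordinate $\lambda^{n+1+k}$ is negligible compared to $\alpha \lambda^{n+1-k}$ in the second coordinate; the dominant part of the difference is $\alpha \lambda^{n+1-k} \xi_\infty \cdot (0,1)$.

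Next I would apply the $C^{1,1/2}$-estimate \eqref{contole non} for $\Psi^{-1}$ with $z := \Psi(x_n(2k))$ and $z' := \Psi(x_\infty(2k))$. The linear part produces
\[
L_1^{-1}\bigl(\Psi(x_n(2k)) - \Psi(x_\infty(2k))\bigr) = \alpha \lambda^{n+1-k} \xi_\infty \cdot L_1^{-1}(0,1) + \text{(smaller)} = \lambda^{n+1-k} \xi_\infty \cdot v_1^u + \text{(smaller)},
\]
by the definition $v_1^u := \alpha L_1^{-1}(0,1)$. For the nonlinear correction, note that in the admissible range $k \leq \lceil \frac{n+1}{2} \rceil$ we have $|z|, |z'| = O(\lambda^k)$, so $\max(|z|^{1/2}, |z'|^{1/2}) = O(\lambda^{k/2})$, while $|z - z'| = O(\lambda^{n+1-k})$; multiplying gives an error of order $O(\lambda^{n+1-k/2})$. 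Combining with the error $O(\lambda^{\frac{5n}{4}-k})$ inherited from Lemma \ref{lemma eaxxu} and the first-coordinate term $O(\lambda^{n+1+k})$, all contributions are dominated by $O(\lambda^{n-k/2})$ throughout the range of $k$. This yields the first estimate.

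For the odd-index estimate, I would apply $\mathcal{F}$ to both $x_n(2k)$ and $x_\infty(2k)$: by smoothness of $\mathcal{F}$ near $(0_1, 0)$,
\[
x_n(2k+1) - x_\infty(2k+1) = D_{(0_1,0)} \mathcal{F} \cdot \bigl(x_n(2k) - x_\infty(2k)\bigr) + O\bigl( \max(\|x_n(2k)\|, \|x_\infty(2k)\|) \cdot \|x_n(2k) - x_\infty(2k)\|\bigr).
\]
The linear part becomes $\lambda^{n+1-k} \xi_\infty \cdot D_{(0_1,0)}\mathcal{F} \cdot v_1^u = \lambda^{n+1-k} \xi_\infty \cdot v_2^u$ by the definition of $v_2^u$, while the quadratic correction is $O(\lambda^k \cdot \lambda^{n+1-k}) = O(\lambda^{n+1})$, which is again absorbed into $O(\lambda^{n-k/2})$. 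The main obstacle to keep in mind is the bookkeeping: one must check that $\lambda^{\frac{5n}{4}-k}$ (coming from the inversion of the palindromic relation in Lemma \ref{lemma eaxxu}) and $\lambda^{n+1-k/2}$ (coming from the $C^{1,1/2}$ nonlinearity of $\Psi^{-1}$) both remain negligible compared to $\lambda^{n-k/2}$ in the full range $k \leq \lceil \frac{n+1}{2}\rceil$, which is exactly why the range is restricted this way.
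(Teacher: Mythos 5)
Your proposal is correct and follows essentially the same route as the paper's proof: subtract $\Psi(x_\infty(2k))=(\lambda^k\xi_\infty,0)$ from the expression \eqref{rel bizar} of Lemma \ref{lemma eaxxu}, pull back through $\Psi^{-1}$ using \eqref{contole non} to identify the leading term $\alpha\lambda^{n+1-k}\xi_\infty L_1^{-1}(0,1)=\lambda^{n+1-k}\xi_\infty v_1^u$, and then apply $\mathcal{F}$ for the odd-index case. Your error bookkeeping (the $O(\lambda^{n+1-k/2})$ H\"older contribution, the $O(\lambda^{\frac{5n}{4}-k})$ term from Lemma \ref{lemma eaxxu}, and the $O(\lambda^{n+1+k})$ stable-direction term, all absorbed into $O(\lambda^{n-\frac k2})$ on the stated range of $k$) is if anything slightly more explicit than the paper's.
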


\begin{proof}
%
Let $k\in \{0,\dots,\lceil\frac{n+1}{2}\rceil\}$. Recall that $\Psi(x_\infty(2k))=(\lambda^k \xi_\infty,0)$. Therefore, we deduce from \eqref{rel bizar} that
\begin{align*}
\Psi(x_n(2k))-\Psi(x_\infty(2k))&=\xi_\infty(\lambda^{n+1+k},\alpha\lambda^{n+1-k})+O(\lambda^{\frac{5n}{4}-k})\\
&=\alpha\lambda^{n+1-k}\xi_\infty(\alpha^{-1}\lambda^{2k},1)+O(\lambda^{\frac{5n}{4}-k}).
\end{align*}
By \eqref{contole non}, and since $0 \leq k \leq\lceil \frac{n+1}{2}\rceil$, we conclude that
$$
x_n(2k)-x_\infty(2k)=\lambda^{n+1-k}\xi_\infty\cdot \alpha L_1^{-1}(0,1)+O(\lambda^{n-\frac{k}{2}}).
$$
We have $(0,1) \in \mathrm{E}_{D_\lambda}^{u}(0,0)$, and $L_1^{-1} (0,1) \in \mathrm{E}_{\mathcal{F}^2}^u(0_1,0)$. 
Set $v_1^u:=\alpha L_1^{-1} (0,1)$. We have $v_1^u\in \mathrm{E}_{\mathcal{F}^2}^u(0_1,0)$, and for each $k\in \{0,\dots,\lceil\frac{n+1}{2}\rceil\}$, it holds
$$
x_n(2k)-x_\infty(2k)=\lambda^{n+1-k}\xi_\infty\cdot v_1^u+O(\lambda^{n-\frac{k}{2}}).
$$

Applying $\mathcal{F}$, we get
\begin{align*}
x_n(2k+1)-x_\infty(2k+1)&=\mathcal{F}(x_n(2k))-\mathcal{F}(x_\infty(2k))\\
&=D_{(0_1,0)}\mathcal{F}\cdot (x_n(2k)-x_\infty(2k))+O(\lambda^{2(n+1-k)})\\
&=\lambda^{n+1-k}\xi_\infty\, D_{(0_1,0)}\mathcal{F}\cdot v_1^u+O(\lambda^{n-\frac{k}{2}}),
\end{align*}
with $D_{(0_1,0)}\mathcal{F}\cdot v_1^u=:v_2^u$, and $v_2^u \in D_{(0_1,0)}\mathcal{F}\cdot \mathrm{E}_{\mathcal{F}^2}^u(0_1,0)=\mathrm{E}_{\mathcal{F}^2}^u(0_2,0)$.
\end{proof}

\begin{remark}\label{remarque geomee}
It is also possible to  derive the previous result (Lemma \ref{lemma eaxxu}) by more geometric arguments. In the rest of this section, we explain how similar estimates can be obtained by considering the growth of certain dispersing waves fronts.
\end{remark}

Given $n \geq 0$, we consider the dispersing wave front
$\mathcal{W}_{n,\infty}(0):=\{x(s):=(s,0):s_n\leq s \leq S_n\}$, where
$s_n:=\min(s_n(0),s_\infty(0))$, and $S_n:=\max(s_n(0),s_\infty(0))$.
In other terms,  $\mathcal{W}_{n,\infty}(0)$ is the arc of $\Gamma_3$
connecting the points with respective parameters $s_n(0)$ and $s_\infty(0)$,
each point being endowed with $0$ angle. Given any
$x=(s,0) \in \mathcal{W}_{n,\infty}(0)$, we denote by $x(s,k)$,
$0 \leq k \leq 2n+1$, its forward iterates.
\begin{figure}[H]
\begin{center}
    \includegraphics [width=15cm]{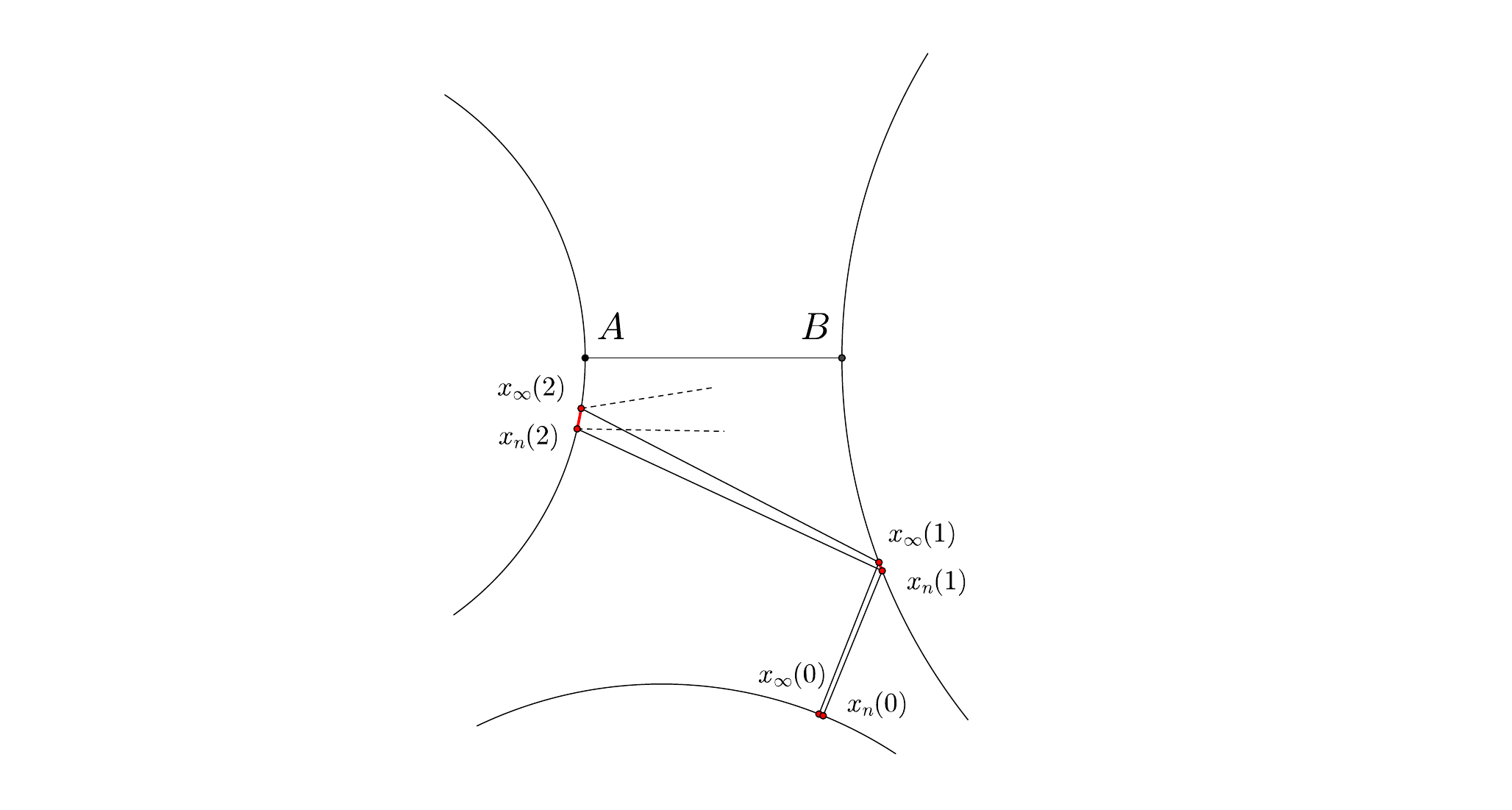}
\end{center}
\caption{Approximating periodic and homoclinic orbits}
\label{fig:hom-period-appr}
\end{figure}

The respective symbolic codings of the forward orbits of $x_n(0)$ and $x_\infty(0)$ are
\begin{equation*}
3\underbrace{21212\dots 21212}_{2n+1} 321\dots,\qquad
3\underbrace{21212\dots 21212}_{2n+1} 121\dots,
\end{equation*}
By strict convexity, it follows that for any $1 \leq k \leq n$, $\mathcal{W}_{n,\infty}(2k):=\mathcal{F}^{2k}(\mathcal{W}_{n,\infty})$ is a dispersing wave front whose projection on the configuration space is the arc of $\Gamma_1$ between the points with respective parameters $s_n(2k)$ and $s_\infty(2k)$. In particular, we have $|\mathcal{W}_{n,\infty}(2k)|\leq C_0$, for some uniform constant $C_0>0$. Besides, by Lemma \ref{homcolincc}, we know that $\lim_{n \to +\infty} x_\infty(2n)=(0_1,0)$,  and by Corollary \ref{coroooooooo}, we also have $\lim_{n \to +\infty} x_n(2n)=\mathcal{I} (x_\infty(2))=(s_\infty(2),-\varphi_\infty(2))$. Therefore, the sequence of wave fronts $(\mathcal{W}_{n,\infty}(2n))_{n \geq 0}$ converges pointwise, and there exists $W>0$ such that $\lim_{n \to+\infty}|\mathcal{W}_{n,\infty}(2n)|=W$.

Besides, for any $1 \leq k \leq n$, it holds
\begin{align*}
|\mathcal{W}_{n,\infty}(2k)|&=\int_{\mathcal{W}_{n,\infty}(0)} \|D_{x} \mathcal{F}^{2k} \cdot dx\|_p
= \left|\int_{s_{n}(0)}^{s_{\infty}(0)} \|D_{(s,0)} \mathcal{F}^{2k} \cdot (1,0)\|_p\, ds\right|\\
&= \int_{s_{n}}^{S_n} \left\|\prod_{j=0}^{k-1} D_{x(s,2j)}\mathcal{F}^2\cdot (1,0)\right\|_p\, ds.
\end{align*}

On the one hand,  the orbit $h_\infty=(x_\infty(k))_{k \geq 0}$ is homoclinic to the period two orbit $(12)$ whose Lyapunov exponent is equal to $\frac 12 \log \mu$, where $\mu=\lambda^{-1}>1$. Since the $s$-coordinate corresponds to the position on the obstacles, whose curvature is nonzero, the vector $(1,0)$ has some non-vanishing component along the unstable direction. Thus, for $k=n$, we obtain
$$
\left\|\prod_{j=0}^{n-1} D_{x(s_\infty(0),2j)}\mathcal{F}^2\cdot (1,0)\right\|_p=\left\|\prod_{j=0}^{n-1} D_{x_\infty(0)}\mathcal{F}^2\cdot (1,0)\right\|_p\sim K(s_\infty(0)) \cdot \mu^n,
$$
for some constant $K(s_\infty(0))>0$.

On the other hand, by Corollary \ref{coroooooooo}, we get
\begin{align*}
\left\|\prod_{j=0}^{n-1} D_{x_n(2j)}\mathcal{F}^2\cdot (1,0)\right\|_p&\sim \left\|\prod_{j=\lceil \frac n2\rceil}^{2} D_{\mathcal{I}(x_\infty(2j))}\mathcal{F}^2 \circ \prod_{j=0}^{\lceil \frac n2\rceil-1} D_{x_\infty(2j)}\mathcal{F}^2\cdot (1,0)\right\|_p\\
&\sim K(s_n(0)) \cdot \mu^n,
\end{align*}
for some constant $K(s_n(0))>0$.

Then, by strict convexity of the table, there exists a monotonic map $K_n\colon[s_n,S_n]\to \R_+^*$, $t\mapsto K_n(t)$, such that $K_n(s_n)=K(s_n)$, $K_n(S_n)=K(S_n)$, and
$$
\left\|\prod_{j=0}^{n-1} D_{x(s,2j)}\mathcal{F}^2\cdot (1,0)\right\|_p \sim K_n(s) \cdot \mu^n.
$$
By Corollary \ref{coroooooooo},  we have $\lim_{n \to+\infty}|\mathcal{W}_{n,\infty}(0)|=\lim_{n \to+\infty} (S_n-s_n)=0$, and then, 
\begin{equation*}
W\sim |\mathcal{W}_{n,\infty}(2n)|\sim \left( \int_{s_n}^{S_n} K_n(s)\, ds\right) \cdot \mu^n\sim  \left|s_n(0)-s_\infty(0)\right|\cdot K(s_\infty(0))\mu^n.
\end{equation*}
By the fact that $x_n(0)=(s_n(0),0)$ and $x_\infty(0)=(s_\infty(0),0)$, we deduce that
$$
x_n(0)-x_\infty(0)\sim  \lambda^n\cdot v,
$$
for some vector $v \in \R^2$ with $\|v\|=K(s_\infty(0))^{-1} W>0$.
In particular, we see that we recover the same kind of estimate as in Corollary \ref{main crororor}, for $k=0$.
To obtain estimates on the forward iterates $x_\infty(2k)$ and $x_n(2k)$, with $0 \leq k \leq n$, we just have to apply the dynamics. For instance, we have
\begin{align*}
x_n(1)-x_\infty(1) = \mathcal{F}( x_n(0))-\mathcal{F} (x_\infty(0))\sim D_{x_\infty(0)} \mathcal{F} \cdot (x_n(0)- x_\infty(0)).
\end{align*}
In the same way, by Corollary \ref{coroooooooo}, for any $k \in\{0,\dots, n+1\}$, we get
\begin{align*}
&x_n(k)-x_\infty(k) = \mathcal{F}^k( x_n(0))-\mathcal{F}^k (x_\infty(0))\\
&\sim \prod_{j=0}^{k-1} D_{x_\infty(j)} \mathcal{F} \cdot (x_n(0)- x_\infty(0))=(s_n(0)-s_\infty(0)) \cdot \prod_{j=0}^{k-1} D_{x_\infty(j)} \mathcal{F} \cdot (1,0).
\end{align*}
With the notations introduced above, we get $\left\| \prod_{j=0}^{k-1} D_{x_\infty(2j)} \mathcal{F}^2 \cdot (1,0)\right\| \sim K(s_\infty(0)) \cdot \mu^k$. In analogy with Corollary \ref{main crororor}, for $k \in \{0,\dots,n+1\}$, we  thus have 
$$
\|x_n(2k)-x_\infty(2k)\| \sim W\cdot \lambda^{n-k}.
$$

\section{Consequences on the Marked Length Spectrum}
\label{sec:consec-MLS}

\subsection{Further remarks on the asymptotic constants}

In this part, we want to relate the asymptotic constants associated to the vectors $v_1^s,v_2^s,v_1^u,v_2^u$ defined above.
Here, we let $v_1^s=L_1^{-1} (1,0)\in \mathrm{E}_{\mathcal{F}^2}^s(0_1,0)$,   $v_2^s=D_{(0_1,0)}\mathcal{F}\cdot v_1^s\in \mathrm{E}_{\mathcal{F}^2}^s(0_2,0)$ be as in Corollary \ref{coro sympt hoc}, and we let $v_1^u:=\alpha L_1^{-1} (0,1) \in \mathrm{E}^u_{\mathcal{F}^2}(0_1,0)$, $v_2^u=D_{(0_1,0)} \mathcal{F} \cdot v_1^u \in \mathrm{E}^u_{\mathcal{F}^2}(0_2,0)$ be as in Corollary \ref{main crororor}.

Let us denote
\begin{align*}
v_1^s=(C_{1,s}^s,C_{1,\varphi}^s), &\qquad  v_2^s=(C_{2,s}^s,C_{2,\varphi}^s),\\
v_1^u=(C_{1,s}^u,C_{1,\varphi}^u), &\qquad  v_2^u=(C_{2,s}^u,C_{2,\varphi}^u),
\end{align*}
and recall that  for $\{i,j\}=\{1,2\}$, we have
\begin{equation}\label{matrice df}
D_{(0_i,0)}\mathcal{F}=-\begin{pmatrix}
\frac{\ell}{R_i}+1 & \ell\\
\frac{\ell}{R_iR_j}+\frac{1}{R_i}+\frac{1}{R_j} & \frac{\ell}{R_j}+1
\end{pmatrix}=-\begin{pmatrix}
\alpha_i & \ell\\
\gamma & \alpha_j
\end{pmatrix},
\end{equation}
with $\ell:=\frac{\mathcal{L}(12)}{2}=h(0_1,0_2)$, $\alpha_1:= \frac{\ell}{R_1}+1$,  $\alpha_2:= \frac{\ell}{R_2}+1$, and $\gamma:=\ell^{-1}(\alpha_1 \alpha_2 - 1)$.

In particular, we get
\begin{equation}\label{matrice df carre}
D_{(0_i,0)}\mathcal{F}^2=\begin{pmatrix}
2\alpha_1 \alpha_2- 1 & 2 \alpha_j\ell\\
2 \alpha_i \gamma & 2\alpha_1 \alpha_2- 1
\end{pmatrix}=\frac 12\begin{pmatrix}
\lambda+\lambda^{-1}& 4 \alpha_j\ell\\
4 \alpha_i \gamma & \lambda+\lambda^{-1}
\end{pmatrix}.
\end{equation}
Indeed, we know that $\mathrm{tr}(D_{(0_1,0)}\mathcal{F}^2)=\mathrm{tr}(D_{(0_2,0)}\mathcal{F}^2)=\lambda+\lambda^{-1}$, since $\lambda,\lambda^{-1}$ are the common eigenvalues of $D_{(0_1,0)}\mathcal{F}^2$ and $D_{(0_2,0)}\mathcal{F}^2$.

\begin{corollary}
We have
\begin{equation}\label{estim constant c1s}
\frac{C_{1,s}^s}{C_{1,\varphi}^s}=-\frac{4 \alpha_2\lambda}{1-\lambda^2}\ell<0,\qquad
\frac{C_{1,s}^u}{C_{1,\varphi}^u}=\frac{4 \alpha_2 \lambda}{1-\lambda^2}\ell>0,
\end{equation}
and
\begin{equation}\label{estim constant c2s}
\frac{C_{2,s}^s}{C_{2,\varphi}^s}=-\frac{4 \alpha_1\lambda}{1-\lambda^2}\ell<0,\qquad
\frac{C_{2,s}^u}{C_{2,\varphi}^u}=\frac{4 \alpha_1\lambda}{1-\lambda^2}\ell>0.
\end{equation}
Moreover, we have
\begin{equation}\label{etsim socnt cs}
\frac{C_{2,s}^s}{C_{1,\varphi}^s}=\frac{2 \lambda}{1-\lambda}\ell>0,\qquad \frac{C_{2,s}^u}{C_{1,\varphi}^u}=-\frac{2}{1-\lambda}\ell<0.
\end{equation}
\end{corollary}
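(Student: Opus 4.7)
The plan is a direct eigenvector computation, using the explicit matrix formula \eqref{matrice df carre} for $D_{(0_i,0)}\mathcal{F}^2$ and the characteristic relation between $\lambda$ and the geometric parameters $\alpha_1,\alpha_2,\ell$.

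\textbf{Step 1: the intra-orbit ratios \eqref{estim constant c1s} and \eqref{estim constant c2s}.}
Since $v_1^s \in \mathrm{E}_{\mathcal{F}^2}^s(0_1,0)$ is a $\lambda$-eigenvector of $D_{(0_1,0)}\mathcal{F}^2$, the first row of the equation $D_{(0_1,0)}\mathcal{F}^2 v_1^s=\lambda v_1^s$, with the matrix taken from \eqref{matrice df carre}, reads
\begin{equation*}
\tfrac12(\lambda+\lambda^{-1})C_{1,s}^s+2\alpha_2\ell\,C_{1,\varphi}^s=\lambda C_{1,s}^s,
\end{equation*}
so that $\tfrac12(\lambda-\lambda^{-1})C_{1,s}^s=-2\alpha_2\ell\,C_{1,\varphi}^s$, which rearranges to the first identity in \eqref{estim constant c1s}. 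The unstable case is identical with $\lambda$ replaced by $\lambda^{-1}$, yielding the second identity in \eqref{estim constant c1s}; the signs are as claimed since $0<\lambda<1$ and $\alpha_2,\ell>0$. The relations \eqref{estim constant c2s} follow in exactly the same way from the matrix $D_{(0_2,0)}\mathcal{F}^2$, which differs only by swapping the roles of $\alpha_1$ and $\alpha_2$.

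\textbf{Step 2: the key trace identity.} Since $\lambda,\lambda^{-1}$ are the eigenvalues of $D_{(0_i,0)}\mathcal{F}^2$, taking the trace in \eqref{matrice df carre} gives $2\alpha_1\alpha_2-1=\tfrac12(\lambda+\lambda^{-1})$, i.e.
\begin{equation*}
4\alpha_1\alpha_2\lambda=(1+\lambda)^2.
\end{equation*}
Combined with $1-\lambda^2=(1-\lambda)(1+\lambda)$, this gives the crucial simplification
\begin{equation*}
\frac{4\alpha_1\alpha_2\lambda}{1-\lambda^2}=\frac{1+\lambda}{1-\lambda}.
\end{equation*}

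\textbf{Step 3: the cross-relations \eqref{etsim socnt cs}.}
By the definition $v_2^s=D_{(0_1,0)}\mathcal{F}\cdot v_1^s$, the first row of \eqref{matrice df} gives
\begin{equation*}
C_{2,s}^s=-\alpha_1 C_{1,s}^s-\ell\, C_{1,\varphi}^s.
\end{equation*}
Dividing by $C_{1,\varphi}^s$ and using the first identity of \eqref{estim constant c1s},
\begin{equation*}
\frac{C_{2,s}^s}{C_{1,\varphi}^s}=-\alpha_1\cdot\Bigl(-\frac{4\alpha_2\ell\lambda}{1-\lambda^2}\Bigr)-\ell=\ell\Bigl(\frac{4\alpha_1\alpha_2\lambda}{1-\lambda^2}-1\Bigr)=\ell\Bigl(\frac{1+\lambda}{1-\lambda}-1\Bigr)=\frac{2\ell\lambda}{1-\lambda},
\end{equation*}
which is the first identity of \eqref{etsim socnt cs}. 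The unstable analogue is obtained the same way:
\begin{equation*}
\frac{C_{2,s}^u}{C_{1,\varphi}^u}=-\alpha_1\cdot\frac{4\alpha_2\ell\lambda}{1-\lambda^2}-\ell=-\ell\Bigl(\frac{1+\lambda}{1-\lambda}+1\Bigr)=-\frac{2\ell}{1-\lambda},
\end{equation*}
giving the second identity.

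\textbf{Main obstacle.} There is essentially no conceptual obstacle: the whole argument reduces to reading off two scalar equations from each matrix identity and applying the trace relation $4\alpha_1\alpha_2\lambda=(1+\lambda)^2$. The only care needed is bookkeeping of the two off-diagonal entries ($4\alpha_1\ell$ versus $4\alpha_2\ell$) that distinguish the matrices at $(0_1,0)$ and $(0_2,0)$, and the sign check for each ratio using $0<\lambda<1$ and the positivity of $\alpha_1,\alpha_2,\ell$.
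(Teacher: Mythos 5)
Your proposal is correct and follows essentially the same route as the paper: reading the eigenvector equations off the first row of $D_{(0_i,0)}\mathcal{F}^2$ from \eqref{matrice df carre} for the intra-orbit ratios, then applying the first row of \eqref{matrice df} together with the trace identity $4\alpha_1\alpha_2=\lambda+\lambda^{-1}+2$ for the cross-relations. The only cosmetic difference is that you package the trace relation as $4\alpha_1\alpha_2\lambda=(1+\lambda)^2$ before simplifying, whereas the paper manipulates $\tfrac{4\alpha_1\alpha_2}{\lambda^{-1}-\lambda}$ directly; the computations are identical.
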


\begin{proof}
Since $v_1^s=(C_{1,s}^s,C_{1,\varphi}^s)\in \mathrm{E}_{\mathcal{F}^2}^s(0_1,0)$, then by definition, and by \eqref{matrice df carre}, we have
$$
\frac{\lambda+\lambda^{-1}}{2} C_{1,s}^s + 2 \alpha_2 \ell C_{1,\varphi}^s=\lambda C_{1,s}^s,
$$
and the first equality of \eqref{estim constant c1s} follows. The second one is proved analogously, since $v_1^u=(C_{1,s}^u,C_{1,\varphi}^u)\in \mathrm{E}_{\mathcal{F}^2}^u(0_1,0)$.
We argue similarly for \eqref{estim constant c2s}, since $v_2^s\in \mathrm{E}_{\mathcal{F}^2}^s(0_2,0)$, and $v_2^u\in \mathrm{E}_{\mathcal{F}^2}^u(0_2,0)$.

Now, by definition,  we have $v_2^s= D_{(0_1,0)} \mathcal{F} \cdot v_1^s$, and then,  \eqref{matrice df} and \eqref{estim constant c1s} yield
$$
C_{2,s}^s=-\alpha_1 C_{1,s}^s -\ell C_{1,\varphi}^s=\left(\frac{4 \alpha_1\alpha_2}{\lambda^{-1}-\lambda}-1\right)\ell C_{1,\varphi}^s.
$$
Moreover, by the definition of $D_{(0_1,0)} \mathcal{F}^2$, we see that
\begin{equation}\label{equation recatrace}
\mathrm{tr}(D_{(0_1,0)} \mathcal{F}^2)=4 \alpha_1\alpha_2-2=\lambda+\lambda^{-1},
\end{equation}
hence
$$
\frac{C_{2,s}^s}{C_{1,\varphi}^s}=\frac{2 (1+\lambda)}{\lambda^{-1}-\lambda}\ell=\frac{2 (1+\lambda)}{\lambda^{-1}(1-\lambda^2)}\ell=\frac{2 \lambda}{1-\lambda}\ell.
$$
Similarly, we have $v_2^u= D_{(0_1,0)} \mathcal{F} \cdot v_1^u$, and
$$
C_{2,s}^u=-\alpha_1 C_{1,s}^u -\ell C_{1,\varphi}^u=-\left(\frac{4 \alpha_1\alpha_2}{\lambda^{-1}-\lambda}+1\right)\ell C_{1,\varphi}^u=-\frac{2}{1-\lambda}\ell C_{1,\varphi}^u,
$$
which gives \eqref{etsim socnt cs}.
\end{proof}

\begin{corollary}\label{neffffff}
The following relations hold between the asymptotic constants $C_{1,\varphi}^s,C_{2,\varphi}^s,C_{1,\varphi}^u$ and $C_{2,\varphi}^u$:
\begin{equation}\label{lien cohi}
\frac{C_{2,\varphi}^s}{C_{1,\varphi}^s}=-\frac{1+\lambda}{2 \alpha_1},\qquad\quad \frac{C_{1,\varphi}^u}{C_{1,\varphi}^s}=-1,\qquad\quad
\frac{C_{2,\varphi}^u}{C_{1,\varphi}^s}=\frac{1+\lambda^{-1}}{2 \alpha_1}.
\end{equation}
Since we will need it in the following, we also compute:
\begin{equation}\label{lien bis cohi}
\left(\frac{C_{2,\varphi}^s}{C_{1,\varphi}^s}\right)^2=\frac{\lambda\alpha_2}{\alpha_1},\qquad\quad \left(\frac{C_{2,\varphi}^u}{C_{1,\varphi}^s}\right)^2=\frac{\alpha_2}{\lambda\alpha_1}.
\end{equation}
\end{corollary}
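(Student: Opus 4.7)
The four identities break into two distinct groups: the ratios involving only $\varphi$-components of $v_j^s$ and $v_j^u$, and the two squared identities. The plan is to obtain the three principal ratios in sequence by combining (i) the matrix formula \eqref{matrice df} for $D_{(0_1,0)}\mathcal{F}$, (ii) the already-computed ratios \eqref{estim constant c1s}--\eqref{estim constant c2s}, and (iii) the time-reversal symmetry encoded in Lemma~\ref{carac ldeux}.

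\textbf{Step 1 (ratio $C_{2,\varphi}^s/C_{1,\varphi}^s$).} Since $v_2^s = D_{(0_1,0)}\mathcal{F}\cdot v_1^s$, reading off the second component from \eqref{matrice df} gives $C_{2,\varphi}^s = -\gamma\, C_{1,s}^s - \alpha_2\, C_{1,\varphi}^s$. Dividing by $C_{1,\varphi}^s$ and plugging in the first equality of \eqref{estim constant c1s}, one obtains
\begin{equation*}
\frac{C_{2,\varphi}^s}{C_{1,\varphi}^s} = \frac{4\alpha_2 \lambda\, \gamma\ell}{1-\lambda^2}-\alpha_2.
\end{equation*}
The trace relation \eqref{equation recatrace} gives $4\alpha_1\alpha_2 = (1+\lambda)^2/\lambda$, hence $\gamma\ell = \alpha_1\alpha_2 - 1 = (1-\lambda)^2/(4\lambda)$. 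Substituting and simplifying yields $-2\alpha_2\lambda/(1+\lambda)$, which, by the expression for $\alpha_1\alpha_2$, equals $-(1+\lambda)/(2\alpha_1)$.

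\textbf{Step 2 (ratio $C_{1,\varphi}^u/C_{1,\varphi}^s$).} Here, the key is the time-reversal symmetry. From Lemma~\ref{carac ldeux}, $L_2(1,0) = (0,\alpha)$, while the definition $L_2 = L_1\circ\mathcal{I}\circ L_1^{-1}$ gives $L_2(1,0) = L_1\,\mathcal{I}(v_1^s)$. Therefore $\mathcal{I}(v_1^s) = L_1^{-1}(0,\alpha) = \alpha L_1^{-1}(0,1) = v_1^u$. Since $\mathcal{I}(s,\varphi)=(s,-\varphi)$, this reads $(C_{1,s}^s, -C_{1,\varphi}^s) = (C_{1,s}^u, C_{1,\varphi}^u)$, giving the identity $C_{1,\varphi}^u/C_{1,\varphi}^s = -1$.

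\textbf{Step 3 (ratio $C_{2,\varphi}^u/C_{1,\varphi}^s$).} Arguing as in Step 1 with $v_2^u = D_{(0_1,0)}\mathcal{F}\cdot v_1^u$, one has $C_{2,\varphi}^u = -\gamma\, C_{1,s}^u - \alpha_2\, C_{1,\varphi}^u$. Substituting the identities from Step 2 gives $C_{2,\varphi}^u = -\gamma\, C_{1,s}^s + \alpha_2\, C_{1,\varphi}^s$. Dividing by $C_{1,\varphi}^s$, inserting \eqref{estim constant c1s} and again $\gamma\ell = (1-\lambda)^2/(4\lambda)$, one finds $2\alpha_2/(1+\lambda)$; using $4\alpha_1\alpha_2\lambda = (1+\lambda)^2$, this equals $(1+\lambda^{-1})/(2\alpha_1)$.

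\textbf{Step 4 (squared identities).} These are immediate: squaring the results of Steps 1 and 3 and applying $(1+\lambda)^2 = 4\alpha_1\alpha_2\lambda$ once more converts $(1+\lambda)^2/(4\alpha_1^2)$ and $(1+\lambda^{-1})^2/(4\alpha_1^2) = (1+\lambda)^2/(4\alpha_1^2\lambda^2)$ into $\lambda\alpha_2/\alpha_1$ and $\alpha_2/(\lambda\alpha_1)$ respectively. The only genuinely non-computational step is Step 2, and even there the substance is small: the scaling factor $\alpha$ in the definition $v_1^u = \alpha L_1^{-1}(0,1)$ was set up in Lemma~\ref{carac ldeux} precisely so that $\mathcal{I}$ maps $v_1^s$ exactly (not just projectively) to $v_1^u$, which is what forces the ratio to equal $-1$ rather than merely a scalar.
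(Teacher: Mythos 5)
Your proof is correct. For the first and third ratios you work row-by-row with the matrix \eqref{matrice df} and then substitute $\gamma\ell=\alpha_1\alpha_2-1=(1-\lambda)^2/(4\lambda)$, whereas the paper chains together the ratios already recorded in \eqref{estim constant c2s} and \eqref{etsim socnt cs}; these are only cosmetically different, since both rest on $v_2^{s}=D_{(0_1,0)}\mathcal{F}\cdot v_1^{s}$, $v_2^{u}=D_{(0_1,0)}\mathcal{F}\cdot v_1^{u}$ and the trace identity \eqref{equation recatrace}. The genuine divergence is in the middle identity $C_{1,\varphi}^u/C_{1,\varphi}^s=-1$. The paper obtains it dynamically: it takes $n=2m-1$, uses Lemma \ref{lemma palind} to get $\varphi_n(2m)=0$ at the palindromic turning point, and compares the second components of the asymptotics of Corollary \ref{coro sympt hoc} and Corollary \ref{main crororor} to force $\varphi_\infty(2m)\sim C^s_{1,\varphi}\xi_\infty\lambda^m\sim -C^u_{1,\varphi}\xi_\infty\lambda^m$. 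You instead read the identity off algebraically: Lemma \ref{carac ldeux} gives $L_2(1,0)=(0,\alpha)$, and unwinding $L_2=L_1\circ\mathcal{I}\circ L_1^{-1}$ yields $\mathcal{I}(v_1^s)=\alpha L_1^{-1}(0,1)=v_1^u$ exactly, i.e. $(C^u_{1,s},C^u_{1,\varphi})=(C^s_{1,s},-C^s_{1,\varphi})$. Your route is shorter and bypasses the asymptotic estimates entirely, at the (small) price of leaning on the precise normalizations $v_1^s=L_1^{-1}(1,0)$ and $v_1^u=\alpha L_1^{-1}(0,1)$, and it gives the extra relation $C^u_{1,s}=C^s_{1,s}$ for free. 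Both arguments are incarnations of the same time-reversal symmetry --- the paper's via the palindromy of the orbits $h_n$, yours via the linearized involution $L_2$.
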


\begin{proof}
The first equality follows immediately from \eqref{estim constant c2s} and \eqref{etsim socnt cs}, by writing
$$
\frac{C_{2,\varphi}^s}{C_{1,\varphi}^s}=\frac{C_{2,\varphi}^s}{C_{2,s}^s}\cdot \frac{C_{2,s}^s}{C_{1,\varphi}^s}=-\frac{1-\lambda^2}{4\alpha_1\lambda}\cdot\frac{2 \lambda}{1-\lambda}=-\frac{1+\lambda}{2\alpha_1}.
$$
To prove the other identities, we argue as follows. Let us assume that $n=2m-1$, for some integer $m \geq 1$. By the estimates of Corollary \ref{main crororor}, for $k=m$, we get
$$
x_n(2m)-x_\infty(2m)\sim \lambda^{n+1-m} \xi_\infty \cdot v_1^u=\lambda^m \xi_\infty \cdot v_1^u.
$$
By Lemma \ref{lemma palind}, we also have $x_n(n+1)=x_n(2m)=(s_n(2m),0)$, i.e., $\varphi_n(2m)=0$. Thus, by projecting the previous relation on the second coordinate, we get
\begin{equation}\label{first ififnrifnig}
\varphi_\infty(2m)\sim -C_{1,\varphi}^u \cdot \lambda^m \xi_\infty.
\end{equation}
On the other hand, by Corollary \ref{coro sympt hoc}, we also have
$$
x_\infty(2m)=(s_\infty(2m),\varphi_\infty(2m))\sim \lambda^m \xi_\infty \cdot v_1^s,
$$
and then, the projection of this relation on the second coordinate yields
$$
\varphi_\infty(2m) \sim C_{1,\varphi}^s \cdot \lambda^m \xi_\infty.
$$
Combining this with \eqref{first ififnrifnig}, we obtain
\begin{equation}\label{first c1vzrhi}
\frac{C_{1,\varphi}^u }{C_{1,\varphi}^s}=-1,
\end{equation}
which gives the second equality in \eqref{lien cohi}.

Let us  show the last one. By \eqref{estim constant c2s}, \eqref{etsim socnt cs} and \eqref{first c1vzrhi}, we get
$$
\frac{C_{2,\varphi}^u}{C_{1,\varphi}^s}=\frac{C_{2,\varphi}^u}{C_{2,s}^u}\cdot \frac{C_{2,s}^u}{C_{1,\varphi}^u}\cdot \frac{C_{1,\varphi}^u}{C_{1,\varphi}^s}=\frac{1-\lambda^2}{4 \alpha_1 \lambda}\cdot \left(-\frac{2}{1-\lambda}\right) \cdot (-1)=\frac{1+\lambda}{2 \alpha_1 \lambda},
$$
which concludes the proof of \eqref{lien cohi}.

To show \eqref{lien bis cohi}, we use the fact that $\lambda+\lambda^{-1}+2=4 \alpha_1\alpha_2$.  Also, by \eqref{lien cohi}, we have
$\frac{C_{2,\varphi}^s}{C_{2,\varphi}^u}=-\lambda$.  We get
$$
\left(\frac{C_{2,\varphi}^s}{C_{1,\varphi}^s}\right)^2=\lambda^2 \left(\frac{C_{2,\varphi}^u}{C_{1,\varphi}^s}\right)^2= \frac{1+2\lambda+\lambda^{2}}{4 \alpha_1^2}=\lambda \cdot \frac{\lambda+2+\lambda^{-1}}{4 \alpha_1^2}=\lambda \cdot  \frac{4 \alpha_1 \alpha_2}{4 \alpha_1^2}=\frac{\lambda \alpha_2}{\alpha_1}.
$$
\end{proof}

\subsection{Computing the $2$-jet of the generating function}

Recall that the function $h\colon(s,s')\mapsto \|\gamma(s)-\gamma(s')\|$ is generating for the billiard map $\mathcal{F}\colon(s,\varphi)\mapsto(s',\varphi')$. In other terms, for the $1$-form $\omega:=-\sin(\varphi)ds$, we have
$$
\mathcal{F}^* \omega-\omega=dh(s,s').
$$
In the following, given a point $\gamma(s) \in \partial \mathcal{D}$ in the boundary of the table, we denote by $R(s)>0$ the radius of curvature at $\gamma(s)$, defined as the radius of the osculating circle at this point. We also have $R(s)=\mathcal{K}(s)^{-1}$, where $\mathcal{K}(s)$ is the curvature. In the following lemma, we compute the second order Taylor expansion of the function $h$. Given $x=(s,\varphi)\in \mathcal{M}$ with $\mathcal{F}(s,\varphi)=(s',\varphi')$, we set
\begin{equation}\label{def zetafff}
\zeta^-(x):=\frac{R(s)\cos(\varphi)}{h(s,s')},\qquad \zeta^+(x):=\frac{R(s')\cos(\varphi')}{h(s,s')}.
\end{equation}

\begin{figure}[H]
\begin{center}
    \includegraphics [width=14cm]{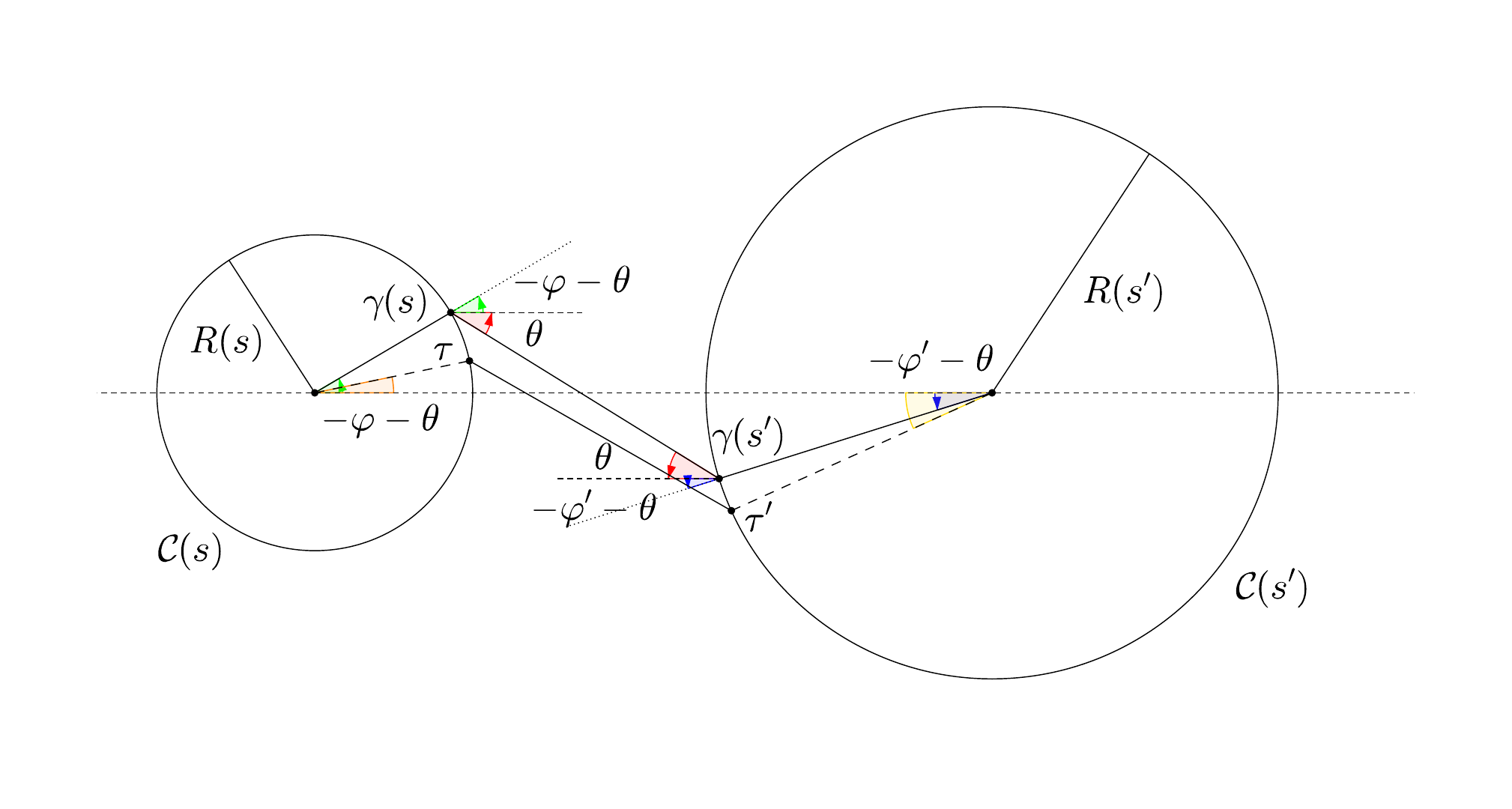}
\end{center}
\end{figure}

\begin{lemma}\label{lemma utile}
Let $x=(s,\varphi)$, $\bar{x}=(\bar{s},\bar{\varphi})\in \mathcal{M}$ be close to each other. We set $x'=(s',\varphi'):=\mathcal{F}(s,\varphi)$, and $\bar{x}'=(\bar{s}',\bar{\varphi}'):=\mathcal{F}(\bar{s},\bar{\varphi})$.  Then, we get
\begin{align*}
&\quad h(\bar{s},\bar{s}')-h(s,s')=
R(s) \sin(\varphi) (\varphi-\bar{\varphi})+R(s') \sin(\varphi') (\varphi'-\bar{\varphi}')\\
&\quad +\frac{h(s,s')}{2}\left(\zeta^-(x)\left(1+\zeta^-(x)\right)\cdot(\varphi-\bar{\varphi})^2+\zeta^+(x) \left(1+\zeta^+(x)\right)\cdot(\varphi'-\bar{\varphi}')^2\right.\\
&\quad \left.+ 2 \zeta^-(x)\zeta^+(x)\cdot (\varphi-\bar{\varphi})(\varphi'-\bar{\varphi}')+O(\|\bar{x}-x\|^3)\right).
\end{align*}
\end{lemma}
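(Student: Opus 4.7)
The plan is to obtain the expansion directly by Taylor-expanding the endpoint parametrizations and the Euclidean distance, then converting the resulting $(\delta s, \delta s')$-expansion into the $(\bar\varphi-\varphi, \bar\varphi'-\varphi')$-expansion advertised in the lemma. Concretely, I would first expand
\begin{equation*}
\gamma(\bar s)=\gamma(s)+\delta s\cdot t(s)-\frac{(\delta s)^2}{2R(s)}n(s)+O(\delta s^3),
\end{equation*}
and similarly for $\gamma(\bar s')$, using the Frenet-type identity $\gamma''(s)=-\mathcal{K}(s)\,n(s)$; here $t(s),n(s)$ denote the unit tangent and inward normal at $\gamma(s)$. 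Plugging these expansions into $h(\bar s,\bar s')^2=\|\gamma(\bar s)-\gamma(\bar s')\|^2$, dropping terms of order $\geq 3$, and using the expansion $\sqrt{1+u}=1+u/2-u^2/8+O(u^3)$ yields an explicit second-order expansion of $h(\bar s,\bar s')-h(s,s')$ in $(\delta s,\delta s')$.

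Second, I would identify the resulting coefficients using the decomposition of the unit vector $\hat e=(\gamma(s')-\gamma(s))/h$ in both Frenet frames, namely $\hat e=\sin\varphi\,t(s)+\cos\varphi\,n(s)=\sin\varphi'\,t(s')-\cos\varphi'\,n(s')$. The linear term reproduces the generating-function identity $dh=-\sin\varphi\,ds+\sin\varphi'\,ds'$, while the quadratic coefficients simplify to
\begin{equation*}
h_{ss}=\tfrac{\cos\varphi\,(1+\zeta^-)}{R(s)},\qquad h_{s's'}=\tfrac{\cos\varphi'\,(1+\zeta^+)}{R(s')},\qquad h_{ss'}=\tfrac{\cos\varphi\cos\varphi'}{h},
\end{equation*}
after using $\langle t(s),t(s')\rangle=-\cos(\varphi+\varphi')$, which itself drops out of equating the two expressions for $\hat e$ in the respective orthonormal frames.

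Third, I would change variables from $(\delta s,\delta s')$ to the angle increments $(\bar\varphi-\varphi,\bar\varphi'-\varphi')$. The key input is the infinitesimal relation obtained by implicitly differentiating $\sin\varphi=\mp\partial_s h(s,s')$ and $\sin\varphi'=\pm\partial_{s'}h(s,s')$: to leading order, $\delta s$ and $\delta s'$ are proportional to the radii $R(s)$ and $R(s')$ multiplied by the angle increments, which is the precise reason $R(s)\sin\varphi$ and $R(s')\sin\varphi'$ appear as linear coefficients. Substituting this relation into the quadratic part and using $R(s)\cos\varphi=h\zeta^-$ and $R(s')\cos\varphi'=h\zeta^+$ converts $h_{ss}(\delta s)^2$ into $h\zeta^-(1+\zeta^-)(\varphi-\bar\varphi)^2$, and analogously for the other two quadratic terms, producing the stated quadratic form $\tfrac{h}{2}[\,\zeta^-(1+\zeta^-)X^2+\zeta^+(1+\zeta^+)Y^2+2\zeta^-\zeta^+\,XY\,]$ with $X=\varphi-\bar\varphi$, $Y=\varphi'-\bar\varphi'$.

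The delicate step I expect to require the most care is the substitution $\delta s\leftrightarrow R(s)(\varphi-\bar\varphi)$ and $\delta s'\leftrightarrow R(s')(\varphi'-\bar\varphi')$, because the exact relation between position and angle increments is not strictly linear: it has $O(\|\bar x-x\|^2)$ corrections coming from the curvature of the boundaries and from the fact that moving $\bar s$ alters $\hat e$ and hence $\bar\varphi$ by more than a pure tangent-rotation contribution. I would handle this by working at the level of formal power series in $(\delta s,\delta s')$, expanding the implicit relations $\sin\bar\varphi=\mp\partial_s h(\bar s,\bar s')$ and $\sin\bar\varphi'=\pm\partial_{s'}h(\bar s,\bar s')$ to second order, inverting the resulting affine system, and checking that the second-order contributions of the substitution combine with the quadratic Hessian terms to produce precisely the stated $\zeta^\pm$-quadratic form, with all further contributions absorbed into $O(\|\bar x-x\|^3)$.
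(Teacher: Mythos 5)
Your route is genuinely different from the paper's. The paper never computes the Hessian of $h$ in the arclength variables: it replaces each boundary arc near $\gamma(s)$ and $\gamma(s')$ by its osculating circle (third--order contact), writes the perturbed endpoints in complex notation as points of these circles at angular parameters $\psi,\psi'$, expands the chord length between two points of the two circles exactly in the angular displacements $\psi-\varphi$, $\psi'-\varphi'$, and only then identifies $\psi,\psi'$ with $\bar\varphi,\bar\varphi'$. The coefficients $R(s)\sin\varphi$, $\tfrac{R(s)}{2}\cos\varphi$ and the cross term $\tfrac{R(s)R(s')\cos\varphi\cos\varphi'}{h}$ thus come out of an exact computation for circles, not from $h_{ss}$, $h_{ss'}$, $h_{s's'}$.

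The gap in your plan is the substitution step, and the issue is not merely that it is delicate: your assessment of the size of the correction is wrong. You claim that $\delta s$ and $\delta s'$ equal $R(s)$ and $R(s')$ times the respective angle increments up to $O(\|\bar x-x\|^2)$. In fact the reflection angle at $\gamma(\bar s)$ changes for two \emph{first--order} reasons: the normal rotates (at rate $\mathcal{K}(s)$ per unit arclength) and the chord to $\gamma(\bar s')$ rotates (at rate $\cos\varphi/h$ in $\delta s$ and $\cos\varphi'/h$ in $\delta s'$). Hence, up to overall sign conventions,
\begin{equation*}
\bar\varphi-\varphi=-\Bigl(\mathcal{K}(s)+\frac{\cos\varphi}{h}\Bigr)\delta s-\frac{\cos\varphi'}{h}\,\delta s'+O(\|\bar x-x\|^2),
\end{equation*}
and symmetrically for $\bar\varphi'-\varphi'$. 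The deviation from your proportionality is first order and couples $\delta s$ with $\delta s'$, so the change of variables is a genuinely non--diagonal linear map $M$; inverting it and substituting into $\tfrac12\bigl(h_{ss}\delta s^2+2h_{ss'}\delta s\,\delta s'+h_{s's'}\delta s'^2\bigr)$ produces the form $\tfrac12\,\transp{(M^{-1})}\,\mathrm{Hess}(h)\,M^{-1}$ evaluated on the angle increments, which is \emph{not} the quadratic form stated in the lemma. The stated form is exactly what the diagonal substitution $\delta s=\pm R(s)(\varphi-\bar\varphi)$, $\delta s'=\pm R(s')(\varphi'-\bar\varphi')$ yields, i.e.\ it corresponds to measuring the angular increments by the rotation of the normal alone --- equivalently by the central angle on the osculating circle, which is the quantity $\psi-\varphi$ actually manipulated in the paper's computation --- rather than by the increment of the $\varphi$--coordinate of $\mathcal{M}$. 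To complete your argument you would have to justify replacing the $\varphi$--increment by the normal--rotation increment (this is precisely the identification $\psi=\bar\varphi+O(|\bar s-s|^3)$ asserted in the paper), and your own first--order relation above shows that this replacement is not harmless for a general pair $x,\bar x$. As written, the plan does not close this step.
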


\begin{proof}
Assume that $x=(s,\varphi)\in \mathcal{M}$ and $\bar{x}=(\bar{s},\bar{\varphi})\in \mathcal{M}$ are close to each other, and let $(s',\varphi'):=\mathcal{F}(s,\varphi)$, and $(\bar{s}',\bar{\varphi}'):=\mathcal{F}(\bar{s},\bar{\varphi})$.

We denote by  $\mathcal{C}(s)$, $\mathcal{C}(s')$ the respective osculating circles at the  points $\gamma(s)$ and $\gamma(s')$, with   radii $R(s)$, $R(s')$, and we set $v(s,s'):=\gamma(s)-\gamma(s')$, so that $h(s,s')=\|v(s,s')\|$. Then $\varphi\in[-\frac{\pi}{2},\frac{\pi}{2}]$, resp. $\varphi'\in[-\frac{\pi}{2},\frac{\pi}{2}]$, is the oriented angle between the normal to $\mathcal{C}(s)$ at $\gamma(s)$ and the vector $v(s,s')$, resp. between the normal to $\mathcal{C}(s')$ at $\gamma(s')$ and the vector $-v(s,s')$. We may also assume that the line segment connecting the centers of $\mathcal{C}(s)$ and $\mathcal{C}(s')$ is horizontal, and we  denote by $\theta=\theta(s,s')$ the oriented angle between the horizontal and the vector $v(s,s')$. We also denote by $\tau$ and $\tau'$ the points obtained by projecting radially the  points $\gamma(\bar{s})$ and $\gamma(\bar{s}')$ on the respective osculating circles $\mathcal{C}(s)$ and $\mathcal{C}(s')$, and we define accordingly their angular coordinates $\psi$,  $\psi'$  on the circles $\mathcal{C}(s)$ and $\mathcal{C}(s')$. We have
$$
\psi=\bar{\varphi}+O((\bar{s}-s)^3),\qquad\psi'=\bar{\varphi}'+O((\bar{s}'-s')^3).
$$

Using complex notations in the plane, we obtain successively
\begin{align*}
&\quad \|\tau-\tau'\|\\
&=\|(\gamma(s)-\gamma(s'))+(\tau-\gamma(s))-(\tau'-\gamma(s'))\|\\
&=|v(s,s')-R(s) e^{-\mathrm{i} (\varphi+\theta)}(1-e^{-\mathrm{i} (\psi-\varphi)})-R(s') e^{-\mathrm{i} (\varphi'+\theta)}(1-e^{-\mathrm{i} (\psi'-\varphi')})|\\
&=h(s,s') \cdot \left|1-\frac{R(s)\mathrm{i} e^{-\mathrm{i} (\varphi+\theta)}}{v(s,s')}(\psi-\varphi)-\frac{R(s) e^{-\mathrm{i} (\varphi+\theta)}}{2 v(s,s')}(\psi-\varphi)^2\right.\\
&\quad -\left. \frac{R(s') \mathrm{i} e^{-\mathrm{i} (\varphi'+\theta)}}{v(s,s')}(\psi'-\varphi')-\frac{R(s')e^{-\mathrm{i} (\varphi'+\theta)}}{2v(s,s')}(\psi'-\varphi')^2+O((\psi-\varphi)^3)\right|\\
&=h(s,s') -\Re R(s) \mathrm{i} e^{-\mathrm{i} (\varphi+\theta)}\frac{\overline{v(s,s')}}{|v(s,s')|}(\psi-\varphi)-\Re R(s') \mathrm{i} e^{-\mathrm{i}(\varphi'+\theta)}\frac{\overline{v(s,s')}}{|v(s,s')|}(\psi'-\varphi')\\
&\quad -\Re \frac{R(s)}{2}  e^{-\mathrm{i} (\varphi+\theta)}\frac{\overline{v(s,s')}}{|v(s,s')|}(\psi-\varphi)^2-\Re \frac{R(s')}{2}e^{-\mathrm{i} (\varphi'+\theta)}\frac{\overline{v(s,s')}}{|v(s,s')|}(\psi'-\varphi')^2\\
&\quad + \frac{1}{2|v(s,s')|} \left(\Re R(s)  e^{-\mathrm{i} (\varphi+\theta)}\frac{\overline{v(s,s')}}{|v(s,s')|}(\psi-\varphi)+\Re R(s') e^{-\mathrm{i}(\varphi'+\theta)}\frac{\overline{v(s,s')}}{|v(s,s')|}(\psi'-\varphi')\right)^2\\
&\quad +O((\psi-\varphi)^3)\\
&=h(s,s')- R(s) \sin(\varphi) (\psi-\varphi)-R(s') \sin(\varphi') (\psi'-\varphi')\\
&\quad +\frac{R(s)}{2}\cos(\varphi) (\psi-\varphi)^2+\frac{R(s')}{2}\cos(\varphi') (\psi'-\varphi')^2\\
&\quad + \frac{1}{2 h(s,s')}(R(s)\cos(\varphi) (\psi-\varphi)+R(s')\cos(\varphi') (\psi'-\varphi'))^2+O((\psi-\varphi)^3),
\end{align*}
where we have used that $\frac{\overline{v(s,s')}}{|v(s,s')|}=-e^{\mathrm{i}\theta}$.

Now, by the property of osculating circles, we know that $\mathcal{C}(s)$, resp. $\mathcal{C}(s')$ is tangent to $\partial \mathcal{D}$ at $\gamma(s)$, resp. $\gamma(s')$ up to order three, i.e., $\|\tau-\gamma(\bar s)\|=O(|s-\bar{s}|^3)$, and $\|\tau'-\gamma(\bar{s}')\|=O(|s'-\bar{s}'|^3)=O(|s-\bar{s}|^3)$. Then, we also have $|\psi-\bar{\varphi}|=O(|s-\bar{s}|^3)$ and $|\psi'-\bar{\varphi}'|=O(|s'-\bar{s}'|^3)$. Therefore, we get
\begin{align*}
&\quad h(\bar{s},\bar{s}')-h(s,s')=
- R(s) \sin(\varphi) (\bar{\varphi}-\varphi)-R(s') \sin(\varphi') (\bar{\varphi}'-\varphi')\\
& +\frac{R(s)}{2}\cos(\varphi) (\bar{\varphi}-\varphi)^2+\frac{R(s')}{2}\cos(\varphi') (\bar{\varphi}'-\varphi')^2\\
& + \frac{1}{2 h(s,s')}(R(s)\cos(\varphi) (\bar{\varphi}-\varphi)+R(s')\cos(\varphi') (\bar{\varphi}'-\varphi'))^2+O(\|\bar{x}-x\|^3)=\\
&- R(s) \sin(\varphi) (\bar{\varphi}-\varphi)-R(s') \sin(\varphi') (\bar{\varphi}'-\varphi')\\
& +\frac{R(s)\cos(\varphi)}{2}\left(1+\frac{R(s)\cos(\varphi)}{h(s,s')}\right)(\bar{\varphi}-\varphi)^2\\
&+\frac{R(s')\cos(\varphi')}{2}\left(1+\frac{R(s')\cos(\varphi')}{h(s,s')}\right)
\cdot (\bar{\varphi}'-\varphi')^2 \\
&+ \frac{R(s)\cos(\varphi)R(s')\cos(\varphi')}{ h(s,s')}\cdot (\bar{\varphi}-\varphi)(\bar{\varphi}'-\varphi')+O(\|\bar{x}-x\|^3),
\end{align*}
as claimed.
\end{proof}

\subsection{Estimates on the length of periodic orbits}\label{section estimmmm}

In this part, we apply the  results of the previous sections to show that the lengths of the periodic orbits $(h_n)_{n \geq 0}$ can be combined in order to recover some geometric information on the billiard table $\mathcal{D}$.

Recall that $\sigma=(12)$, and that for any integer $n\geq 0$, the periodic orbit $h_n$ is defined as $h_n:=(32(12)^n)$. We let $(x_n(k)=(s_n(k),\varphi_n(k)))_{0 \leq k \leq 2n+1}$ be the coordinates of the points in $h_n$.   We also recall that given a periodic orbit $(x(k)=(s(k),\varphi(k))_{0 \leq k \leq p}$ of period $p \geq 2$ labelled by a word $\rho\in \{1,2,3\}^p$, we denote by $\mathcal{L}(\rho)$ the length of this orbit. By the definition of the generating function $h$, we have
$$
\mathcal{L}(\rho)=\sum_{k=0}^{p-1} h(s(k),s(k+1)).
$$

With our previous notations, the $(s,\varphi)$-coordinates of the points in the period two orbit $(12)$ are $x(k)=(s(k),\varphi(k))=(0_1,0)$, for any even integer $k$, and $x(k)=(s(k),\varphi(k))=(0_2,0)$, for any odd integer $k$.
As we have seen in Lemma \ref{lemma palind}, for any $n \geq 0$, the orbit $h_n$ is palindromic, thus
$$
\mathcal{L}(h_n)-(n+1)\mathcal{L}(\sigma)=2 \sum_{k=0}^{n} [h(s_n(k),s_n(k+1))-h(s(k),s(k+1))].
$$

Recall that $(x_\infty(k)=(s_\infty(k),\varphi_\infty(k)))_{k}$ are the $(s,\varphi)$-coordinates of the points in the homoclinic orbit $h_\infty$.  By Proposition \ref{prop sympt hoc}, 
we know that 
\begin{equation}\label{estimedjjt}
\|x_n(k)-x_\infty(k)\|=O(\lambda^{n-\frac k2}),\qquad  \|x_\infty(k)-x(k)\|=O(\lambda^{\frac k2}).
\end{equation}

 Therefore, by exponential decay of the terms in the following sum, we may define a quantity $\mathcal{L}^\infty=\mathcal{L}^\infty(\sigma)\in \R$ as 
$$
\mathcal{L}^\infty:=\lim_{n\to \infty}(\mathcal{L}(h_n)-(n+1)\mathcal{L}(\sigma))=2 \sum_{k=0}^{+\infty}[h(s_\infty(k),s_\infty(k+1))-h(s(k),s(k+1))].
$$
Then, 
we have
\begin{equation}\label{estim lhnd}
\mathcal{L}(h_n)-(n+1)\mathcal{L}(\sigma)-\mathcal{L}^\infty= -\Sigma_n,
\end{equation}
with $\Sigma_n:=\Sigma_n^1+\Sigma_n^2$, and
\begin{align}
\Sigma_n^1&:= 2\sum_{k=0}^{n}\left[h(s_\infty(k),s_\infty(k+1))-h(s_n(k),s_n(k+1))\right]\label{ligne 1},\\
\Sigma_n^2&:=2\sum_{k=n+1}^{+\infty}\left[h(s_\infty(k),s_\infty(k+1))-h(s(k),s(k+1))\right].\label{ligne 2}
\end{align}

Let $R_1=\mathcal{K}_1^{-1}>0$ and $R_2=\mathcal{K}_2^{-1}>0$ be the respective radii of curvature at the points $A=\gamma(0_1)$ and $B=\gamma(0_2)$ in the orbit $(12)$, and recall that $\ell=\frac{\mathcal{L}(12)}{2}$.
\begin{lemma}\label{estim asymptttot}
For any integer $n \geq 0$, and for $k \in \{0,\dots,n+1\}$, we have
\begin{align*}
\zeta^-(x_n(2k))&=R_1/\ell+O(\max(\lambda^k,\lambda^{n-k})),\\ \zeta^+(x_n(2k))&=R_2/\ell+O(\max(\lambda^k,\lambda^{n-k})),\\
\zeta^-(x_n(2k+1))&=R_2/\ell+O(\max(\lambda^k,\lambda^{n-k})),\\
\zeta^+(x_n(2k+1))&=R_1/\ell+O(\max(\lambda^k,\lambda^{n-k})),
\end{align*}
where $\zeta^-,\zeta^+$ are defined as in \eqref{def zetafff}. 
Besides,
\begin{equation*}
h(s_n(2k),s_n(2k+1))=\ell+O(\max(\lambda^k,\lambda^{n-k})),
\end{equation*}
and
\begin{align*}
\varphi_\infty(2k)&=C_{1,\varphi}^s \xi_\infty \cdot \lambda^k + O(\lambda^{\frac{3k}{2}}),\\
\varphi_\infty(2k+1)&=C_{2,\varphi}^s \xi_\infty \cdot \lambda^k + O(\lambda^{\frac{3k}{2}}),\\
\varphi_n(2k)-\varphi_\infty(2k)&=  C_{1,\varphi}^u\xi_\infty \cdot \lambda^{n+1-k}+O(\lambda^{n-\frac k2}),\\
\varphi_n(2k+1)-\varphi_\infty(2k+1)&=C_{2,\varphi}^u\xi_\infty \cdot \lambda^{n+1-k}+O(\lambda^{n-\frac k2}).
\end{align*}
\end{lemma}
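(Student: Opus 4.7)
The $\varphi$-estimates would follow at once from Proposition \ref{prop sympt hoc}: projecting each of the four vector identities of that proposition onto the second coordinate, with the notation $v_j^s = (C_{j,s}^s, C_{j,\varphi}^s)$ and $v_j^u = (C_{j,s}^u, C_{j,\varphi}^u)$ for $j = 1, 2$, I read off exactly the four claimed asymptotics for $\varphi_\infty(2k)$, $\varphi_\infty(2k+1)$, $\varphi_n(2k) - \varphi_\infty(2k)$ and $\varphi_n(2k+1) - \varphi_\infty(2k+1)$, valid in the range of indices where Proposition \ref{prop sympt hoc} directly applies.

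For the $\zeta^\pm$ and $h$ estimates, my plan is first to upgrade the bounds coming from Proposition \ref{prop sympt hoc} and Lemma \ref{homcolincc} into a single crude displacement estimate
\[
\|x_n(k) - x(k)\| = O(\max(\lambda^{\lfloor k/2 \rfloor}, \lambda^{n - \lfloor k/2 \rfloor})) \quad \text{for } k \in \{0, \dots, 2n+1\},
\]
where $x(k)$ denotes the $k$-th point of the period-two orbit $\sigma = (12)$. In the first half of the range I combine the triangle inequality
\[
\|x_n(k) - x(k)\| \leq \|x_n(k) - x_\infty(k)\| + \|x_\infty(k) - x(k)\|
\]
with the two results cited. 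On the second half I invoke the palindromic symmetry $x_n(2n+2-k) = \mathcal{I}(x_n(k))$ from Lemma \ref{lemma palind}, using that $\mathcal{I}$ is a Euclidean isometry which fixes each of the two points of $\sigma$ (since their $\varphi$-coordinate vanishes).

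With this displacement bound in hand, the $\zeta^\pm$ and $h$ estimates reduce to a first-order continuity computation. The $C^3$ regularity of the boundary ensures that $R$ and the generating function $h$ are $C^1$ near the relevant base points, so
\[
R(s_n(2k)) = R_1 + O(|s_n(2k) - 0_1|), \qquad h(s_n(2k), s_n(2k+1)) = \ell + O(\|x_n(2k) - x(2k)\| + \|x_n(2k+1) - x(2k+1)\|),
\]
while $\cos(\varphi_n(2k)) = 1 + O(\varphi_n(2k)^2)$ is absorbed by the error term. Dividing yields the claimed expansion of $\zeta^-(x_n(2k))$; the three analogous statements follow by the same recipe once one notes that the coding $3212\dots$ forces even-indexed bounces (past $k=0$) onto $\mathcal{O}_1$ and odd-indexed bounces onto $\mathcal{O}_2$, which determines whether $R_1$ or $R_2$ appears in each formula.

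The main obstacle is not conceptual but rather one of bookkeeping. One must verify that the palindromic extension produces the symmetric error bound $O(\max(\lambda^k, \lambda^{n-k}))$ rather than a one-sided bound, and simultaneously keep track of the two scales $\lambda^k$ (stable decay of $x_\infty$ towards the periodic orbit) and $\lambda^{n-k}$ (distance of $x_n$ from $x_\infty$ caused by the finite truncation) across the full index range. Once the displacement bound is established uniformly, the remaining estimates are a routine Taylor expansion of smooth functions around their values on the period-two orbit.
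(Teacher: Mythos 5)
Your proposal is correct and follows essentially the same route as the paper: the paper likewise obtains the $\varphi$-asymptotics by projecting the estimates of Proposition \ref{prop sympt hoc} onto the second coordinate, and derives the $\zeta^{\pm}$ and $h$ expansions from the combined displacement bound $\|x_n(k)-x_\infty(k)\|=O(\lambda^{n-\frac k2})$, $\|x_\infty(k)-x(k)\|=O(\lambda^{\frac k2})$ (recorded as \eqref{estimedjjt}) together with continuity of $R$, $h$ and $\cos$. Your explicit use of the palindromic symmetry to cover the second half of the index range, and your remark that the $k=0$ bounce on $\mathcal{O}_3$ is harmless because the error there is $O(1)$, are exactly the bookkeeping the paper leaves implicit.
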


\begin{proof}
The first two sets of estimates  are a direct  consequence of \eqref{estimedjjt}. Recall that by definition, we have  $v_1^*=(C_{1,s}^*,C_{1,\varphi}^*)$ and $v_2^*=(C_{2,s}^*,C_{2,\varphi}^*)$, for $*\in \{s,u\}$. Then, the last four estimates are obtained by projecting the estimates in Proposition \ref{prop sympt hoc} 
on the second  component.
\end{proof}

Recall that $\alpha_1=\frac{\ell}{R_1}+1$ and $\alpha_2=\frac{\ell}{R_2}+1$. We define a symmetric constant $C_{\varphi}^s\in \R_+^*$ as (the equality follows from \eqref{lien bis cohi})
\begin{equation}\label{def constant sym}
C_{\varphi}^s:=\frac{2(C_{1,\varphi}^s)^2}{ (1-\lambda^2)\alpha_1}=\frac{2\lambda^{-1}(C_{2,\varphi}^s)^2}{ (1-\lambda^2)\alpha_2}.
\end{equation}

In the following, we abbreviate
$$
A_1:=\frac{R_1}{\ell}\left(1+\frac{R_1}{\ell}\right),\qquad B:=\frac{R_1}{\ell}\cdot \frac{R_2}{\ell},\qquad A_2:=\frac{R_2}{\ell}\left(1+\frac{R_2}{\ell}\right).
$$

\begin{prop}\label{main proprorop}
We get the following expansions according to the parity of $n \gg 1$:
\begin{align*}
\Sigma_n&=\ell  \xi_\infty^2 C_\varphi^s ((1+\lambda^2)A_1 \alpha_1-(1+\lambda)^2 B+2 \lambda A_2 \alpha_2)\lambda^{n+1}+O(\lambda^{\frac{3n}{2}}),\qquad n \text{ odd},\\
\Sigma_n&=\ell  \xi_\infty^2 C_\varphi^s(2 \lambda A_1 \alpha_1-(1+\lambda)^2 B+(1+ \lambda^2) A_2 \alpha_2)\lambda^{n+1}+O(\lambda^{\frac{3n}{2}}),\qquad n \text{ even}.
\end{align*}
\end{prop}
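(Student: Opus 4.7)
The approach is to expand each summand of $\Sigma_n=\Sigma_n^1+\Sigma_n^2$ using the two-jet formula of Lemma~\ref{lemma utile}, to substitute the sharp asymptotics from Lemma~\ref{estim asymptttot}, and to simplify the resulting expression using the identities of Corollary~\ref{neffffff}, together with the trace relation $\lambda+\lambda^{-1}+2=4\alpha_1\alpha_2$, the factorizations $A_1\alpha_1=(1+R_1/\ell)^2$ and $A_2\alpha_2=(1+R_2/\ell)^2$, and the definition of $C_\varphi^s$.

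The sum $\Sigma_n^2$ is the more direct: since the reference orbit $(12)$ has vanishing angles, the linear part of Lemma~\ref{lemma utile} vanishes identically and only the quadratic contributions survive. On each segment, $\zeta^-=R_1/\ell$ or $R_2/\ell$ depending on the obstacle, $\zeta^+$ is the other radius, and $h(s(k),s(k+1))=\ell$, so the summand equals $\frac{\ell}{2}[A_{i_k}\varphi_\infty(k)^2+A_{i_{k+1}}\varphi_\infty(k+1)^2+2B\,\varphi_\infty(k)\varphi_\infty(k+1)]$. Substituting $\varphi_\infty(2j)\sim\lambda^j\xi_\infty C_{1,\varphi}^s$ and $\varphi_\infty(2j+1)\sim\lambda^j\xi_\infty C_{2,\varphi}^s$, pairing consecutive segments, and summing the geometric series $\sum_{j\geq\lceil(n+1)/2\rceil}\lambda^{2j}=\lambda^{n+1}/(1-\lambda^2)$, the identities $(C_{2,\varphi}^s)^2=(\lambda\alpha_2/\alpha_1)(C_{1,\varphi}^s)^2$, $C_{1,\varphi}^sC_{2,\varphi}^s=-(1+\lambda)(C_{1,\varphi}^s)^2/(2\alpha_1)$ and the definition of $C_\varphi^s$ yield, for $n$ odd,
\[
\Sigma_n^2=\frac{1}{2}\,\ell\xi_\infty^2\,C_\varphi^s\big[(1+\lambda^2)A_1\alpha_1-(1+\lambda)^2 B+2\lambda A_2\alpha_2\big]\lambda^{n+1}+O(\lambda^{3n/2}),
\]
with the parity-swapped expression for $n$ even.

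For $\Sigma_n^1$, one applies Lemma~\ref{lemma utile} in the same way, but now both endpoints of every segment move. The crucial observation is that, after summing over $k$, the linear contribution telescopes to a boundary expression proportional to $\sin\varphi_n(0)(s_\infty(0)-s_n(0))-\sin\varphi_n(n+1)(s_\infty(n+1)-s_n(n+1))$, which vanishes by the palindromic boundary conditions $\varphi_n(0)=\varphi_n(n+1)=0$ (this is precisely the statement that $h_n$ is a critical point of the length functional under the palindromic constraint). Only the quadratic part of Lemma~\ref{lemma utile} remains. Setting $\Delta\varphi(k):=\varphi_n(k)-\varphi_\infty(k)$ and substituting $\Delta\varphi(2j)\sim\lambda^{n+1-j}\xi_\infty C_{1,\varphi}^u$ and $\Delta\varphi(2j+1)\sim\lambda^{n+1-j}\xi_\infty C_{2,\varphi}^u$, together with $C_{1,\varphi}^u=-C_{1,\varphi}^s$ and $(C_{2,\varphi}^u)^2=\alpha_2(C_{1,\varphi}^s)^2/(\lambda\alpha_1)$ from Corollary~\ref{neffffff}, the geometric sum over the same quadratic form yields $\Sigma_n^1=\Sigma_n^2$ at leading order, with error $O(\lambda^{3n/2})$. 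Adding the two contributions gives the claimed formula.

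The parity of $n$ enters because the central bounce $x_n(n+1)$ lies on obstacle~$1$ when $n$ is odd and on obstacle~$2$ when $n$ is even, which shifts by one the starting index of the geometric pairing and exchanges $(1+\lambda^2)\leftrightarrow 2\lambda$ in front of $A_1\alpha_1$ and $A_2\alpha_2$. The main technical obstacle is the control of sub-leading corrections in the quadratic sums: one must use the expansions of Lemma~\ref{estim asymptttot} simultaneously in the two small parameters $\lambda^{k/2}$ and $\lambda^{n+1-k/2}$ and check that, once paired with the $O(1)$ factors $\zeta^\pm(1+\zeta^\pm)$ and the $h$-prefactor, the aggregated errors combine into the stated $O(\lambda^{3n/2})$ remainder without contaminating the leading coefficient.
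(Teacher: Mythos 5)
Your proposal is correct and follows essentially the same route as the paper's proof: the same splitting $\Sigma_n=\Sigma_n^1+\Sigma_n^2$, the vanishing of the first-order terms of Lemma \ref{lemma utile} by telescoping together with $\varphi_n(0)=\varphi_n(n+1)=0$, substitution of the asymptotics of Lemma \ref{estim asymptttot} into the quadratic part, summation of the geometric series, and simplification via Corollary \ref{neffffff} and the trace relation, with the boundary/central-bounce terms accounting for the parity swap; the paper's computation indeed confirms your observation that $\Sigma_n^1$ and $\Sigma_n^2$ coincide at leading order, each contributing $\tfrac12\ell\xi_\infty^2 C_\varphi^s\,\mathcal{Q}\,\lambda^{n+1}$. (Minor slip: the telescoped boundary term should involve the angle differences $\varphi_\infty-\varphi_n$ rather than $s_\infty-s_n$, but it vanishes for the reason you give in either case.)
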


\begin{proof}
Recall that $\Sigma_n=\Sigma_n^1+\Sigma_n^2$. To estimate $\Sigma_n^1$ and $\Sigma_n^2$, we sum the Taylor expansions given by Lemma \ref{lemma utile} over all the points appearing in the sums. For  $\Sigma_n^1$, we take the reference points in the Taylor expansion to be those of the periodic orbit $h_n$, while for $\Sigma_n^2$, we take the reference points to be those of the orbit $(12)$. We see that in both sums, first order terms vanish:  for  $\Sigma_n^1$, we get
\begin{align*}
&\sum_{k=0}^n R(s_n(k)) \sin(\varphi_n(k)) (\varphi_n(k)-\varphi_\infty(k))\\
&+R(s_n(k+1)) \sin(\varphi_n(k+1)) (\varphi_n(k+1)-\varphi_\infty(k+1))=0.
\end{align*}
Indeed, the above sum is telescopic, and the angles at the first and last points vanish. Alternatively, this follows from the following  facts:
\begin{itemize}
\item for each $n \geq 0$, the  orbit $h_n$ is periodic, hence is a local minimizer of the length functional;
\item for each $n \geq 0$,  $h_n$ has period $2n+2$ and is palindromic, so that we can restrict ourselves to the first $n+1$ iterates;
\item for each $n \geq 0$,  $h_n$ shadows the homoclinic orbit $h_\infty$ for the first $n+1$ steps.
\end{itemize}
We argue similarly for $\Sigma_n^2$, since $(12)$ is also periodic.

Therefore, we just have to consider second order terms in the two sums. In the sum $\Sigma_n^1$, the index $k$ satisfies $k \in \{0,\dots,n\}$, and thus, $\max(\lambda^{\frac{k}{2}},\lambda^{n-\frac{k}{2}})=\lambda^{\frac{k}{2}}$ in the estimates of Lemma \ref{estim asymptttot}.  We obtain
\begin{align*}
\Sigma_n^1&=\sum_{k=0}^{n}
h(s_n(k),s_n(k+1))\left(\zeta^-(x_n(k))\left(1+\zeta^-(x_n(k))\right)\cdot(\varphi_n(k)-\varphi_\infty(k))^2\right.\\
&\left.+\zeta^+(x_n(k)) \left(1+\zeta^+(x_n(k))\right)\cdot(\varphi_n(k+1)-\varphi_\infty(k+1))^2+ 2 \zeta^-(x_n(k))\zeta^+(x_n(k))\cdot\right.\\
&\left.(\varphi_n(k)-\varphi_\infty(k))(\varphi_n(k+1)-\varphi_\infty(k+1))\right)+O(|s_n(k)-s_\infty(k)|^3),
\end{align*}
and, noting that $h(s(k),s(k+1))=\ell$ and $x(k)=(s(k),\varphi(k))=(0,0)$,  for all $k \in \Z$, we also have
\begin{align*}
&\Sigma_n^2=\sum_{k=n+1}^{+\infty}
\ell \left(\zeta^-(x(k))\left(1+\zeta^-(x(k))\right)\cdot \varphi_\infty(k)^2+\zeta^+(x(k)) \left(1+\zeta^+(x(k))\right)\right.\\
&\left.\cdot \varphi_\infty(k+1)^2+ 2 \zeta^-(x(k))\zeta^+(x(k))\cdot \varphi_\infty(k) \varphi_\infty(k+1)\right)+O(|s_\infty(k)|^3).
\end{align*}

$\bullet\ \mathbf{Odd\ case:}$ Let us assume that $n=2m-1$, for some integer $m \geq 1$. In the above expression of $\Sigma_n^1$, we see that we can group terms two by two, except for the indices $k=0$ and $k=n$, that we need to consider separately. By Lemma \ref{estim asymptttot} and the expression of $\Sigma_n^1$, we obtain
\begin{align*}
&\Sigma_n^1=\sum_{k=0}^{m-1}
2\ell\left(A_1 (C_{1,\varphi}^u \xi_\infty)^2 \cdot \lambda^{2(n+1-k)}+A_2 (C_{2,\varphi}^u \xi_\infty)^2 \cdot \lambda^{2(n+1-k)}\right.\\
&\left.+B C_{1,\varphi}^uC_{2,\varphi}^u \xi_\infty^2 (1+\lambda^{-1})\cdot \lambda^{2(n+1-k)} +O(\lambda^{2n-k})\right)\\
&+\ell A_1 (C_{1,\varphi}^u \xi_\infty)^2 \cdot\lambda^{2(n+1-m)}+O(\lambda^{2n-m})\\
&=2\ell (C_{1,\varphi}^s \xi_\infty )^2 \left(
A_1 \left(\frac{C_{1,\varphi}^u}{C_{1,\varphi}^s}\right)^2 +B(1+\lambda^{-1}) \frac{C_{1,\varphi}^u}{C_{1,\varphi}^s}\cdot \frac{C_{2,\varphi}^u}{C_{1,\varphi}^s}+A_2 \left(\frac{C_{2,\varphi}^u}{C_{1,\varphi}^s}\right)^2\right)\sum_{k=0}^{m-1}\lambda^{2(n+1-k)}\\
&+\ell    (C_{1,\varphi}^s\xi_\infty)^2 A_1 \left(\frac{C_{1,\varphi}^u}{C_{1,\varphi}^s}\right)^2 \cdot\lambda^{2m}+O(\lambda^{3m}).
\end{align*}
The additional term above is obtained for the index $k=n$, according to our previous remark; the index $k=0$ does not need to be considered, since it is in the error term, in $O(\lambda^{3m})$.
By the identities obtained in Corollary \ref{neffffff}, we thus get
\begin{align*}
\Sigma_n^1&=\ell (C_{1,\varphi}^s \xi_\infty)^2 \left(
2A_1  -B \frac{(1+\lambda^{-1})^2}{\alpha_1}+2A_2 \frac{\lambda^{-1}\alpha_2}{\alpha_1}\right)\frac{\lambda^2}{1-\lambda^2}\cdot\lambda^{n+1}\\
&+\ell   (C_{1,\varphi}^s \xi_\infty)^2 A_1  \cdot\lambda^{n+1}+O(\lambda^{\frac{3n}{2}})\\
&=\frac{\ell (C_{1,\varphi}^s \xi_\infty)^2}{(1-\lambda^2)\alpha_1}((1+\lambda^2)A_1 \alpha_1-(1+\lambda)^2B+2\lambda A_2 \alpha_2)\lambda^{n+1}+O(\lambda^{\frac{3n}{2}}).
\end{align*}
Let us now estimate the other sum. By Lemma \ref{estim asymptttot} and the  expression of $\Sigma_n^2$, we get
\begin{align*}
&\Sigma_n^2=\sum_{k=m}^{+\infty}
2\ell\left(A_1 (C_{1,\varphi}^s \xi_\infty)^2 \cdot \lambda^{2k}+A_2 (C_{2,\varphi}^s \xi_\infty)^2 \cdot \lambda^{2k}\right.\\
&\left.+B C_{1,\varphi}^s C_{2,\varphi}^s \xi_\infty^2 (1+\lambda)\cdot \lambda^{2k} +O(\lambda^{3k})\right)-\ell A_1 (C_{1,\varphi}^s \xi_\infty)^2 \cdot\lambda^{2m}+O(\lambda^{3m})\\
&=\ell  (C_{1,\varphi}^s \xi_\infty)^2 \left(
2A_1 +2 B(1+\lambda) \frac{C_{2,\varphi}^s}{C_{1,\varphi}^s}+ 2 A_2 \left(\frac{C_{2,\varphi}^s}{C_{1,\varphi}^s}\right)^2\right)\sum_{k=m}^{+\infty}\lambda^{2k}\\
&-\ell    (C_{1,\varphi}^s \xi_\infty)^2 A_1  \cdot\lambda^{2m}+O(\lambda^{3m})\\
&=\frac{\ell (C_{1,\varphi}^s \xi_\infty)^2}{(1-\lambda^2)\alpha_1}((1+\lambda^2)A_1 \alpha_1-(1+\lambda)^2 B+2 \lambda A_2 \alpha_2)\lambda^{n+1}+O(\lambda^{\frac{3n}{2}}).
\end{align*}
Here, again, the additional term comes from the index $k=n+1$ in the sum $\Sigma_n^2$, and we use the identities obtained in Corollary \ref{neffffff} to simplify the above expression. \\

\noindent $\bullet\ \mathbf{Even\ case:}$ Let us now assume that $n=2m$, for some integer $m \geq 0$. In this case, we have an additional bounce, which is also the one  closest to the  orbit $(12)$, and it is on the second obstacle, instead of the first one as previously. We thus get
\begin{align*}
&\Sigma_n^1=
2\ell\left(A_1 (C_{1,\varphi}^u \xi_\infty)^2 +A_2 (C_{2,\varphi}^u \xi_\infty)^2 +B C_{1,\varphi}^uC_{2,\varphi}^u \xi_\infty^2 (1+\lambda^{-1}) \right)\sum_{k=0}^{m} \lambda^{2(n+1-k)}\\
&-\ell  (C_{2,\varphi}^u \xi_\infty)^2A_2 \cdot\lambda^{2(n+1-m)}- 2 \ell B C_{1,\varphi}^uC_{2,\varphi}^u \xi_\infty^2 \lambda^{-1}\cdot \lambda^{2(n+1-m)}+O(\lambda^{2n-m})\\
&=\ell (C_{1,\varphi}^s \xi_\infty )^2 \left(
2A_1  -B\frac{(1+\lambda^{-1})^2}{\alpha_1}+2 A_2 \frac{\lambda^{-1}\alpha_2}{\alpha_1}\right)\frac{\lambda}{1-\lambda^2}\cdot \lambda^{n+1}\\
&-\ell  (C_{1,\varphi}^s \xi_\infty)^2 A_2\frac{\alpha_2}{\lambda \alpha_1} \lambda \cdot\lambda^{n+1}+\ell  (C_{1,\varphi}^s \xi_\infty)^2 B \frac{(1+\lambda^{-1})}{\alpha_1}\lambda^{-1}\cdot \lambda^{n+1}+O(\lambda^{\frac{3n}{2}})\\
&=\frac{\ell (C_{1,\varphi}^s \xi_\infty)^2}{(1-\lambda^2)\alpha_1}(2 \lambda A_1 \alpha_1-(1+\lambda)^2B+(1+\lambda^2) A_2 \alpha_2)\lambda^{n+1}+O(\lambda^{\frac{3n}{2}}).
\end{align*}
The two additional terms that appear in the previous estimates are due to the additional bounce, following the above remark.
Similarly, for $\Sigma_n^2$, we obtain
\begin{align*}
&\Sigma_n^2=\sum_{k=m+1}^{+\infty}
2\ell\left(\left(A_1 (C_{1,\varphi}^s \xi_\infty)^2 +A_2 (C_{2,\varphi}^s \xi_\infty)^2 +B C_{1,\varphi}^s C_{2,\varphi}^s \xi_\infty^2 (1+\lambda)\right) \lambda^{2k} +O(\lambda^{3k})\right)\\
&+\ell A_2 (C_{2,\varphi}^s \xi_\infty)^2 \cdot\lambda^{2m}+2\ell B C_{1,\varphi}^s C_{2,\varphi}^s \xi_\infty^2 \lambda \cdot \lambda^{2m}+O(\lambda^{3m})\\
&=\ell  (C_{1,\varphi}^s \xi_\infty)^2 \left(
2A_1 +2 B(1+\lambda) \frac{C_{2,\varphi}^s}{C_{1,\varphi}^s}+ 2 A_2 \left(\frac{C_{2,\varphi}^s}{C_{1,\varphi}^s}\right)^2\right)\frac{\lambda}{1-\lambda^2} \cdot \lambda^{n+1}\\
&+\ell    (C_{1,\varphi}^s \xi_\infty)^2 A_2\left(\frac{C_{2,\varphi}^s}{C_{1,\varphi}^s}\right)^2\lambda^{-1}  \cdot\lambda^{n+1}+ \ell (C_{1,\varphi}^s \xi_\infty)^2 2B  \frac{C_{2,\varphi}^s}{C_{1,\varphi}^s}  \cdot \lambda^{n+1} +O(\lambda^{\frac{3n}{2}})\\
&=\frac{\ell  (C_{1,\varphi}^s \xi_\infty)^2}{1-\lambda^2} \left(
2\lambda A_1 -2B(1+\lambda) \frac{1+\lambda}{2\alpha_1}+ (\lambda+\lambda^{-1}) A_2 \frac{\lambda \alpha_2}{\alpha_1}\right) \cdot \lambda^{n+1}+O(\lambda^{\frac{3n}{2}})\\
&=\frac{\ell (C_{1,\varphi}^s \xi_\infty)^2}{(1-\lambda^2)\alpha_1}(2 \lambda A_1 \alpha_1-(1+\lambda)^2 B+(1+\lambda^2) A_2 \alpha_2)\lambda^{n+1}+O(\lambda^{\frac{3n}{2}}).
\end{align*}
We conclude the proof by adding up the expansions of $\Sigma_n^1$ and $\Sigma_n^2$ we obtained  for $n$ even and odd.
\end{proof}

\subsection{Marked Length Spectrum and geometry}\label{seubcdoeg}

In this part, we keep the same notations as previously, and we derive some dynamical and geometric consequences from the expansions obtained in Proposition \ref{main proprorop}. Recall that for $j \in \{1,2\}$, we have
$$
\alpha_j=\frac{\ell}{R_j}+1,\qquad  A_j=\frac{R_j}{\ell}\left(1+\frac{R_j}{\ell}\right),\qquad\text{and}\qquad B=\frac{R_1}{\ell}\cdot \frac{R_2}{\ell}.
$$
In particular, we see that
$$
A_j \alpha_j=\left(1+\frac{R_j}{\ell}\right)^2.
$$
By \eqref{estim lhnd} and Proposition \ref{main proprorop}, we deduce that for $n \gg 1$,
\begin{equation}\label{expression difference e}
\mathcal{L}(h_n)-(n+1)\mathcal{L}(\sigma)-\mathcal{L}^\infty= -\ell \xi_\infty^2 C_\varphi^s \cdot  \mathcal{Q}\left(\frac{R_i}{\ell},\frac{R_j}{\ell}\right)\cdot \lambda^{n+1}+O(\lambda^{\frac{3n}{2}})<0,
\end{equation}
where $(i,j)=(1,2)$, when $n$ is odd, and $(i,j)=(2,1)$, when $n$ is even, and where $\mathcal{Q}$ is the quadratic form
defined as follows:
\begin{equation*}
\mathcal{Q}(X,Y):=(1+\lambda^2) (1+X)^2 - (1+\lambda)^2 XY + 2\lambda(1+ Y)^2.
\end{equation*}

\begin{lemma}\label{lemma lyduepno}
We have
$$
\mathrm{LE}(\sigma)=\frac{1}{2}\log(\mu)=-\frac{1}{2}\log(\lambda)=-\lim_{n \to+\infty} \frac{1}{2n} \log\left( -\mathcal{L}(h_n)+(n+1)\mathcal{L}(\sigma)+\mathcal{L}^\infty\right).
$$
In particular, the value of the Lyapunov exponent of $\sigma$ is determined by the Marked Length Spectrum.
\end{lemma}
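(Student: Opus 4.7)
The plan is to extract the limit directly from the asymptotic expansion established in Proposition \ref{main proprorop}, which was summarized in \eqref{expression difference e}. All the genuinely analytical work has been done upstream; what remains is algebraic manipulation of logarithms together with an observation that every quantity inside the limit is in fact accessible from the Marked Length Spectrum.

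First I would rewrite \eqref{expression difference e} in the factored form
\begin{equation*}
-\mathcal{L}(h_n) + (n+1)\mathcal{L}(\sigma) + \mathcal{L}^\infty \;=\; K_n \cdot \lambda^{n+1}\bigl(1 + O(\lambda^{n/2})\bigr),
\end{equation*}
where $K_n = \ell\, \xi_\infty^2\, C_\varphi^s \cdot \mathcal{Q}(R_i/\ell, R_j/\ell)$ with $(i,j)=(1,2)$ or $(2,1)$ according to the parity of $n$. The positivity of $K_n$ is exactly the sign information recorded in \eqref{expression difference e} (equivalently, it follows from the fact that the periodic orbits $h_n$ are minimizers of the length functional, which forces $\Sigma_n > 0$). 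In particular, $K_n$ takes only two possible positive values and is therefore bounded above and below away from $0$ and $\infty$ uniformly in $n$.

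Taking the logarithm of the displayed identity yields
\begin{equation*}
\log\bigl(-\mathcal{L}(h_n) + (n+1)\mathcal{L}(\sigma) + \mathcal{L}^\infty\bigr) \;=\; \log K_n + (n+1)\log\lambda + \log\bigl(1 + O(\lambda^{n/2})\bigr).
\end{equation*}
Dividing by $2n$ and letting $n \to +\infty$, the first summand vanishes because $\log K_n$ stays bounded, the middle summand converges to $\tfrac{1}{2}\log\lambda$, and the last summand is $O(\lambda^{n/2}/n)$, which also tends to $0$. Negating both sides produces the asserted identity $\mathrm{LE}(\sigma) = -\tfrac{1}{2}\log\lambda = \tfrac{1}{2}\log\mu$.

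For the second assertion (that $\mathrm{LE}(\sigma)$ is determined by $\mathcal{L}$), I would observe that every ingredient inside the limit is visibly a function of marked lengths alone: $\mathcal{L}(\sigma) = \mathcal{L}(12)$ and $\mathcal{L}(h_n) = \mathcal{L}(32(12)^n)$ are values of $\mathcal{L}$ on admissible finite words, while $\mathcal{L}^\infty$ was \emph{defined} in the excerpt as $\lim_{n\to\infty}\bigl(\mathcal{L}(h_n) - (n+1)\mathcal{L}(\sigma)\bigr)$, again a combination of marked lengths. Hence the whole right-hand side is a spectral invariant. There is no real obstacle in this proof: Proposition \ref{main proprorop} carries the entire analytic burden, and the main point to verify carefully is only that $K_n$ is bounded away from $0$ uniformly in $n$, so that the $\log K_n/(2n)$ term is harmless in the limit.
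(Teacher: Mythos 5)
Your proof is correct and takes essentially the same approach as the paper, whose entire argument is that the lemma is a direct consequence of \eqref{expression difference e} together with the observation that every term on the right-hand side is recoverable from the Marked Length Spectrum; you have simply spelled out the logarithm manipulation and correctly noted that the prefactor takes only two positive values according to the parity of $n$, so it disappears after dividing by $2n$.
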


\begin{proof}
This is a direct consequence of \eqref{expression difference e}. Note that  for each integer $n \geq 0$, the terms which appear in right hand side of the above equality can be recovered from the Marked Length Spectrum of the billiard table $\mathcal{D}$.
\end{proof}

\begin{corollary}\label{lem easy thr dis}
The Marked Length Spectrum determines completely the geometry in the case where the three obstacles are round discs. More generally, the Marked Length Spectrum determines completely the geometry of billiard tables as follows:
\begin{itemize}
\item the table is obtained by removing from the plane  $m\geq 3$ obstacles $\mathcal{O}_1,\dots,\mathcal{O}_m$ whose boundaries are circles;
\item  the non-eclipse condition is satisfied, i.e., the convex hull of any two obstacles is disjoint from the other obstacles.
\end{itemize}
\end{corollary}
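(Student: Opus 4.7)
The plan is to leverage the fact that, for round discs, the curvature is constant along each boundary, so that Corollary~\ref{main corr} applied to period-two orbits yields at once all radii $r_i$ and, via an elementary length formula, all distances between centers; standard Euclidean rigidity will then finish the reconstruction.

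First I would recover each radius $r_i$. For any pair $i\neq j$, the non-eclipse condition guarantees that the admissible word $(ij)$ describes a genuine period-two orbit, with bouncing points on $\partial\mathcal{O}_i$ and $\partial\mathcal{O}_j$ respectively. By Corollary~\ref{main corr}, the Marked Length Spectrum determines the curvatures at those two points. Since each obstacle is a round disc, these curvatures are simply $1/r_i$ and $1/r_j$. As $m\geq 3$, every index $i\in\{1,\dots,m\}$ appears in some admissible period-two orbit, so every $r_i$ is determined.

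Next I would recover each pairwise distance $d_{ij}$ between centers. For two disjoint round discs, the closest boundary points lie on the segment joining their centers, so the period-two orbit coded by $(ij)$ has length $\mathcal{L}(ij)=2(d_{ij}-r_i-r_j)$. The left-hand side is part of the Marked Length Spectrum and the radii are now known, so $d_{ij}$ is determined for every pair.

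Finally, I would pin down the geometric configuration. Given the full matrix of pairwise distances between $m\geq 3$ points in $\R^2$, a classical reconstruction argument (place the center of $\mathcal{O}_1$ at the origin and the center of $\mathcal{O}_2$ on the positive $x$-axis; the remaining centers are then determined by their distances to the first two up to a single global reflection across the $x$-axis, which is itself an isometry of $\R^2$) recovers the configuration of centers up to isometry. Combined with the known radii, this reconstructs $\mathcal{D}$ up to isometry. I do not foresee a substantive obstacle; the only mildly delicate point is to check that each admissible $(ij)$ does produce the expected period-two orbit with the claimed length formula, which is immediate from the strict convexity of each disc together with the non-eclipse condition.
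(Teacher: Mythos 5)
Your proposal is correct, but it takes a genuinely different (and in a sense stronger-than-necessary) route from the paper's. You invoke Corollary~\ref{main corr} (equivalently Corollary~\ref{coro expan}), i.e.\ the asymmetric recovery of the two curvatures at the bouncing points of a single period-two orbit, which for discs immediately yields each radius $r_i$ from any one orbit $(ij)$; the distances between centers then follow from $\mathcal{L}(ij)=2(d_{ij}-r_i-r_j)$ and the configuration is rigid by the classical distance-matrix argument. The paper deliberately avoids the asymmetric machinery here: it uses only Lemma~\ref{lemma lyduepno} (the Lyapunov exponent $\lambda(ij)$ is spectrally determined) together with the \emph{symmetric} trace identity $4\bigl(\tfrac{\mathcal{L}(ij)}{2R_i}+1\bigr)\bigl(\tfrac{\mathcal{L}(ij)}{2R_j}+1\bigr)=\lambda(ij)+2+\lambda(ij)^{-1}$, which gives one equation per pair; for round discs the curvature is globally constant on each $\Gamma_i$, so the three equations coming from the three period-two orbits of a triple determine $R_1,R_2,R_3$. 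The paper even remarks afterwards that this symmetric bookkeeping suffices \emph{only} because the obstacles are discs — that observation is the point of placing the corollary before Corollary~\ref{coro expan}. What your approach buys is brevity and independence from the combinatorial coupling of the three orbits; what the paper's approach buys is the weaker input (Lyapunov exponents only, no parity/asymmetry analysis). Two minor points to tighten: your reduction to ``a single global reflection'' needs the distances among the remaining centers (or to a third, non-collinear center — non-collinearity being guaranteed by the non-eclipse condition) and not merely the distances to the first two, exactly as in the paper's inductive step for $m\geq 4$; and you should note that non-eclipse is what guarantees that the minimizing segment between $\mathcal{O}_i$ and $\mathcal{O}_j$ is unobstructed, so that $(ij)$ is indeed realized as a period-two orbit for \emph{every} pair.
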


\begin{proof}
Let us first consider the case of three obstacles $\mathcal{O}_1,\mathcal{O}_2,\mathcal{O}_3$ with circular boundaries. For each pair $\{i,j\} \in \{1,2,3\}$, $i\neq j$, we can determine the Lyapunov exponent $\mathrm{LE}(ij)=-\frac{1}{2}\log (\lambda(ij))$ according to Lemma \ref{lemma lyduepno}.   As in \eqref{equation recatrace}, it holds
\begin{equation}\label{eq gen ij}
4\alpha_i(ij) \alpha_j(ij)=4 \left(\frac{\mathcal{L}(ij)}{2R_i}+1\right)\left(\frac{\mathcal{L}(ij)}{2R_j}+1\right)=\lambda(ij)+2+\lambda(ij)^{-1},
\end{equation}
where $R_i,R_j$ are the respective radii of curvature at the two points of $(ij)$, and $\mathcal{L}(ij)$ is the length of the periodic orbit $(ij)$.  By the above relation, we can compute the product $\alpha_i(ij)\alpha_j(ij)$ for each pair $i\neq j$, and thus, the value of $R_1,R_2,R_3$: 
indeed, we obtain three equations between the three unknown quantities $R_1,R_2,R_3$ (the lengths $\mathcal{L}(12),\mathcal{L}(13),\mathcal{L}(23)$ are already known).
As a result,   the geometry of the billiard table is entirely determined, up to isometries (compositions of translations, rotations, and reflections). 

In the case of billiard tables with $m\geq 4$ obstacles satisfying the above conditions, we argue as follows. The geometry of the triple $\{\mathcal{O}_1, \mathcal{O}_{2},\mathcal{O}_{3}\}$ is  determined by the above procedure. 
For the obstacle $\obs_4$, if we consider the triple $\{\obs_2,\obs_3,\obs_4\}$, then we have two possible positions $P_1(23),P_2(23)$ for the center of $\obs_4$, which are symmetric of each other by the reflection along the line $D_{23}$ through the centers of $\obs_2$ and $\obs_3$. In particular, $D_{23}$ is the line segment bisector of $[P_1(23),P_2(23)]$. If we consider  the triple $\{\obs_1,\obs_2,\obs_4\}$ instead, then we have two possible positions $P_1(12),P_2(12)$ for the center of $\obs_4$. Now, if $\{P_1(12),P_2(12)\}=\{P_1(23),P_2(23)\}$, it means that the line $D_{12}$ through the centers of $\obs_1$ and $\obs_2$  is also the line segment bisector of $[P_1(23),P_2(23)]$, and thus, it coincides with $D_{23}$. But this is impossible, since by the non-eclipse condition, the centers of $\obs_1,\obs_2,\obs_3$ cannot be aligned.
Therefore, there is exactly one possible position for the center of $\obs_4$ which is coherent with the previous choices.
By repeating this procedure, we  can inductively recover the geometry of the billiard table, up to isometries.
\end{proof}

In the proof of the previous result, we see that in order to recover the
radius of curvature at each bouncing point of period two orbits, it is
sufficient to combine symmetric information between the different
obstacles. Yet, this is possible only because the obstacles are round
discs, and then, for each obstacle, the radius of curvature is the same
at the two points of the obstacle which are periodic and of period
two. For more general obstacles, we have to use some additional
information to recover the radius of curvature. To achieve this, we use
the asymmetries of periodic orbits $(h_n)_{n \geq 0}$, in connection
with the estimates of Proposition \ref{main proprorop}.

\begin{corollary}\label{coro expan}
Let $\mathcal{D}$ be a billiard table formed by three obstacles as above and whose Marked Length Spectrum is known. Then, for each $i\neq j \in \{1,2,3\}$, we can determine the radii  of curvature $R_i=\mathcal{K}_i^{-1}$ and $R_j=\mathcal{K}_j^{-1}$ at the two bouncing points of the orbit labelled by $(ij)$.
\end{corollary}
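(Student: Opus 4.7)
The plan is to use the asymmetry between the odd and even cases of Proposition \ref{main proprorop} to produce enough equations to pin down the unordered pair $\{R_i, R_j\}$, and then use the parity convention to distinguish the two. Fix a pair $(ij)$, set $\sigma := (ij)$, $\ell := \mathcal{L}(\sigma)/2$, and let $k$ denote the remaining obstacle. Applying the construction of Section \ref{section estiitm} to the triple $\{i,j,k\}$, the orbits $h_n := (kj(ij)^n)$ have lengths read directly from the Marked Length Spectrum, so $\mathcal{L}(\sigma)$, $\mathcal{L}(h_n)$, and hence $\mathcal{L}^\infty := \lim_{n\to\infty}[\mathcal{L}(h_n) - (n+1)\mathcal{L}(\sigma)]$ are all known. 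By Lemma \ref{lemma lyduepno} we also recover $\lambda = \lambda(\sigma) \in (0,1)$.

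With $\lambda$ in hand, the trace identity \eqref{equation recatrace} yields the first, symmetric relation
$$
\Bigl(1 + \tfrac{R_i}{\ell}\Bigr)\Bigl(1 + \tfrac{R_j}{\ell}\Bigr) = \frac{(1+\lambda)^2}{4\lambda}.
$$
This alone cannot separate $R_i$ from $R_j$, so next I would extract the two asymptotic constants
$$
c_{\mathrm{odd}} := \lim_{\substack{n\to+\infty\\ n\ \mathrm{odd}}} \lambda^{-(n+1)}\bigl[-\mathcal{L}(h_n) + (n+1)\mathcal{L}(\sigma) + \mathcal{L}^\infty\bigr], \qquad c_{\mathrm{even}} := \lim_{\substack{n\to+\infty\\ n\ \mathrm{even}}} \lambda^{-(n+1)}\bigl[-\mathcal{L}(h_n) + (n+1)\mathcal{L}(\sigma) + \mathcal{L}^\infty\bigr],
$$
whose existence is granted by \eqref{expression difference e}. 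Both share the same unknown positive prefactor $C = \ell\xi_\infty^2 C_\varphi^s$, but involve the entries of $\mathcal{Q}$ in opposite orders. Forming the ratio $r := c_{\mathrm{odd}}/c_{\mathrm{even}}$ cancels $C$ and yields the second, asymmetric equation $\mathcal{Q}(X,Y) = r\,\mathcal{Q}(Y,X)$, where $X := R_i/\ell$ and $Y := R_j/\ell$.

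To solve the system I would pass to the symmetric variables $S := X+Y$ and $D := X-Y$. The product relation becomes $D^2 = (S+2)^2 - 4K$ with $K := (1+\lambda)^2/(4\lambda)$, and a short computation exploits the clean factorizations
$$
\mathcal{Q}(X,Y) - \mathcal{Q}(Y,X) = (1-\lambda)^2\,D\,(S+2), \qquad \mathcal{Q}(X,Y) + \mathcal{Q}(Y,X) = (1+\lambda)^2\bigl[D^2 + 2(S+1)\bigr],
$$
which transforms $(r-1)/(r+1)$ into a scalar identity in $S$ alone once $|D|$ is substituted. The sign of $D$, i.e.\ the question whether $R_i > R_j$ or $R_i < R_j$, is fixed by the parity convention: the parity of $n$ encodes on which of the two obstacles the closest bounce of $h_n$ to $\sigma$ lies, which in turn determines which of $R_i, R_j$ sits in the first slot of $\mathcal{Q}$.

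The main obstacle will be verifying that the resulting scalar equation admits a unique admissible solution $S$ (admissible meaning $S > 0$ with $(S+2)^2 \geq 4K$). I would establish this by showing strict monotonicity in $S$ of the right-hand side of $(r-1)/(r+1) = (1-\lambda)^2 D(S+2)/\bigl\{(1+\lambda)^2[S^2 + 6S + 6 - 4K]\bigr\}$ on its domain, with the sign of $D$ held fixed; since $\lambda<1$ forces $K>1$, the admissible range of $S$ is bounded below and the question reduces to a one-variable calculus exercise. Granting uniqueness, $(S,|D|)$ is determined, hence $\{X,Y\}$ as the two roots of $T^2 - ST + (S^2-D^2)/4 = 0$, and the sign of $D$ identifies which root is $R_i/\ell$ and which is $R_j/\ell$. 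Running the procedure for each of the three pairs completes the proof.
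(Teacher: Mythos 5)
Your strategy coincides with the paper's: recover $\lambda$ from Lemma \ref{lemma lyduepno}, take the symmetric relation from the trace identity \eqref{eq gen ij}, take the asymmetric relation $\mathcal{Q}(X,Y)=r\,\mathcal{Q}(Y,X)$ from the ratio of the odd and even asymptotic constants in \eqref{expression difference e}, and solve the resulting system of two equations in $X=R_i/\ell$, $Y=R_j/\ell$. Your algebra is correct as far as it goes: both factorizations of $\mathcal{Q}(X,Y)\mp\mathcal{Q}(Y,X)$ in the variables $S=X+Y$, $D=X-Y$ check out, and the sign of $D$ is indeed determined (it is the sign of $r-1$, since $\mathcal{Q}>0$ on the positive quadrant). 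The paper instead eliminates $X_2$ from the quadratic $\mathcal{Q}(X_1,X_2)-\rho\,\mathcal{Q}(X_2,X_1)=0$ and substitutes into the trace relation; your symmetric-function reduction is a cleaner packaging of the same system.

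The gap is exactly in the step you flag as the main obstacle. The function you need to be injective, namely $u\mapsto f(u):=u\sqrt{u^2-4K}\big/(u^2+2u-2-4K)$ with $u=S+2$ and $K=(1+\lambda)^2/(4\lambda)$, restricted to the admissible range $2\sqrt{K}\le u<K+1$ (the upper bound comes from $XY=K-1-S>0$, which you omit), is \emph{not} monotone once $K$ is moderately large. Concretely, for $K=10$ (i.e.\ $\lambda=19-6\sqrt{10}$) the equation $f(u)=1$ is equivalent to $(u-7)(u^2-3u-63)=0$ and has the two admissible roots $u=7$ and $u=(3+\sqrt{261})/2\approx 9.578$, corresponding to $(X,Y)=(4,1)$ and $(X,Y)\approx(7.385,\,0.193)$. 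Using $\lambda^2=38\lambda-1$ one checks that both pairs satisfy $(1+X)(1+Y)=10$ and $\mathcal{Q}(X,Y)=19\,\mathcal{Q}(Y,X)$ exactly, so your one-variable equation has two admissible solutions and the proposed monotonicity argument cannot be completed. This is not merely a defect of your reduction: it shows that the pair of equations used here (and in the paper) genuinely admits more than one solution with $X,Y>0$, so uniqueness requires some further spectral input; the paper's own justification --- that the two conics are ``independent'' when $R_1\ne R_2$ --- only excludes proportionality and does not address multiplicity either. Still, the specific verification you propose is the step that would fail.
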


\begin{proof}
Let us assume that the Marked Length Spectrum of the billiard table is known. We focus on the   orbit $(12)$, and we set  $X_1:=\frac{R_1}{\ell}$,  $X_2:=\frac{R_2}{\ell}$.  \textit{A priori}, the value of $\xi_\infty^2 C_{\varphi}^s$ in \eqref{expression difference e} is not known, hence we consider the ratio
$
\rho=\rho(12):=\frac{\mathcal{Q}(X_1,X_2)}{\mathcal{Q}(X_2,X_1)}
$
to eliminate it. Taking the ratio of the quantities in \eqref{expression difference e} associated to two consecutive integers $n$ and $n+1$, with $n \gg 1$, we deduce that the value of $\rho$ is determined by the Marked Length Spectrum,
which yields the equation
\begin{equation}\label{first eq quad}
\mathcal{Q}(X_1,X_2)-\rho\mathcal{Q}(X_2,X_1)=0.
\end{equation}
We see $\mathcal{Q}(X_1,X_2)-\rho\mathcal{Q}(X_2,X_1)\in \R[X_1][X_2]$ as a quadratic polynomial with coefficients in $\R[X_1]$ ($X_1$ is unknown). Therefore, the knowledge of $\rho$ is enough to deduce the expression of $X_2=X_2(X_1)$ in terms of $X_1$. Besides,  taking $(ij)=(12)$ in \eqref{eq gen ij}, we deduce  that $X_1$ and $X_2$  satisfy the following equation:
\begin{equation}\label{second feko}
4\lambda \left(1+X_1\right)\left(1+X_2\right)-(1+\lambda)^2 X_1 X_2=0.
\end{equation}
We claim that from the last equation,  it is possible to determine the value of $X_1$ and $X_2$, since we already know the expression of $X_2$ in terms of $X_1$.
Indeed, \eqref{first eq quad} can be rewritten as
\begin{equation}\label{deuxieme eq quad}
(1-2 \rho\lambda+\lambda^2)(1+X_1)^2 + (1-\rho)(1+\lambda)^2X_1X_2-\rho(1-2\rho^{-1}\lambda+\lambda^2)(1+X_2)^2=0.
\end{equation}
We have two cases: either $R_1=R_2$, and then, the common value of $R_1$ and $R_2$ is determined by the equation $4\lambda (1+X_1)^2-(1+\lambda)^2 X_1^2=0$ that comes from \eqref{second feko}, or $R_1 \neq R_2$. In the latter case, the two equations \eqref{second feko} and \eqref{deuxieme eq quad} are  independent.
Indeed, \eqref{second feko} is unchanged if we permute $X_1$ and $X_2$, while \eqref{deuxieme eq quad} is if and only if $(1+\rho)(1-\lambda)^2[(1+X_1)^2-(1+X_2)^2]=0$, which cannot happen, since $\rho\geq 0$,  $\lambda<1$ (by hyperbolicity), and $X_1\neq X_2$. Therefore, in the case where $R_1\neq R_2$, we can recover the value of $X_1$ and $X_2$, hence of $R_1, R_2$, by plugging the expression of $X_2=X_2(X_1)$ we obtained    into \eqref{deuxieme eq quad}, and then, solving in $X_1$.
\end{proof}

\textit{A posteriori}, we deduce that the quantities $\xi_\infty,C_{\varphi}^s$ are also  spectrally determined:
\begin{corollary}\label{rema spectral inv}
The value of the parameter $\xi_\infty\in \R^*$ and of the constant $C_{\varphi}^s\in \R_+^*$ introduced respectively in Proposition \ref{prop sympt hoc}  and  in \eqref{def constant sym} are  determined by the Marked Length Spectrum.
\end{corollary}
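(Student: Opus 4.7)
The plan is to reconstruct $C_\varphi^s$ and $\xi_\infty^2$ step by step from the Marked Length Spectrum, by assembling the spectral invariants established in the preceding results.

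First, note that all the ``local'' geometric data at the orbit $\sigma=(12)$ is now available from the MLS: the length $\ell = \mathcal{L}(12)/2$ is trivially spectral, the eigenvalue $\lambda$ is spectral by Lemma~\ref{lemma lyduepno}, and the radii $R_1, R_2$ (hence $\alpha_1 = \ell/R_1+1$, $\alpha_2 = \ell/R_2+1$, and $\gamma$) are spectral by Corollary~\ref{coro expan}. By formula \eqref{matrice df carre}, the entire matrix $D_{(0_1,0)}\mathcal{F}^2$ is therefore an explicit function of MLS-determined quantities.

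Next, since $D_{(0_1,0)}\mathcal{F}^2$ has known entries and known eigenvalue $\lambda$, its unit stable eigenvector $v_1^s = (C_{1,s}^s, C_{1,\varphi}^s) \in \mathrm{E}_{\mathcal{F}^2}^s(0_1,0)$ is explicitly computable up to sign. In particular $(C_{1,\varphi}^s)^2$ is a spectral invariant, and by definition \eqref{def constant sym},
$$
C_\varphi^s = \frac{2(C_{1,\varphi}^s)^2}{(1-\lambda^2)\alpha_1}
$$
is determined by the MLS.

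Finally, to recover $\xi_\infty^2$, invoke the asymptotic identity \eqref{expression difference e}. The quantity on the left-hand side, $\mathcal{L}(h_n) - (n+1)\mathcal{L}(\sigma) - \mathcal{L}^\infty$, is a spectral invariant: $\mathcal{L}(h_n)$ and $\mathcal{L}(\sigma)$ are MLS data, and $\mathcal{L}^\infty = \lim_{n \to \infty} (\mathcal{L}(h_n) - (n+1)\mathcal{L}(\sigma))$ inherits spectrality from the limit. On the right-hand side everything except $\xi_\infty^2$ is now accounted for, since $\ell$ and $C_\varphi^s$ have been determined in the previous steps, and the quadratic form $\mathcal{Q}$ evaluated at $(R_i/\ell, R_j/\ell)$ depends only on $\lambda, R_1, R_2$. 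Thus, restricting for instance to odd $n \to \infty$,
$$
\xi_\infty^2 = -\lim_{\substack{n \to +\infty \\ n \text{ odd}}} \frac{\mathcal{L}(h_n) - (n+1)\mathcal{L}(\sigma) - \mathcal{L}^\infty}{\ell\, C_\varphi^s\, \mathcal{Q}(R_1/\ell, R_2/\ell)\, \lambda^{n+1}}
$$
is spectrally determined, giving $|\xi_\infty|$ and completing the argument.

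No step of this plan presents a genuine obstacle; the proof is essentially an assembly of spectral invariants already isolated in Corollaries~\ref{lem easy thr dis}--\ref{coro expan} together with the explicit formulas of Subsection~5.1. The only subtlety is the observation that the auxiliary linearization $\Psi$ enters only through $\xi_\infty^2$ and $C_\varphi^s$, which are canonical once $L_1$ is normalized by $\|v_1^s\| = 1$.
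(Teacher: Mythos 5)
Your proposal is correct and follows essentially the same route as the paper's proof: recover $\ell,\lambda,R_1,R_2$ from the Marked Length Spectrum, deduce the matrix $D_{(0_1,0)}\mathcal{F}^2$ and hence the unit stable eigenvector and $C_{\varphi}^s$ via \eqref{def constant sym}, then extract $\xi_\infty^2$ from the asymptotic \eqref{expression difference e}. Your explicit remark that only $(C_{1,\varphi}^s)^2$ and $\xi_\infty^2$ (hence $|\xi_\infty|$) are canonically determined is, if anything, slightly more careful about the sign ambiguity in the normalization of $v_1^s$ than the paper's own wording.
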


\begin{proof}
	The quantities $\ell$, $\lambda$, $\alpha_i=\frac{\ell}{R_i}+1$, $\alpha_j=\frac{\ell}{R_j}+1$ and $\mathcal{Q}\big(\frac{R_{i \vphantom{j}}}{\ell},\frac{R_j}{\ell}\big)$ are determined by the Marked Length Spectrum, according to Lemma \ref{lemma lyduepno} and Corollary \ref{coro expan}. By \eqref{matrice df carre}, the matrix of $D_{(0_1,0)}\mathcal{F}^2$ is also determined by the Marked Length Spectrum, as well as the stable space $\mathrm{E}_{\mathcal{F}^2}^s(0_1,0)$ and the value of the constant $C_{1,\varphi}^s$, which by definition is equal to the second component of the unitary vector $v_1^s=L_1^{-1}(1,0) \in \mathrm{E}_{\mathcal{F}^2}^s(0_1,0)$. By \eqref{def constant sym}, we can thus recover the value of the constant $C_{\varphi}^s$. On the other hand, it follows from  \eqref{expression difference e} that the value of $\xi_\infty^2 C_{\varphi}^s$ is determined by  the Marked Length Spectrum. We conclude that the value of $\xi_\infty$ can also be recovered.
\end{proof}

\section{General periodic orbits and Marked Lyapunov Spectrum}\label{section lyaapu}

In the following, we show that part of the previous arguments can be adapted, and that we can reconstruct the Marked Lyapunov Spectrum from the Marked Length Spectrum (see \eqref{marked spectrum} and \eqref{maked lepnov}  for the definitions).

Recall that given a word $\rho=(\rho_1\dots\rho_{r-1}\rho_r)$, we let  $\overline{\rho}$ be the transposed word
$$
\overline{\rho}:=(\rho_r \rho_{r-1}\dots\rho_1).
$$

We consider a periodic orbit of period $p \geq 2$, encoded by an admissible word\footnote{i.e., such that $\sigma_{j}\neq \sigma_{j+1}$ for $j\in \{1,\dots,p-1\}$ and such that $\sigma_1 \neq \sigma_p$.} $\sigma=(\sigma_1\sigma_2\dots\sigma_p) \in \{1,2,3\}^p$.
Let us denote by $(x^{+,\sigma}(k))_{1 \leq k \leq p}$ the points in this orbit, where $x^+(k)=x^{+,\sigma}(k)=(s^+(k),\varphi^+(k))$
is represented by a position and an angle coordinates. As previously, for all $k \in \Z$, we let $x^+(k):=x^+(\overline{k})$, where $\overline{k}:=k\text{ mod } p$, and similarly for $s^+(k),\varphi^+(k)$. We label the points in such a way that $\gamma(s^+(k)) \in \Gamma_{\sigma_{\overline{k}}}$. Moreover, we let $(x^{-,\sigma}(k))_{1 \leq k \leq p}$ be the coordinates of the points in the same orbit, but traversed backwards, so that the new orbit is encoded by the word $\overline{\sigma}$.  Analogously, we let $x^-(k)=x^{-,\sigma}(k)=(s^-(k),\varphi^-(k))$, and we choose the labelling  in such a way that $\gamma(s^-(k)) \in \Gamma_{\sigma_{\overline{1-k}}}$. Equivalently, we have $x^-(k)=\mathcal{I}(x^+(p+1-k))$, for all $k \in \{1,\dots,p\}$.

Let $\tau=(\tau^-,\tau^+)\in \{1,2,3\}^2$ be such that $\tau^+\neq \sigma_{1}$ and $\tau^-\neq \sigma_{p}$.
In the same way as previously, we define a sequence of  periodic orbits $(h_n')_{n \geq 0}$, where $h_n'=h_n'(\sigma,\tau)$ has period $2np+2$ and is labelled by the words
\begin{align*}
h_{n}^+&=h_{n}^+(\sigma,\tau):=(\tau^+\sigma^n\tau^-\overline{\sigma}^n):=(\tau^+\underbrace{\sigma\sigma\dots \sigma}_\text{n\ \text{times}}\tau^-\underbrace{\overline{\sigma\sigma\dots \sigma}}_\text{n\ \text{times}}),\\
h_{n}^-&=h_{n}^-(\sigma,\tau):=(\tau^-\overline{\sigma}^n\tau^+\sigma^n):=(\tau^-\underbrace{\overline{\sigma\sigma\dots \sigma}}_\text{n\ \text{times}}\tau^+\underbrace{\sigma\sigma\dots \sigma}_\text{n\ \text{times}}).
\end{align*}

For each $n \geq 0$, we have two sets of coordinates  $(x_{n}^{\pm,\sigma,\tau}(k))_{-np \leq k \leq np+1}$ for the points
in the periodic orbit $h_{n}'$, with
$x_{n}^{\pm,\sigma,\tau}(k)=(s_{n}^{\pm,\sigma,\tau}(k),\varphi_{n}^{\pm,\sigma,\tau}(k))$.  For ease of notation, in what follows, we will drop
$\sigma,\tau$, which will be considered fixed, and write
$x_{n}^\pm(k)=(s_{n}^\pm(k),\varphi_{n}^\pm(k))$. We order the points in such
a way that
\begin{itemize}
\item $x_{n}^\pm(0)$ is the point associated with the symbol $\tau^\pm$;
\item for $k=1,\dots,np$, the point $x_{n}^+(\pm k)$ is associated with the symbol $\sigma_{\overline{k}}$;
\item for $k=1,\dots,np$, the point $x_{n}^-(\pm k)$ is associated with the symbol $\sigma_{\overline{1-k}}$;
\item $x_{n}^\pm (np+1)$ is the point associated with the symbol $\tau^\mp$.
\end{itemize}
We also extend the notations by setting
$x_{n}^\pm(k):=x_{n}^\pm(k\text{ mod } (2np+2))$ for any integer
$k \in \Z$, and similarly for $s_{n}^\pm$ and $\varphi_{n}^\pm$.

For all $k\in \{1,\dots,p\}$, we let  $\ell^\pm(k):=h(s^\pm(k),s^\pm(k+1))$ be the length of the line segment  between $x^\pm(k)$ and $x^\pm(k+1)$, and we denote by $\mathcal{K}^\pm(k)\in \R$ the curvature at $x^\pm(k)$. We have
\begin{equation}\label{def diffff}
D_{x^\pm(k)}\mathcal{F}=-\frac{1}{\cos(\varphi^\pm(k+1))}
\begin{pmatrix}
\alpha^\pm(k) & \ell^\pm(k)\\
\delta^\pm(k) & \gamma^\pm(k)
\end{pmatrix},
\end{equation}
where
\begin{align*}
\alpha^\pm(k)&:=\ell^\pm(k) \mathcal{K}^\pm(k)+\cos(\varphi^\pm(k)),\\
\gamma^\pm(k)&:=\ell^\pm(k) \mathcal{K}^\pm(k+1)+\cos(\varphi^\pm(k+1)),\\
\delta^\pm(k)&:=\ell^\pm(k) \mathcal{K}^\pm(k) \mathcal{K}^\pm(k+1)+ \mathcal{K}^\pm(k)\cos(\varphi^\pm(k+1))+\mathcal{K}^\pm(k+1)\cos(\varphi^\pm(k)).
\end{align*}
For any $k \in \{1,\dots,p\}$, we have
$$
D_{x^\pm(k)}\mathcal{F}^p=D_{x^\pm(k+p-1)}\mathcal{F} \circ D_{x^\pm(k+p-2)}\mathcal{F}\circ \cdots\circ D_{x^\pm(k)}\mathcal{F} \in \mathrm{SL}(2,\R),
$$
and we denote by $\lambda=\lambda(\sigma)<1$, resp. $\mu=\mu(\sigma) > 1$, the smallest, resp. largest eigenvalue of $D_{x^\pm(k)}\mathcal{F}^p$ (it is independent of $k$).

Let us also define the two orbits associated with the following infinite words:
\begin{align*}
h_{\infty}^+=h_{\infty}^+(\sigma,\tau)&:=(\overline{\sigma}^\infty \tau^+ \sigma^\infty)=(\dots\overline{\sigma\sigma}\tau^+ \sigma\sigma\dots),\\
h_{\infty}^-=h_{\infty}^-(\sigma,\tau)&:=(\sigma^\infty\tau^-\overline{\sigma }^\infty )=(\dots\sigma\sigma\tau^- \overline{\sigma\sigma}\dots).
\end{align*}
We denote by $(x_{\infty}^{\pm,\sigma,\tau}(k))_{k \in \Z}$ the points in the  orbit $h_{\infty}^\pm(\sigma,\tau)$, with $x_{\infty}^{\pm,\sigma,\tau}(k)=(s_{\infty}^{\pm,\sigma,\tau}(k),\varphi_{\infty}^{\pm,\sigma,\tau}(k))$.  Similarly, we abbreviate the coordinates as $x_{\infty}^\pm(k)=(s_{\infty}^\pm(k),\varphi_{\infty}^\pm(k))$, and we order the points in such a way that
\begin{itemize}
\item $x_{\infty}^\pm(0)$ is the point associated with the symbol $\tau^\pm$;
\item for $k\geq 1$, the point $x_{\infty}^+(\pm k)$ is associated with the symbol $\sigma_{\overline{k}}$;
\item for $k\geq 1$, the point $x_{\infty}^-(\pm k)$ is associated with the symbol $\sigma_{\overline{1-k}}$.
\end{itemize}

\begin{lemma}\label{pelfotk}
  For all $k \in \Z$, and for all $n \geq 1$,
  we have
  \begin{align*}
    x_{ n}^\pm(k) &= x_{n}^\pm(-k), & x_{\infty}^\pm(k)&=x_{\infty}^\pm(-k)
  \end{align*}
  and
  \begin{align*}
    \varphi_{n}^\pm(0)=\varphi_{n}^\pm(np+1)=\varphi_{\infty}^\pm(0)=0.
  \end{align*}
\end{lemma}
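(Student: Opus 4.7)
The plan is to mirror, essentially verbatim, the proof of Lemma~\ref{lemma palind} given in the period two case, since the only ingredients required are the palindromic structure of the symbolic codings, expansivity, and the time-reversal identity $\mathcal{I}\circ\mathcal{F}\circ\mathcal{I}=\mathcal{F}^{-1}$. Fix $n\ge 1$. I would break the argument into three steps, treating $h_n^+$ in detail; the cases $h_n^-$ and $h_\infty^\pm$ are obtained by transposition.

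First I would verify the combinatorial palindromicity of the bi-infinite coding of $h_n'$ based at $x_n^+(0)$, that is, that the symbol read at time $k$ coincides with the symbol read at time $-k$ for every $k\in\Z$. Using
\[
h_n^+ = \tau^+\sigma^n\tau^-\overline{\sigma}^n
\]
and the fact that $\overline{\sigma}$ is the letter-reversal of $\sigma$, one checks that for $1\le k\le np$ the time-$k$ symbol equals $\sigma_{\overline{k}}$ while the time-$(-k)\equiv (2np+2-k)$ symbol lies in the $\overline{\sigma}^n$ block at offset $np+1-k$, and equals $\overline{\sigma}_{\,\overline{np+1-k}}=\sigma_{\overline{k}}$; at $k=np+1$ the common symbol is of course $\tau^-$. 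Exactly the same computation about position $np+1$ gives palindromicity of the coding based at $x_n^+(np+1)$, and the infinite word $\overline{\sigma}^\infty\tau^+\sigma^\infty$ is visibly palindromic about position $0$.

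Next I would deduce the perpendicularity assertions $\varphi_n^\pm(0)=\varphi_n^\pm(np+1)=\varphi_\infty^\pm(0)=0$ from expansivity. For $x_n^+(0)$, the forward $\mathcal{F}$-orbit of $\mathcal{I}(x_n^+(0))$ has symbolic coding equal to the backward coding of $x_n^+(0)$, which by the previous step coincides with the forward coding of $x_n^+(0)$. Expansivity of the conjugate subshift then forces $\mathcal{I}(x_n^+(0))=x_n^+(0)$, i.e.\ $\varphi_n^+(0)=0$. The very same argument applied at $x_n^+(np+1)$, $x_n^-(0)$, $x_n^-(np+1)$ and $x_\infty^\pm(0)$ gives the remaining four vanishing statements.

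Once perpendicularity at the base point is known, $\mathcal{I}$ fixes $x_n^\pm(0)$, so combining with $\mathcal{I}\circ\mathcal{F}^k=\mathcal{F}^{-k}\circ\mathcal{I}$ we get, for every $k\in\Z$,
\[
x_n^\pm(-k) \;=\; \mathcal{F}^{-k}(x_n^\pm(0)) \;=\; \mathcal{F}^{-k}\bigl(\mathcal{I}(x_n^\pm(0))\bigr) \;=\; \mathcal{I}\circ\mathcal{F}^{k}(x_n^\pm(0)) \;=\; \mathcal{I}\bigl(x_n^\pm(k)\bigr),
\]
that is, $s_n^\pm(-k)=s_n^\pm(k)$ and $\varphi_n^\pm(-k)=-\varphi_n^\pm(k)$, which is exactly the natural generalization of the symmetry relation of Lemma~\ref{lemma palind}. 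The corresponding identity for $h_\infty^\pm$ is obtained by an identical computation starting at $x_\infty^\pm(0)$. There is no genuine analytic obstacle: all the work is bookkeeping, namely reconciling the two labelling conventions used for $x_n^+$ and $x_n^-$ (centred at $\tau^+$ versus $\tau^-$) and unwinding the identification $\overline{\sigma}_{\overline{k}}=\sigma_{\overline{1-k}}$ so that the palindromic check around $\tau^-$ reduces to the one around $\tau^+$.
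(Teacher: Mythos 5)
Your proposal is correct and follows essentially the same route as the paper, which simply invokes the palindromic symmetry of the codings, expansivity, and the time-reversal identity $\mathcal{I}\circ\mathcal{F}\circ\mathcal{I}=\mathcal{F}^{-1}$ exactly as in Lemma~\ref{lemma palind}; you merely spell out the combinatorial check that the paper leaves implicit. Note only that what you actually establish is $x_n^\pm(-k)=\mathcal{I}(x_n^\pm(k))$ (equal $s$, opposite $\varphi$), which is the relation the paper itself derives and uses, the bare equality in the displayed statement being best read with the reflection $\mathcal{I}$ understood.
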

\begin{proof}
As in Lemma \ref{lemma palind}, this is a consequence of the palindromic symmetries of the orbits $h_{ n}'$ and $h_{\infty}^\pm$.
Indeed, the identities follow from the fact that the respective future and
  past of $h_{n}'=((h_{n}^+)^\infty)=((h_{n}^-)^\infty)$, $h_{\infty}^\pm$ have the same symbolic coding. Therefore,  by expansivity of the dynamics, we have $x_{ n}^\pm(k)=\mathcal{I}(x_{n}^\pm(k))$ and $x_{\infty}^\pm(k)=\mathcal{I}(x_{\infty}^\pm(k))$, which gives the result.
\end{proof}

In particular, this lemma tells us that we can focus either on the
forward or on the backward orbit of the above orbits.

\subsection{Estimates on the parameters}\label{subsection esit parammm}

\begin{lemma}\label{premier lemmmme deux}
There exists two constants $\Lambda_\pm>1$ such that for any integers $n,r \geq 0$, and for any integer $k\in \{0,\dots,np+1\}$, we have
\begin{equation*}
\|x_{n}^\pm(\epsilon k)-x_{n+r}^\pm(\epsilon  k)\|= O(\Lambda_\pm^{-(np+1)+k}),
\end{equation*}
for $\epsilon \in \{+,-\}$.
\end{lemma}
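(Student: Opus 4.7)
The plan is to adapt the proof of Lemma~\ref{premier lemmmme} to the general periodic orbit setting. By the palindromic symmetry $x_n^\pm(-k) = x_n^\pm(k)$ established in Lemma~\ref{pelfotk}, the estimate for $\epsilon = -$ reduces immediately to that for $\epsilon = +$, so I need only prove the bound for positive indices. Moreover, the arguments for the $+$ and $-$ cases are entirely symmetric (interchanging the roles of $\tau^\pm$ and $\sigma,\overline{\sigma}$); I focus on the $+$ case, which produces a constant $\Lambda_+ > 1$, with $\Lambda_-$ obtained analogously.

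For the $+$ case, Lemma~\ref{pelfotk} gives $\varphi_n^+(0) = \varphi_{n+r}^+(0) = 0$, and both points $\gamma(s_n^+(0))$, $\gamma(s_{n+r}^+(0))$ lie on $\Gamma_{\tau^+}$. First, I would define the initial dispersing wave front $\mathcal{W}_{n,r}^+(0) \subset \mathcal{M}$ as the arc on $\Gamma_{\tau^+}$ connecting these two points, each equipped with zero angle, and set $\mathcal{W}_{n,r}^+(k) := \mathcal{F}^k(\mathcal{W}_{n,r}^+(0))$. Comparing the codings of $h_n^+ = \tau^+ \sigma^n \tau^- \overline{\sigma}^n$ and $h_{n+r}^+$, they agree at the first $np+1$ positions and first diverge at position $np+1$ (where $h_n^+$ reads $\tau^-$ while $h_{n+r}^+$ still reads $\sigma_1$). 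Therefore, for $k \in \{0, 1, \dots, np\}$, $\mathcal{W}_{n,r}^+(k)$ is a single dispersing wave front whose projection lies on $\Gamma_{\sigma_{\overline{k}}}$ (with the convention $\sigma_{\overline{0}} := \tau^+$). Formula~\eqref{expansion first} combined with Lemma~\ref{bounded domm}---which applies to any non-escaping orbit, including $h_n^+$---yields uniform constants $\ell_{\min}, \mathcal{B}_{\min} > 0$ such that $\|D_x \mathcal{F}^k \cdot dx\|_p \geq \Lambda_+^k \|dx\|_p$ on $\mathcal{W}_{n,r}^+(0)$, with $\Lambda_+ := 1 + \ell_{\min}\mathcal{B}_{\min} > 1$.

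At step $k = np+1$, the wave front $\mathcal{W}_{n,r}^+(np+1)$ may split into at most two pieces (on $\Gamma_{\tau^-}$ and $\Gamma_{\sigma_1}$), but its total $p$-length is uniformly bounded by some constant $C > 0$, since the $p$-length of any dispersing wave front is controlled by the arc length of its projection on the obstacles (Footnote~\ref{foonote cinq}). Applying the expansion bound between indices $k$ and $np+1$ then gives
\begin{equation*}
|\mathcal{W}_{n,r}^+(k)| \leq \Lambda_+^{-(np+1-k)}|\mathcal{W}_{n,r}^+(np+1)| \leq C \Lambda_+^{-(np+1)+k}
\end{equation*}
for $k \in \{0, \dots, np\}$. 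A uniform lower bound $c_{\min}^{-1} > 0$ on the cosine of reflection angles along $h_n^+$ (from Lemma~\ref{bounded domm} and the non-eclipse condition, as in the proof of Lemma~\ref{premier lemmmme}) converts $p$-metric length back to Euclidean distance, yielding $\|x_n^+(k) - x_{n+r}^+(k)\| \leq c_{\min}|\mathcal{W}_{n,r}^+(k)| = O(\Lambda_+^{-(np+1)+k})$; the case $k = np+1$ follows trivially from boundedness of the collision space. The main subtlety is handling the splitting of the wave front at step $np+1$, where the trajectories shadowing $h_n^+$ and $h_{n+r}^+$ diverge to different obstacles, so that $\mathcal{W}_{n,r}^+(np+1)$ is no longer a single curve; however, the uniform upper bound on the total $p$-length persists regardless of splitting and suffices for the exponential estimate.
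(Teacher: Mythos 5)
Your proof is correct and follows essentially the same route as the paper: the paper's own proof is only a two-line remark that the argument of Lemma~\ref{premier lemmmme} carries over because the forward (and backward) orbits of $x_n^\pm(0)$ and $x_{n+r}^\pm(0)$ share the same symbolic coding for $np$ steps, and your write-up is a faithful, more detailed execution of precisely that adaptation (wave front from the perpendicular bounces at $\tau^\pm$, uniform expansion in the $p$-metric, uniform bound on the terminal wave front, conversion back to Euclidean distance, and reduction of $\epsilon=-$ to $\epsilon=+$ via Lemma~\ref{pelfotk}). The only minor imprecision — that the codings "first diverge at position $np+1$" — is harmless, since if $\tau^-=\sigma_1$ they agree even longer and the estimate only improves.
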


\begin{proof}
The proof is similar to that of Lemma \ref{premier lemmmme}, by looking at the symbolic dynamics, and by the exponential growth of dispersing wave fronts, since the backward and forward orbits of $x_{n}^\pm(0)$ and $x_{n+r}^\pm(0)$ have the same symbolic codings for $np$ steps. For instance, we  have
\begin{align*}
(h_n^+)^\infty&=(\dots\tau^- \underbrace{\overline{\sigma\sigma\dots \sigma}}_\text{n\ \text{times}}\tau^+\underbrace{\sigma\sigma\dots \sigma}_\text{n\ \text{times}}\tau^-\dots),\\
(h_{n+r}^+)^\infty&=(\dots\tau^-\underbrace{\overline{\sigma\sigma\dots \sigma}}_\text{n+r\ \text{times}}\tau^+\underbrace{\sigma\sigma\dots \sigma}_\text{n+r\ \text{times}}\tau^-\dots).
\end{align*}
\end{proof}

In particular, for any $k\in \{0,\dots,\lceil \frac{np}{2}\rceil\}$, we have
\begin{equation*}
\|x_{n}^\pm(\epsilon k)-x_{\infty}^\pm(\epsilon  k)\|= O(\Lambda_\pm^{-\frac{np}{2}}),
\end{equation*}
hence $\lim_{n \to +\infty} x_{ n}^\pm(k)=x_{ \infty}^\pm(k)$. In other terms, the first $\lceil \frac{np}{2}\rceil$ backward and forward iterates of the point $x_n^\pm(0)$, i.e., the points $x_{ n}^\pm(\epsilon k)$, $k\in \{0,\dots,\lceil \frac{np}{2}\rceil\}$, shadow the associated points  of the  orbit $h_{\infty}^\pm$. \\

The point $x^\pm(1)$ is a saddle fixed point of $\mathcal{F}^p$, with eigenvalues $\lambda=\lambda(\sigma)<1$ and $\mu=\mu(\sigma):=\lambda^{-1}>1$.  We denote by
\begin{align*}
\mathrm{E}_{\mathcal{F}^p}^s(x^\pm(1))&:=\{v\in T_{x^\pm(1)} \mathcal{M}: D_{x^\pm(1)} \mathcal{F}^p\cdot v=\lambda v\},\\
\mathrm{E}_{\mathcal{F}^p}^u(x^\pm(1))&:=\{v\in T_{x^\pm(1)} \mathcal{M}: D_{x^\pm(1)} \mathcal{F}^p\cdot v=\lambda^{-1} v\},
\end{align*}
the stable, resp. unstable space of $\mathcal{F}^p$ at $x^\pm(1)$.

Thus, as in Section \ref{section linerasition}, for any $\varepsilon>0$,
there exist a neighborhood $\mathcal{U}^\pm$ of $x^\pm(1)$,  a neighborhood $\mathcal{V}^\pm$ of $(0,0)$, and a $C^{1,\frac 12}$-diffeomorphism $\Psi^\pm\colon \mathcal{U}^\pm \to \mathcal{V}^\pm$, such that
\begin{equation*}\label{fseccc conjuugg deuxiee}
\Psi^\pm \circ \mathcal{F}^p \circ (\Psi^\pm)^{-1} = D_\lambda,\qquad \|\Psi^\pm-L^\pm\|_{C^1} \leq \varepsilon, \qquad \|(\Psi^\pm)^{-1}-(L^\pm)^{-1}\|_{C^1} \leq \varepsilon,
\end{equation*}
for some linear isomorphism $L^\pm=L^\pm(\mathcal{F}) \in \mathrm{SL}(2,\R)$, and
\begin{equation}\label{contole non deuxieeeeee}
(\Psi^\pm)^{-1}(z)-(\Psi^\pm)^{-1}(z')=(L^\pm)^{-1}(z-z')+O(\max(|z|^{\frac 12},|z'|^{\frac 12})|z-z'|),
\end{equation}
where $D_\lambda:=\mathrm{diag}(\lambda,\lambda^{-1})$.

\begin{lemma}\label{coro sympt hoc deuxieeeee}
The orbits $h_{\infty}^\pm$ are homoclinic  to the periodic orbit $\sigma$. Moreover, for $j \in \{1,\dots,p\}$, there exist  $p$ vectors $v^{\pm, s}_{1}=(C_{1,s}^{\pm,s},C_{1,\varphi}^{\pm,s}),\dots,v^{\pm, s}_{p}=(C_{p,s}^{\pm,s},C_{p,\varphi}^{\pm,s})\in \R^2$, with $v^{\pm, s}_{1}=(L^\pm)^{-1} (1,0) \in \mathrm{E}_{\mathcal{F}^{p}}^s(x^\pm(1))$, $\|v^{\pm, s}_{1}\|=1$,  $v^{\pm, s}_{2}:=D_{x^\pm(1)} \mathcal{F} \cdot v^{\pm, s}_{1} \in \mathrm{E}_{\mathcal{F}^{p}}^s(x^\pm(2)), \dots, v^{\pm, s}_{p}:=D_{x^\pm(p-1)} \mathcal{F} \cdot v^{\pm, s}_{p-1} \in \mathrm{E}_{\mathcal{F}^{p}}^s(x^\pm(p))$, such that  for each $ j\in \{1,\dots,p\}$, and $k \gg 1$, we have
\begin{equation*}
x_{\infty}^\pm( j+\epsilon kp)-x^\pm( j)=\lambda^k \xi_{\infty}^\pm\cdot v_{j}^{\pm,s}+O(\lambda^{\frac{3k}{2}}),
\end{equation*}
for $\epsilon \in \{+,-\}$, and for some nonzero real numbers $\xi_{\infty}^+ ,\xi_{\infty}^- \in \R^*$.
\end{lemma}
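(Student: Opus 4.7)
The plan is to mimic the proof of Corollary~\ref{coro sympt hoc} in this general setting, using the linearizing coordinates $\Psi^\pm$ near the saddle fixed points $x^\pm(1)$ of $\mathcal{F}^p$. I describe the argument for $h_\infty^+$; the case $h_\infty^-$ is identical, and the sign $\epsilon = -$ in the index $j + \epsilon k p$ is handled via the palindromic identity $x_\infty^\pm(-m) = x_\infty^\pm(m)$ from Lemma~\ref{pelfotk}, which reduces negative shifts to positive ones (possibly after relabeling $j$).

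First I check that $h_\infty^+$ is homoclinic to $\sigma$. For fixed $j \in \{1,\dots,p\}$ and $k$ large, the forward iterates of $x_\infty^+(j+kp)$ share the symbolic coding $\sigma_j \sigma_{j+1}\cdots$ with those of $x^+(j)$, since $j+kp$ lies in the ``$\sigma^\infty$'' tail of the word $h_\infty^+$. Applying the dispersing-wave-front argument of Lemma~\ref{homcolincc}---interpolate the two points on $\Gamma_{\sigma_j}$ by a monotone curve with zero reflection angle, observe that its $\mathcal{F}^{kp}$-image is still a dispersing wave front of $p$-length uniformly bounded by the perimeter of the obstacles, yet expanding by at least $\Lambda^{kp}$ for some $\Lambda > 1$ via \eqref{expansion first}---yields $\|x_\infty^+(j+kp) - x^+(j)\| = O(\lambda^k)$, so $x_\infty^+(j) \in W^s_{\mathrm{loc}}(x^+(j))$.

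For the sharper estimate, fix $k_0$ large enough that $x_\infty^+(1 + kp) \in \mathcal{U}^+$ for all $k \geq k_0$. Since this sequence lies on the local stable manifold of $x^+(1)$, its $\Psi^+$-image lies on the stable eigenline of $D_\lambda$, that is on $\{\eta = 0\}$; the conjugacy $\Psi^+ \circ \mathcal{F}^p = D_\lambda \circ \Psi^+$ then gives
\[
\Psi^+(x_\infty^+(1 + kp)) = (\lambda^k \xi_\infty^+, 0), \qquad k \geq k_0,
\]
where $\xi_\infty^+$ is defined as $\lambda^{-k_0}$ times the first coordinate of $\Psi^+(x_\infty^+(1 + k_0 p))$. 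This value is nonzero because $x_\infty^+(1)$ and $x^+(1)$ have distinct past codings, hence are distinct points. Inverting via the $C^{1,\frac{1}{2}}$ estimate \eqref{contole non deuxieeeeee} with $z = (\lambda^k \xi_\infty^+, 0)$ and $z' = (0,0)$ then yields
\[
x_\infty^+(1 + kp) - x^+(1) = \lambda^k \xi_\infty^+ \cdot (L^+)^{-1}(1,0) + O(\lambda^{3k/2}),
\]
which is the base case $j=1$ with $v_1^{+,s} := (L^+)^{-1}(1,0)$. By construction this is a unit vector lying in $\mathrm{E}_{\mathcal{F}^p}^s(x^+(1))$, since $(L^+)^{-1}$ maps the stable eigenline of $D_\lambda$ to that of $D_{x^+(1)}\mathcal{F}^p$.

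To propagate to $j \in \{2,\dots,p\}$, apply $\mathcal{F}^{j-1}$ to both sides and Taylor expand using the $C^2$ regularity of $\mathcal{F}$ along the orbit:
\[
x_\infty^+(j + kp) - x^+(j) = D_{x^+(1)}\mathcal{F}^{j-1} \cdot \bigl( x_\infty^+(1 + kp) - x^+(1) \bigr) + O(\lambda^{2k}),
\]
and the quadratic remainder is absorbed in $O(\lambda^{3k/2})$. Setting inductively $v_j^{+,s} := D_{x^+(j-1)}\mathcal{F} \cdot v_{j-1}^{+,s} = D_{x^+(1)}\mathcal{F}^{j-1} \cdot v_1^{+,s}$ proves the statement, and $v_j^{+,s} \in \mathrm{E}_{\mathcal{F}^p}^s(x^+(j))$ follows from the $\mathcal{F}$-invariance of the stable bundle. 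I expect the main delicate point to be controlling the two sources of error---the $C^{1,\frac{1}{2}}$ H\"older remainder from the linearization (responsible for the $O(\lambda^{3k/2})$ rate) and the quadratic error $O(\lambda^{2k})$ from the iterates of $\mathcal{F}$---and showing that only the former survives in the final bound; this is exactly why the rate $\lambda^{3k/2}$ appears in the statement.
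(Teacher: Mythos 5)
Your proposal is correct and follows essentially the same route as the paper's proof: homoclinicity via the dispersing wave-front/expansivity argument of Lemma~\ref{homcolincc}, the $j=1$ estimate via the linearization $\Psi^\pm$ and the H\"older control \eqref{contole non deuxieeeeee} (which is the source of the $O(\lambda^{3k/2})$ error), propagation to $j\geq 2$ by applying $D\mathcal{F}$ with a quadratic remainder absorbed into $O(\lambda^{3k/2})$, and the reduction of $\epsilon=-$ to $\epsilon=+$ by the palindromic symmetry of Lemma~\ref{pelfotk}. Your write-up is in fact somewhat more explicit than the paper's (e.g.\ on the definition and nonvanishing of $\xi_\infty^\pm$), but there is no substantive difference in method.
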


\begin{proof}
The proof of the above lemma is similar to that of Corollary \ref{coro sympt hoc}, and follows from the estimates associated to the conjugacy $\Psi^\pm$. Let us compare the respective symbolic codings of $h_{ \infty}^\pm$ and $\sigma,\overline{\sigma}$:
\begin{equation*}
    \begin{array}{rrcl} h_{\infty}^+= %
      &\dots\overline{\sigma\sigma\sigma\sigma}&\tau^+&\sigma\sigma\sigma\sigma\dots\\
      \sigma^\infty= %
      &\dots\sigma\sigma\sigma\sigma&\sigma_{p}&\sigma\sigma\sigma\sigma\dots\\
      h_{\infty}^-= %
      &\dots\ \sigma\sigma\sigma\sigma&\tau^-&\overline{\sigma\sigma\sigma\sigma}\dots\\
      \overline{\sigma}^\infty =
      &\dots\ \overline{\sigma\sigma\sigma\sigma}&\sigma_1 &\overline{\sigma\sigma\sigma\sigma}\dots
    \end{array}
  \end{equation*}
By the palindromic symmetries of $h_{\infty}^\pm$ (see Lemma \ref{pelfotk}), we can focus on the forward orbit of $h_{\infty}^{\pm}$. 
Let us start with the point $x_{\infty}^\pm(1)$. Since the  symbolic codings of the forward orbits of 
$x_{\infty}^\pm(1)$ and $x^\pm(1)$ coincide, and by the exponential growth of wave fronts, then as in Lemma \ref{homcolincc}, we can show that the point $x_{\infty}^\pm(1)$ is in the stable  manifold 
of $x^\pm(1)$ for the dynamics of $\mathcal{F}^p$. Therefore, the sequence of points $(x_\infty^\pm(1+kp))_{k \geq 0}$ converges to the saddle fixed point $x^\pm(1)$ in the future or in the past, and we can use the linearization $\Psi^\pm$, in particular \eqref{contole non deuxieeeeee}, to show the above estimates for $j=1$.

To get the estimates for other indices $j \in \{1,\dots,p\}$, we just have to apply the map $\mathcal{F}$. For instance, we have
\begin{align*}
x_{\infty}^\pm(2+kp)-x^\pm(2)&=\mathcal{F}(x_{\infty}^\pm(1+kp))-\mathcal{F}(x^\pm(1))\\
&=D_{x^\pm(1)}\mathcal{F}\cdot (x_{\infty}^\pm(1+kp)-x^\pm(1))+O(\lambda^{2k})\\
&=\lambda^{k}\xi_{\infty}^\pm\, D_{x^\pm(1)}\mathcal{F}\cdot v_1^{+,s}+O(\lambda^{\frac{3k}{2}}).
\end{align*}
\end{proof}

\begin{remark}
Note that now, we have two different constants $\xi_{\infty}^+,\xi_{\infty}^-$, and not just one as before, since we have two different homoclinic orbits $h_{\infty}^+,h_{\infty}^-$. Besides, the orbit $\sigma$ is no longer assumed to be palindromic, hence we cannot easily connect the respective futures of $x^+(1)$ and $x^-(1)$ (the future of $x^-(1)$ coincides with the past of $x^+(p)$) based on this symmetry as we did before, and $\xi_{\infty}^+,\xi_{\infty}^-$ are  \textit{a priori}  unrelated.
\end{remark}

\begin{lemma}\label{main crororor deuxieieieieie}
There exist  $p$ vectors $v^{\pm, u}_{1}=(C_{1,s}^{\pm,u},C_{1,\varphi}^{\pm,u}),\dots,v^{\pm, u}_{p}=(C_{p,s}^{\pm,u},C_{p,\varphi}^{\pm,u})\in \R^2$, with $v^{\pm, u}_{1}  \in \mathrm{E}_{\mathcal{F}^p}^u(x^\pm(1))$,  $v^{\pm, u}_{2}:=D_{x^\pm(1)} \mathcal{F} \cdot v^{\pm, u}_{1} \in \mathrm{E}_{\mathcal{F}^p}^u(x^\pm(2)),\dots, v^{\pm, u}_{p}:=D_{x^\pm(p-1)} \mathcal{F} \cdot v^{\pm, u}_{p-1} \in \mathrm{E}_{\mathcal{F}^p}^u(x^\pm(p))$, such that  for each $ j\in \{1,\dots,p\}$, and $k\in \{0,\dots,n\}$, we have
\begin{equation*}
x_{n}^\pm(j+\epsilon kp)-x_{\infty}^\pm(j+\epsilon kp)=\lambda^{n-k } \xi_{\infty}^\pm\cdot v_{j}^{\pm,s}+O(\lambda^{n-\frac{k}{2}}),
\end{equation*}
for $\epsilon \in \{+,-\}$.
\end{lemma}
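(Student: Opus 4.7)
To prove this lemma, I would follow the strategy of Corollary \ref{main crororor} from the period-two setting. Work in the linearized coordinates $\Psi^+$ near $x^+(1)$, extended along the separatrices as in Subsection \ref{section linerasition}, and set $(\xi_n^+, \eta_n^+) := \Psi^+(x_n^+(1))$. The conjugacy yields $\Psi^+(x_n^+(1+kp)) = (\lambda^k \xi_n^+, \lambda^{-k}\eta_n^+)$, and Lemma \ref{coro sympt hoc deuxieeeee} gives $\Psi^+(x_\infty^+(1+kp)) = (\lambda^k \xi_\infty^+, 0)$, since the homoclinic orbit $h_\infty^+$ lies on the local stable manifold of $x^+(1)$. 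Consequently,
\begin{equation*}
\Psi^+(x_n^+(1+kp)) - \Psi^+(x_\infty^+(1+kp)) = (\lambda^k(\xi_n^+ - \xi_\infty^+),\ \lambda^{-k}\eta_n^+),
\end{equation*}
and the task reduces to the analog of Lemma \ref{lemma eaxxu}: establishing asymptotics of the form $\eta_n^+ \sim c\, \lambda^{n+1}$ and $\xi_n^+ - \xi_\infty^+ = O(\lambda^{n+1})$, with errors of order $O(\lambda^{5n/4})$.

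The period-two argument crucially used that $\mathcal{I}$ fixes the saddle point $(0_1,0) = x^+(1)$, so that $\mathcal{I}^* := \Psi \circ \mathcal{I} \circ \Psi^{-1}$ is a self-map of a neighborhood of the origin. For a general period-$p$ orbit this fails: $\mathcal{I}(x^+(1)) = x^-(p)$ is a distinct fixed point of $\mathcal{F}^p$ on the time-reversed orbit $\overline{\sigma}$. To compensate, I would introduce an auxiliary linearization $\Psi^-_p$ of $\mathcal{F}^p$ at $x^-(p)$ and define the matching map $\phi := \Psi^-_p \circ \mathcal{I} \circ (\Psi^+)^{-1}$, a diffeomorphism between neighborhoods of the origin. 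From $\mathcal{I} \circ \mathcal{F}^p \circ \mathcal{I} = \mathcal{F}^{-p}$ we get $\phi \circ D_\lambda = D_\lambda^{-1} \circ \phi$, so by the argument of Lemma \ref{carac ldeux} the linear part $L_\phi$ of $\phi$ has the form $(\xi, \eta) \mapsto (\alpha^{-1}\eta, \alpha\xi)$ for some $\alpha \in \R^*$, with $\phi(z) = L_\phi z + O(|z|^{3/2})$.

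The palindromic identity $x_n^+(2np+1-lp) = \mathcal{I}(x_n^+(lp+1))$ (from $\mathcal{I}(x_n^+(0)) = x_n^+(0)$, $\mathcal{F} \circ \mathcal{I} = \mathcal{I} \circ \mathcal{F}^{-1}$, and the $(2np+2)$-periodicity of $h_n'$), combined with the conjugacies on both sides, then yields
\begin{equation*}
(\lambda^{-l}\widetilde\xi_n^+,\ \lambda^l \widetilde\eta_n^+) = \phi(\lambda^l \xi_n^+, \lambda^{-l}\eta_n^+) = (\alpha^{-1}\lambda^{-l}\eta_n^+,\ \alpha\lambda^l\xi_n^+) + O\bigl((\lambda^{-l}|\eta_n^+|+\lambda^l)^{3/2}\bigr),
\end{equation*}
where $(\widetilde\xi_n^+, \widetilde\eta_n^+) := \Psi^-_p(x_n^+(-1))$. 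The exact algebraic manipulation of Lemma \ref{lemma eaxxu} --- projecting appropriate components, multiplying by a suitable power of $\lambda$, and optimizing $l$ at order $n/2$ to balance the error at $O(\lambda^{5n/4})$ --- together with the limit $\Psi^-_p(x_\infty^+(-1)) = \phi(\xi_\infty^+, 0)$ (giving the leading-order value of $\widetilde\eta_n^+$ as $\alpha\xi_\infty^+$) then delivers the sought-after asymptotics $\eta_n^+ = \alpha\xi_\infty^+ \lambda^{n+1} + O(\lambda^{5n/4})$ and $\xi_n^+ - \xi_\infty^+ = O(\lambda^{n+1})$.

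Substituting these into the difference computation of the first paragraph, for $k \in \{0,\dots,n\}$ the $\eta$-component dominates and one finds
\begin{equation*}
\Psi^+(x_n^+(1+kp)) - \Psi^+(x_\infty^+(1+kp)) = \alpha\xi_\infty^+ \lambda^{n+1-k}(O(\lambda^{2k}),\ 1) + O(\lambda^{5n/4-k}).
\end{equation*}
Inverting $\Psi^+$ via the $C^1$ estimate analogous to \eqref{contole non deuxieeeeee} and setting $v_1^{+,u}$ as an appropriate scalar multiple of $(L^+)^{-1}(0,1) \in \mathrm{E}_{\mathcal{F}^p}^u(x^+(1))$ (where $L^+$ is the linear part of $\Psi^+$) yields the claimed estimate, absorbing the factors $\alpha$ and $\lambda$. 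The vectors $v_j^{+,u}$ and the estimates at positions $j+kp$ with $j\geq 2$ follow by applying $D\mathcal{F}^{j-1}$, the higher-order terms being absorbed in the error; the backward case $\epsilon = -$ and the $\pm=-$ case are handled symmetrically using $\Psi^-$ at $x^-(1)$ and $\Psi^+_p$ at $x^+(p) = \mathcal{I}(x^-(1))$. The principal obstacle is the introduction of the second linearization $\Psi^-_p$ and the bookkeeping of the matching map $\phi$, which replace the simpler single-linearization structure of the period-two case where $\mathcal{I}$ fixed the basepoint.
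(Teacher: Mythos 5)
Your two-chart setup is the right way to make sense of the paper's phrase ``the gluing map $\Psi\circ\mathcal{I}\circ\Psi^{-1}$'' when $\mathcal{I}$ no longer fixes $x^\pm(1)$, and the properties you derive for $\phi$ are correct. The gap is in the step where you claim that the single palindromic relation $D_\lambda^{-l}(\widetilde\xi_n^+,\widetilde\eta_n^+)=\phi\bigl(D_\lambda^{l}(\xi_n^+,\eta_n^+)\bigr)$ ``delivers'' $\eta_n^+=\alpha\xi_\infty^+\lambda^{n+1}+O(\lambda^{5n/4})$. Since $\phi\circ D_\lambda=D_\lambda^{-1}\circ\phi$ holds exactly (it is the conjugated form of $\mathcal{I}\circ\mathcal{F}^p=\mathcal{F}^{-p}\circ\mathcal{I}$), the relation for every $l$ is equivalent to its $l=0$ instance: it merely \emph{defines} $(\widetilde\xi_n^+,\widetilde\eta_n^+)=\phi(\xi_n^+,\eta_n^+)$, i.e.\ $\widetilde\xi_n^+\approx\alpha^{-1}\eta_n^+$ and $\widetilde\eta_n^+\approx\alpha\xi_n^+$, and no factor $\lambda^{n+1}$ can come out of it. Compare with Lemma \ref{lemma eaxxu}: there the identity reads $D_\lambda^{\,n+1-k}(\xi_n,\eta_n)=\mathcal{I}^*\bigl(D_\lambda^{k}(\xi_n,\eta_n)\bigr)$, with the \emph{same} unknown on both sides and exponent $n+1-k$ on the left; that exponent --- which is precisely what couples $\eta_n$ to $\lambda^{n+1}\xi_n$ --- arises because $\mathcal{I}(x_n(2k))=x_n(2n+2-2k)$ is reached from $x_n(0)$ by iterating forward around the whole period inside one chart, the second perpendicular bounce (index $n+1$) lying \emph{inside} the linearization neighborhood. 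For general $\sigma$ that second perpendicular bounce is the $\tau^-$ excursion at index $np+1$, far from $\sigma$, and it carries an independent matching condition which your argument never uses.

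Concretely, after your reduction you have four small unknowns ($\eta_n^+$, $\xi_n^+-\xi_\infty^+$, $\widetilde\xi_n^+$, $\widetilde\eta_n^+-\widetilde\eta_\infty^+$) and only the two scalar equations coming from the $\tau^+$ gluing; the system is under-determined and nothing yet forces $\eta_n^+\asymp\lambda^{n}$. The missing equation lives at the other end of the $\sigma^n$ block: the bounce at index $np+1$ is perpendicular ($\varphi_n^+(np+1)=0$ by Lemma \ref{pelfotk}), equivalently the points $x_n^+(np+1-m)$ shadow the second homoclinic orbit $h_\infty^-$. Transported back through the block this reads $\lambda^{-(n-l)}\eta_n^+=\lambda^{l}\zeta^-_\infty+o(\lambda^{l})$, where $\zeta^-_\infty$ is the unstable coordinate at which $h_\infty^-$ enters the $\Psi^+$ chart, and it is this relation that produces $\eta_n^+=\zeta^-_\infty\lambda^{n}+O(\lambda^{5n/4})$ (and, fed back into your equations, $\xi_n^+-\xi_\infty^+=O(\lambda^{n})$). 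Note in particular that the leading constant is an invariant of $h_\infty^-$, not of $h_\infty^+$ and the $\tau^+$ gluing alone; it can of course be renamed $\xi_\infty^+$ times a normalization of $v_1^{+,u}$, but it cannot be computed from the data you use, and without the $\tau^-$ matching the claimed asymptotics do not follow.
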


\begin{proof}
Again, as in Remark \ref{remarque geomee}, this follows essentially from the exponential growth of wave fronts and from the symbolic dynamics,  by noting that the first $n$ backward and forward iterates  under $\mathcal{F}^p$ of the point of index $1$ in the orbits $h_{ n}^\pm$ and $h_{ \infty}^\pm$ have the same  symbolic codings:
\begin{equation*}
\begin{array}{rccccll}
h_{\infty}^+= &\dots \sigma & \overline{\sigma\sigma}\dots\overline{\sigma\sigma}&\tau^+&\sigma\sigma\dots \sigma\sigma & \sigma \dots\\
h_{n}^+= &\dots \tau^- & \overline{\sigma\sigma}\dots\overline{\sigma\sigma}&\tau^+&\sigma\sigma\dots \sigma\sigma &\tau^-\dots\\
h_{n}^-= &\dots \tau^+ & \sigma\sigma\dots \sigma\sigma&\tau^-&\overline{\sigma\sigma}\dots\overline{\sigma\sigma} &\tau^+\dots\\
h_{\infty}^-= &\dots  \sigma & \sigma\sigma\dots \sigma\sigma&\tau^-
&\overline{\sigma\sigma}\dots\overline{\sigma\sigma} & \sigma \dots
\end{array}
\end{equation*}
In particular, the norm of the difference between the points $x_{n}^\pm(1+\epsilon kp)$ and $x_{\infty}^\pm(1+\epsilon kp)$ is very small when $k$ is small, and increases with $k$. Moreover, for $k$ sufficiently large, the points are in a neighborhood of the periodic orbit $\sigma$, and then, the expansion is at a rate close to the largest eigenvalue $\mu=\lambda^{-1}>1$ of $D_{x^\pm(1)}\mathcal{F}^p$.

More formally, we argue as in the proof  of Corollary \ref{main crororor}.  Indeed, we have an analogue of Lemma \ref{lemma eaxxu}, noting that the orbits $h_{ n}^\pm$ and $h_{ \infty}^\pm$ are palindromic with respect to the point of index $0$ (see Lemma \ref{pelfotk}). The fact that we can take $\epsilon \in \{+,-\}$ in the above estimates also follows from this symmetry; in the following, we assume that $\epsilon=+$.  In particular, by a similar argument as in Lemma \ref{lemma eaxxu}, it is possible to obtain precise estimates on the parameters after the change of coordinates given by $\Psi^\pm$, due to the palindromic symmetry. Indeed, by Lemma \ref{premier lemmmme deux}, we know that for $n$ large, and for $k\in \{0,\dots,n\}$, the points $x_{n}^\pm(1+ kp)$ are in a neighborhood of the separatrix (they are close to $x_{\infty}^\pm(1+  kp)$), where we can linearize the dynamics thanks to the conjugacy map $\Psi^\pm$. Then, due to the palindromic symmetry, we have two ways to connect the points $\Psi^\pm(x_{ n}^\pm(1+  kp))$ and $\Psi^\pm(x_{n}^\pm(1- kp))$, by iterating the map $D_\lambda$ (the points $x_{ n}^\pm(1+  kp)$ and $x_{n}^\pm(1- kp)$  are in the same orbit of $\mathcal{F}^p$), or by the gluing map $\Psi \circ \mathcal{I} \circ \Psi^{-1}$, where $\mathcal{I}\colon (s,\varphi) \mapsto (s,-\varphi)$. Since these two ways have to match, we obtain an analogue of \eqref{rel bizar} for the coordinates of $\Psi(x_{n}^\pm(1+ kp))$. We also have $\Psi(x_{\infty}^\pm(1+  kp))=\xi_{\infty}^\pm (\lambda^k,0)+ O(\lambda^{\frac{3k}{2}})$, and then, by \eqref{contole non deuxieeeeee}, we get the  estimates of Lemma \ref{main crororor deuxieieieieie} for $j=1$.

To show the result for other indices $j \in \{1,\dots,p\}$, as in the proof of Lemma \ref{coro sympt hoc deuxieeeee}, we just have to apply the dynamics. Indeed,  for $j=1$, the inital error is close to a  small vector in the unstable space of $\mathcal{F}^p$ at $x^\pm(1)$, hence its  image by the differential of $\mathcal{F}$ is close to a vector in the unstable space at $x^\pm(2)$, etc.
\end{proof}

\subsection{Marked Lyapunov Spectrum}

This last part is dedicated to the proof of Theorem \ref{main prop per}: given a periodic orbit of period $p \geq 1$ encoded by a word $\sigma \in \{1,2,3\}^p$, and $\tau=(\tau^-,\tau^+) \in \{1,2,3\}^2$ as above, we apply the  results of the previous part to show that the lengths of the periodic orbits $(h_{n}'(\sigma,\tau))_{n \geq 0}$ can be combined in order to recover the Lyapunov exponent of the periodic orbit $\sigma$. The strategy is similar to the proof given in Subsection \ref{section estimmmm}, and we refer to this part for more details. As we have seen there, two different cases occur: as in Proposition \ref{main proprorop}, we have two (\textit{a priori} different) constants $C_{\mathrm{even}},C_{\mathrm{odd}}>0$ in the estimates, depending on the parity of the number $n\geq 0$ of repetitions of $\sigma$ in the words $h_{n}^\pm=h_n^\pm(\sigma,\tau)$.

Recall that for $n \geq 0$, the period of $h_{n}'=h_n'(\sigma,\tau)$ is equal to $2np+2$.
In the following, we assume that the period $p$ of $\sigma$ is even, i.e., $p=2q$, for some integer $q \geq 1$, and we restrict ourselves to the case of words $h_n^\pm$, for $n$ even, i.e., $n=2m$, for some integer $m \geq 0$. In particular, the period of $h_n'$ is equal to $4nq+2=4mp+2$. 
Similar computations can also be done when $p$ is odd and/or $n$ is odd. In the following, we split the words associated to $h_n'$  into two symmetric intervals of $2nq+1=2mp+1$ points centered respectively at $\tau^+$ and $\tau^-$:
\begin{equation*}
h_n' \quad \leftrightarrow \quad (\underbrace{\underbrace{\overline{\sigma\sigma\dots \sigma}}_\text{m\ \text{times}}\tau^+ \underbrace{\sigma\sigma\dots \sigma}_\text{m\ \text{times}}}_{2nq+1=2mp+1}\vert\underbrace{\underbrace{\sigma\sigma\dots \sigma}_\text{m\ \text{times}}\tau^- \underbrace{\overline{\sigma\sigma\dots \sigma}}_\text{m\ \text{times}}}_{2nq+1=2mp+1}).
\end{equation*}
The two above words are symmetric with respect to $\tau^\pm$:  we use the coordinates $x_{n}^+$ introduced above for the first one, and the coordinates $x_n^-$ for the second one:
\begin{align*}
&\mathcal{L}(h_{n}')-2n\mathcal{L}(\sigma)= \sum_{k=-nq}^{nq} [h(s_{n}^+(k),s_{n}^+(k+1))-h(s^+(k),s^+(k+1))]\\
&+\sum_{k=-nq}^{nq}  [h(s_{n}^-(k),s_{n}^-(k+1))-h(s^-(k),s^-(k+1))]+2h(s^+(p),s^+(1))\\
&=2\sum_{k=0}^{nq-1} \Delta_n(k)+\Delta_n(mp)+2h(s^+(p),s^+(1)),
\end{align*}
where
\begin{align*}
\Delta_n(k):=&h(s_{n}^+(k),s_{n}^+(k+1))-h(s^+(k),s^+(k+1))\\
+&h(s_{n}^-(k),s_{n}^-(k+1))-h(s^-(k),s^-(k+1)).
\end{align*}
Recall that $n=2m$, with $m \geq 0$.
We deduce that
\begin{equation*}
\sum_{k=0}^{nq-1} \Delta_n(k)=\sum_{k=0}^{mp-1}\Delta_n(k)= \sum_{k=0}^{m-1}\sum_{j=1}^p \Delta_n(j+kp)+(\Delta_n(0)-\Delta_n(mp)),
\end{equation*}
and
\begin{equation*}
\mathcal{L}(h_{n}')-2n\mathcal{L}(\sigma)=2\sum_{k=0}^{m-1}\sum_{j=1}^p \Delta_n(j+kp)+2\Delta_n(0)-\Delta_n(mp)+2h(s^+(p),s^+(1))
\end{equation*}
Moreover, by  Lemma \ref{coro sympt hoc deuxieeeee} and Lemma \ref{main crororor deuxieieieieie},  for each $k\in \{0,\dots, m-1\}$, we have
$$
\sum_{j=1}^p \Delta_n(j+kp)=\sum_{j=1}^p \Delta_\infty(j+kp)+ O(\lambda^{n-k})=O(\lambda^{k}),
$$
with
\begin{align*}
\Delta_\infty(k):=&h(s_{\infty}^+(k),s_{\infty}^+(k+1))-h(s^+(k),s^+(k+1))\\
+&h(s_{\infty}^-(k),s_{\infty}^-(k+1))-h(s^-(k),s^-(k+1)),
\end{align*}
so that
\begin{align*}
&\lim_{m \to +\infty}2\sum_{k=0}^{m-1}\sum_{j=1}^p \Delta_n(j+kp)+2\Delta_n(0)-\Delta_n(mp)\\
&= 2\lim_{m \to +\infty}\sum_{k=0}^{m-1}\sum_{j=1}^p \Delta_n(j+kp)+2 \Delta_n(0)=2\sum_{k=0}^{+\infty} \sum_{j=1}^p  \Delta_\infty(j+kp)+2 \Delta_\infty(0).
\end{align*}
We define $\mathcal{L}^\infty=\mathcal{L}^\infty(\sigma,\tau)\in \R$ as
$$
\mathcal{L}^\infty:=2\sum_{k=0}^{+\infty} \sum_{j=1}^p  \Delta_\infty(j+kp)+2\Delta_\infty(0)+2h(s^+(p),s^+(1)).
$$
We deduce from what precedes that
\begin{align*}
\mathcal{L}(h_{n}')-2n \mathcal{L}(\sigma)-\mathcal{L}^\infty
&=2\sum_{k=0}^{m-1}\sum_{j=1}^p \Delta_n'(j+kp)+2\Delta_n'(0)-\Delta_n'(mp)\\
&-2\sum_{k=m}^{+\infty}\sum_{j=1}^p \Delta_\infty(j+kp),
\end{align*}
with
\begin{align*}
\Delta_n'(k):=\Delta_n(k)-\Delta_\infty(k)=&h(s_{n}^+(k),s_{n}^+(k+1))-h(s_{\infty}^+(k),s_{\infty}^+(k+1))\\
+&h(s_{n}^-(k),s_{n}^-(k+1))-h(s_{\infty}^-(k),s_{\infty}^-(k+1)).
\end{align*}

Then, as in the proof of Proposition \ref{main proprorop}, we add up  the expansions of $\Delta_n'(k)$ and $\Delta_\infty(k)$ given by Lemma \ref{lemma utile}. Again, first order terms vanish, due to the periodicity of the orbits $h_{n}'$, $n \geq 0$. Thus, we only need to consider second order terms. The contribution of the term $\Delta_{n}'(0)$ can be neglected, since by Lemma \ref{main crororor deuxieieieieie}, it is of order at most  $O(\lambda^{2n})$, which is in the error term of the expansions that follow.
By Lemma \ref{lemma utile}, we thus get
\begin{equation}\label{dernier extime}
\mathcal{L}(h_{n}')-2n \mathcal{L}(\sigma)-\mathcal{L}^\infty=-(\Sigma_{n}^{1,+} +\Sigma_{n}^{1,-})-\Delta_n'(mp)-(\Sigma_{n}^{2,+} +\Sigma_{n}^{2,-})+O(\lambda^{2n}),
\end{equation}
where
\begin{align*}
\Sigma_{n}^{1,\pm}&:=\sum_{k=1}^{mp}
h(s_{n}^\pm(k),s_{n}^\pm(k+1))\left(\zeta^-(x_{n}^\pm(k))\left(1+\zeta^-(x_{n}^\pm(k))\right)\cdot(\varphi_{n}^\pm(k)-\varphi_{\infty}^\pm(k))^2\right.\\
&\left.+\zeta^+(x_{n}^\pm(k)) \left(1+\zeta^+(x_{n}^\pm(k))\right)\cdot(\varphi_{n}^\pm(k+1)-\varphi_{\infty}^\pm(k+1))^2+ 2 \zeta^-(x_{n}^\pm(k))\right.\\
&\left.\cdot\, \zeta^+(x_{n}^\pm(k))\cdot(\varphi_{n}^\pm(k)-\varphi_{\infty}^\pm(k))(\varphi_{n}^\pm(k+1)-\varphi_{\infty}^\pm(k+1))\right)+O(|s_{n}^\pm(k)-s_{\infty}^\pm(k)|^3),
\end{align*}
and
\begin{align*}
\Sigma_{n}^{2,\pm}&:=\sum_{k=mp+1}^{+\infty}
h(s^\pm(k),s^\pm(k+1))\left(\zeta^-(x^\pm(k))\left(1+\zeta^-(x^\pm(k))\right)\cdot(\varphi^\pm(k)-\varphi_{\infty}^\pm(k))^2\right.\\
&\left.+\zeta^+(x^\pm(k)) \left(1+\zeta^+(x^\pm(k))\right)\cdot(\varphi^\pm(k+1)-\varphi_{\infty}^\pm(k+1))^2+ 2 \zeta^-(x^\pm(k))\right.\\
&\left.\cdot\, \zeta^+(x^\pm(k))\cdot(\varphi^\pm(k)-\varphi_{\infty}^\pm(k))(\varphi^\pm(k+1)-\varphi_{\infty}^\pm(k+1))\right)+O(|s^\pm(k)-s_{\infty}^\pm(k)|^3).
\end{align*}

By Lemma  \ref{main crororor deuxieieieieie} (resp. Lemma \ref{coro sympt hoc deuxieeeee}), the terms $(\varphi_{n}^\pm(k)-\varphi_{\infty}^\pm(k))^2$ (resp. $(\varphi^\pm(k)-\varphi_{\infty}^\pm(k))^2$) in the above expression of $\Sigma_n^{1,\pm}$ (resp. $\Sigma_n^{2,\pm}$) are maximal close to (resp. far from) the periodic orbit encoded by $\sigma$, i.e., for large indices $1 \leq k \leq mp$ (resp. small indices $k \geq mp+1$). Indeed, for each $j \in \{1,\dots,p\}$, each $k \in \{0,\dots,m\}$ and each $k' \in \{m,\dots,+\infty\}$, we have
\begin{align*}
(\varphi_{n}^\pm(j+kp)-\varphi_{\infty}^\pm(j+kp))^2&=(\xi_{\infty}^\pm C_{j,\varphi}^{\pm,u})^2 \lambda^{2(n-k)}+O(\lambda^{3m}),\\
 (\varphi^\pm(j+k'p)-\varphi_{\infty}^\pm(j+k'p))^2&=(\xi_{\infty}^\pm C_{j,\varphi}^{\pm,s})^2 \lambda^{2k'}+O(\lambda^{3m}).
\end{align*}
Another discrepancy comes from the fact that we replace $h(s_{n}^\pm(k),s_{n}^\pm(k+1))$ with $h(s^\pm(k),s^\pm(k+1))$ while estimating $\Sigma_n^{1,\pm}$, but it is in the error term.
Then, by Lemma \ref{coro sympt hoc deuxieeeee} and  Lemma \ref{main crororor deuxieieieieie}, the first term of the sum $\Sigma_{n}^{1,\pm}$ becomes
\begin{align*}
&\sum_{k=1}^{mp}
h(s_{n}^\pm(k),s_{n}^\pm(k+1))\left(\zeta^-(x_{n}^\pm(k))\left(1+\zeta^-(x_{n}^\pm(k))\right)\cdot(\varphi_{n}^\pm(k)-\varphi_{\infty}^\pm(k))^2\right)\\
&=\sum_{j=1}^p\sum_{k=0}^{m-1}
h(s_{n}^\pm(j+kp),s_{n}^\pm(j+kp+1))\cdot \zeta^-(x_{n}^\pm(j+kp))\\
&\cdot\left(1+\zeta^-(x_{n}^\pm(j+kp))\right)\cdot(\varphi_{n}^\pm(j+kp)-\varphi_{\infty}^\pm(j+kp))^2\\
&=\sum_{j=1}^p
h(s^\pm(j),s^\pm(j+1))\cdot \zeta^-(x^\pm(j))\\
&\cdot \big(1+\zeta^-(x^\pm(j))\big)\sum_{k=0}^{m-1}(\varphi_{n}^\pm(j+kp)-\varphi_{\infty}^\pm(j+kp))^2 +O(\lambda^{3m})\\
&=\frac{(\xi_\infty^\pm)^2\lambda^2}{1-\lambda^2}\left( \sum_{j=1}^p
\ell^\pm(j)\cdot \zeta^-(x^\pm(j))\big(1+\zeta^-(x^\pm(j))\big)\cdot(C_{j,\varphi}^{\pm,u})^2\right)  \lambda^n+O(\lambda^{\frac{3n}{2}}).
\end{align*}
We argue similarly for the other two terms in the expressions of $\Sigma_n^{1,\pm}$,  $\Sigma_n^{2,\pm}$ given above, and for the additional term $\Delta_n'(mp)$ in \eqref{dernier extime}, based on the estimates given by Lemma \ref{coro sympt hoc deuxieeeee} and  Lemma \ref{main crororor deuxieieieieie}.

Therefore, as in Proposition  \ref{main proprorop} and in \eqref{expression difference e}, we conclude that for some constant $C_{\mathrm{even}}=C_{\mathrm{even}}(\sigma,\tau)>0$,  we have
\begin{equation*}
\mathcal{L}(h_{n}')-2n \mathcal{L}(\sigma)-\mathcal{L}^\infty=-C_{\mathrm{even}}\cdot \lambda^{n}+O(\lambda^{\frac{3n}{2}}),
\end{equation*}
for each even integer $n=2m$, $m \geq 0$.

In the same way, we can get an analogous estimate in the case of odd integers $n \geq 0$, for some  constant $C_{\mathrm{odd}}>0$ which is \textit{a priori} different from $C_{\mathrm{even}}$. Besides, the case where the length $p$ of the word $\sigma$ is odd is handled similarly.

In particular, from the previous estimate, we see that it is possible to deduce from the Marked Length Spectrum the value of the Lyapunov exponent of the orbit labelled by $\sigma$, according to the formula:
$$
\mathrm{LE}(\sigma)=-\frac{1}{p}\log(\lambda)=-\frac{1}{p}\lim_{n \to+\infty} \frac{1}{n} \log\left( -\mathcal{L}(h_{n}')+2n\mathcal{L}(\sigma)+\mathcal{L}^\infty\right).
$$

\begin{remark}
The reason why we obtain two different constants $C_{\mathrm{even}}$ and  $C_{\mathrm{odd}}$  can be explained as follows. As we have seen for instance for $\Sigma_n^{1,\pm}$, the above estimates are obtained by considering geometric sums whose summands are maximal close to the periodic orbit $\sigma$. By expansiveness of the dynamics, this is achieved for points associated with symbols marked with an arrow in the following symbolic expressions of  $h_n^+=h_n^+(\sigma,\tau)$.  In particular, those symbols are  either ``in the middle'' or ``at the beginning and the end'' of the word $\sigma$, depending on the parity of $n \geq 0$, in such a way that for odd integers $n$, we have an extra term to take care of in the estimates:
\begin{align*}
h_{2m}^+&=(\tau^+\overbrace{\sigma\dots \sigma}^\text{m\ \text{times}}\underset{\substack{\uparrow}}{}  \overbrace{\sigma\dots \sigma}^\text{m\ \text{times}}\tau^-\overbrace{\overline{\sigma\dots \sigma}}^\text{m\ \text{times}}\underset{\substack{\uparrow}}{}  \overbrace{\overline{\sigma\dots \sigma}}^\text{m\ \text{times}}),&n=2m\ \text{even},\\
h_{2m+1}^+&=(\tau^+\overbrace{\sigma\dots \sigma}^\text{m\ \text{times}}\underset{\substack{\uparrow}}{\sigma}  \overbrace{\sigma\dots \sigma}^\text{m\ \text{times}}\tau^-\overbrace{\overline{\sigma\dots \sigma}}^\text{m\ \text{times}}\underset{\substack{\uparrow}}{\sigma}  \overbrace{\overline{\sigma\dots \sigma}}^\text{m\ \text{times}}),&n=2m+1\ \text{odd}.
\end{align*}
\end{remark}


\section*{Acknowledgments}
The authors wish to thank the hospitality of the ETH Institute for Theoretical Studies Z\"urich 
and the support of Dr. Max Rössler, the Walter Haefner Foundation and the ETH Zurich Foundation,
as well as the Banff International Research Station -- where part of this work was carried over.  
The authors are also indebted to the anonymous referees, to L.  Stoyanov and M. Zworski 
for their most useful comments and suggestions. M.L. is grateful to L. Backes, A. Brown, 
S. Crovisier, F. Rodriguez-Hertz, D. Obata, A. Wilkinson and D. Xu for useful
conversations during visits at the Pennsylvania State University, the
University of Chicago, and the Universit\'e Paris-Sud.

\end{document}